\newcommand{\xRightarrow}[2][]{\ext@arrow 0359\Rightarrowfill@{#1}{#2}}
\newtheorem{thm}{Theorem}[section]
\newtheorem{defi}{Definition}[section]
\newtheorem{rem}{Remark}[section]
\newtheorem{prop}{Proposition}[section]
\newtheorem{lem}{Lemma}[section]
\newtheorem{coro}{Corollary}[section]
\newtheorem{hyp}{Assumption}[section]
\newtheorem{conj}{Conjecture}
\newenvironment{acknowledgments}
{\noindent\itshape\small\textbf{Acknowledgments:}}%
{}
\definecolor{ao}{rgb}{0.0, 0.5, 0.0}
\author[1]{Dovgal Sergey}
\author[1]{Offret Yoann}
\affil[1]{Institut de Mathématiques de Bourgogne (IMB) - UMR 5584, CNRS\\
	Université de Bourgogne, F-21000 Dijon, France}
\title{\Large \bf Maximal entropy random walks and central Markov chains\\
{\large A paradigm for  growth models and combinatorial identities}}
\begin{document}
	
	\date{}
	\maketitle
		
	\noindent
	\rule{\linewidth}{2pt}

	\noindent
	{\bf Abstract.} We introduce and develop the concept of Maximal Entropy Random Walks (MERWs) on Weighted Bratteli Diagrams (WBDs), maximizing entropy production along paths as a natural criterion for choosing random walks on networks. Initially defined for irreducible finite graphs, MERWs were recently extended to the infinite setting in  \cite{Duboux}. 	Bratteli Diagrams model various growth processes, such as the Young Lattice, where the Plancherel growth process emerges as a MERW. We show that MERWs are special cases of central Markov chains, which, in general, provide a powerful framework for deriving combinatorial identities. Regarding growing trees, in particular, we retrieve and extend Han’s hook-length formula for binary trees and demonstrate that the Binary Search Tree (BST) process is a MERW, recovering its asymptotic behavior. We also introduce preferential attachment to generalize BSTs.  For comb models, significant central measures appear, including the Chinese restaurant process, providing an alternative proof of the Poisson-Dirichlet limit distribution. Finally, we propose a Monte Carlo method, based on Knuth’s algorithm, to approximate MERWs. We apply it to a pyramidal growth model, drawing connections with the limit shape of Young diagrams under the Plancherel measure.

	\noindent
	\rule{\linewidth}{2pt}
	
	\vspace{10pt}
	
	\noindent
	{\small \textbf{Key words}\; Maximal Entropy Random Walk .  Central Markov Chain . Bratteli Diagram .  Martin Boundary Theory . Stochastic Growth Model . Growing Tree . Combinatorics . Young Tableaux . Limit Shape . Monte-Carlo Method }
	
	\vspace{10pt}
		
	\noindent
	{\small\textbf{Mathematics Subject Classification (2000)}\; 05A. 05C. 60B . 60C . 60F . 60J}
	
	\noindent
\rule{\linewidth}{2pt}

\tableofcontents

	\vspace{10pt}

	\section{Introduction}

A commonly employed method to randomly explore a locally finite graph $G$, without relying on additional information, is to assume that a walker at any given node transitions uniformly at random to one of its neighboring nodes at each time step, independently of the past. 

This Markov process is referred to as a Generic Random Walk (GRW). Among all possible random walks, this choice maximizes entropy production at each step. More generally, for a Markov chain $(X_n)_{n\geq 0}$ on  $G$, it is natural to examine the asymptotic behavior of the entropy production of the first $n$-marginals
\begin{equation}
H((X_0, \cdots, X_n)) = -\sum_{x_0, \cdots, x_{n-1} \in G} p(x_0, \cdots, x_{n-1}) \ln(p(x_0, \cdots, x_{n-1})).
\end{equation}
Here, $p(x_0, \cdots, x_{n-1})$ denotes the probability distribution of $(X_0, \cdots, X_{n-1})$, which can be expressed as $\mu_0(x_0) p(x_0, x_1) \cdots p(x_{n-2}, x_{n-1})$, where $\mu_0(x_0)$ is the probability distribution of $X_0$, and $p(x, y)$ is the Markov kernel of the random walk. 

For an irreducible and positive recurrent Markov chain, this entropy production grows linearly, with a rate determined solely by $p$ (through its invariant probability $\pi$), given by

\begin{equation}\label{rate}
h(p) = \lim_{n \to \infty} \frac{H((X_0, \dots, X_{n-1}))}{n} = -\sum_{x, y \in G} \pi(x) p(x, y) \ln(p(x, y)).
\end{equation}
This quantity quantifies the entropy production per step under the stationary distribution.

Maximum Entropy Random Walks (MERWs) represent a paradigm shift from a local to a global perspective. These walks are designed to maximize entropy along their paths or, equivalently, the entropy rate (\ref{rate}). This approach was recently introduced in \cite{ref17,ref18,ref19} for finite irreducible graphs. Notably, the authors highlight the strong localization phenomenon exhibited by MERWs in slightly disordered environments. This characteristic has profound relevance in Quantum Mechanics, particularly in the context of the Anderson localization phenomenon (see \cite{konig2} for a mathematical overview).  The concept of MERWs is also intimately connected to Parry measures for subshifts of finite type, originally defined in \cite{Parry}. In addition, the case of infinite irreducible graphs has been investigated in \cite{Duboux}, revealing several phenomena that do not appear in the finite setting. As a matter of fact, since the right-hand side of (\ref{rate}) does not make sense when the Markov kernel is not positive recurrent, the appropriate definition of such walks remains somewhat unclear. Let us recall some definitions and properties of these walks.

\subsection{Irreducible framework}

\noindent
{\textit{i) The classical finite setting.}} When $G$ is an irreducible finite graph, the Perron-Frobenius theorem ensures the existence and uniqueness (up to a positive constant) of a positive right (resp. left) eigenvector $\psi$ (resp. $\phi$) of the $0/1$-adjacency matrix $A$ of the graph, associated with the spectral radius $\rho$. It can be easily shown that there is a unique random walk $(X_n)_{n\geq 0}$ on $G$ that maximizes the rate of entropy (\ref{rate}). 

This walk, called the Maximum Entropy Random Walk (MERW) on $G$, has a Markov kernel and an invariant probability measure given by
\begin{equation}\label{MERWexp}
p(x, y) = A(x, y) \frac{\psi(y)}{\rho \, \psi(x)} \quad \text{and} \quad \pi(x) = \phi(x) \psi(x).
\end{equation}
The eigenvectors $\psi$ and $\phi$ are normalized so that $\pi$ is a probability measure. 

Besides, one can easily show that the corresponding entropy rate is $h_{\rm MERW}=\ln(\rho)$. Interestingly, one can note that all trajectories of length $n$ between vertices $x$ and $y$ are equally probable, as shown by
\begin{equation}\label{unifipath}
\mathbb P(X_0=x,\cdots,X_n=y| X_0=x,X_n=y)=\frac{\psi(y)}{\rho^n\psi(x)}.    
\end{equation}
We shall prove (see Proposition \ref{uniform}) that this property characterizes MERWs on $R$-positive graphs (see below for a definition). 

To extend the scope, the adjacency matrix $A$ can be replaced with a weighted variant (strictly positive on edges), and the MERW can be chosen to maximize 
\begin{equation}\label{rate0}
h(p) = -\sum_{x, y \in G} \pi(x)p(x, y) \ln \left(\frac{p(x, y)}{A(x, y)}\right),
\end{equation}
over  positive-recurrent Markov kernels $p$ on $G$. When the entries $A(x,y)$ are non-negative integers, this formulation can be interpreted as a MERW on a multi-edge graph. Additional constraints, such as energy conditions, can be introduced as discussed in \cite{dixit}.

  Although there are only a limited number of solvable models where the spectral radius and the associated wave function are explicitly known, determining these in general remains a challenging task. For specific examples, such as (truncated) Cayley trees and ladder graphs, refer to \cite{Ochab}. For smaller graphs, it is feasible to compute these values numerically and conduct computer simulations of the MERW.\\

\noindent
{\textit{ii) The infinite setting.}}  The proper definition of a MERW on an infinite irreducible weighted graph $G$ has been addressed in \cite{Duboux}. This is primarily achieved using the theory of non-negative infinite matrices, as presented in \cite{VJI,VJII}. 

In this context, existence and uniqueness are no longer guaranteed. There are mainly two cases: the $R$-recurrent (resp. the $R$-transient) situation, characterized by
\begin{equation}\label{Rrec}
\sum_{n\geq 0} \frac{A^{n}(x,y)}{\rho^n}=\infty\quad  (\text{resp.} <\infty),
\end{equation}
for some (or equivalently all) $x,y$ in $G$. Here, $\rho$ denotes the inverse of the radius of convergence $R$ of the corresponding power series (which turns out not to depend on $x,y$). It is referred to as the combinatorial spectral radius in \cite{Duboux}, as it depends on the asymptotic weighted number of paths of length $n$ in the graph. 

Roughly speaking, a MERW on $G$ is still defined by the Markov kernel on the left-hand side of (\ref{MERWexp}), where $\psi$ is a positive eigenfunction associated with the weighted and infinite adjacency matrix $A$. In addition, it is elucidated in \cite{Duboux} how these random walks optimize the entropy rate.

The case where the graph is $R$-positive, meaning it is $R$-recurrent and $A^n(x, y)\rho^{-n}$ does not converge to zero, is very similar to the finite setting. In this scenario, there exists a unique MERW maximizing (\ref{rate0}) over positive-recurrent kernels, and the maximum is equal to $\ln(\rho)$. 

When the graph is $R$-recurrent but $A^n(x,y)\rho^{-n}$ tends to zero, existence and uniqueness are still maintained, but $\ln(\rho)$ is no longer a maximum of (\ref{rate0}), only a supremum. 

Besides, in the $R$-recurrent situation, the unique MERW is well approximated by considering any nested exhaustive sequence of irreducible finite subgraphs $\bigcup_{n\geq 0}\uparrow G_n = G $ and the corresponding sequence of classical  MERWs. 
 
The $R$-transient situation is much more complex. There exists a necessary and sufficient criterion for existence (see Theorem 2.1 in \cite{Duboux}), which is quite difficult to handle when the weighted graph is not locally finite, and uniqueness is no longer guaranteed.

 As a matter of fact, given a base point $o$ in $G$, and $\lambda \geq \rho$, the set
\begin{equation}
\mathcal C=\{\psi : G \rightarrow (0,\infty) : A\psi =\lambda \psi\text{ with } \psi(o)=1\},
\end{equation}
is a convex set whose extremal points can be described by the Martin boundary theory. Here again, $\ln(\rho)$ is the supremum of (\ref{rate0}) over positive-recurrent kernels. 

This explains why the case where $\rho$ in (\ref{MERWexp}) is replaced by $\lambda > \rho$ is not considered, even though the corresponding random walks maximize the pathwise entropy conditionally on their length and endpoints, as (\ref{unifipath}) highlights in the unweighted case. 

However, finite approximations of transient MERWs appear to be more enigmatic. An example in Section 2.3 of \cite{Duboux} is provided, where all finite approximations lead to a quantized subset of all the transient MERWs as they were previously defined, raising questions about the appropriate  definition of MERWs in the irreducible infinite framework.

\subsection{Beyond the irreducible framework : Bratteli Diagrams.}

 Among the wide variety of such networks, one notable example is Directed Acyclic Graphs (DAGs), with rooted trees being a particularly simple case. Trees have the advantage of possessing rigid hierarchical levels. Between these two models, we have chosen to investigate the case of Bratteli Diagrams (BDs). We refer to Definition \ref{defbratteli}  or Figure \ref{figbd} for an example.  

\begin{figure}[H]
	\centering
	\includegraphics[scale=.8]{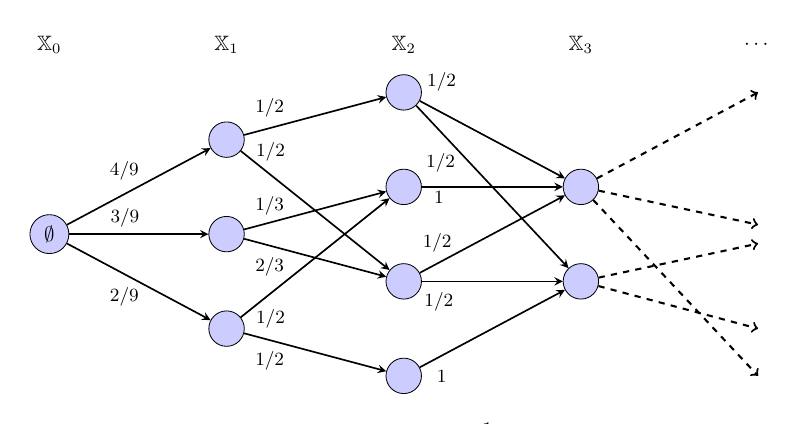}
	\caption{An example of Brattelli Diagram with some transition probabilities}
	\label{figbd}
\end{figure}

These diagrams are associated with rich algebraic, combinatorial, and probabilistic structures. A gentle introduction can be found in \cite{kerov}, and the Young or Pascal lattice can be mentioned as famous examples of BDs. Moreover, upon closer inspection, these graphs encode a large class of growth models.\\

\noindent
{\textit{i) About the definition}.} To begin with, the proper definition of what constitutes a MERW on a non-irreducible graph is not entirely clear. 

We have chosen to define MERWs by truncation and approximation. The definition we provide in (\ref{def}) is, in some sense, consistent with the general one given in \cite{Duboux} (see Section \ref{section:connexion}), but it is not equivalent, particularly in the $R$-transient case. 

All trajectories of length $n$ between vertices $x$ and $y$ still have the same probability or, more generally, a probability proportional to their weight (see Proposition \ref{uniform} and Corollary \ref{coro}). 

However, this property no longer characterizes MERWs on BD as defined in this paper, but instead characterizes central Markov chains on WBDs, as defined in \cite{kerov}.  

Here, we slightly generalize the definition of a central Markov chain and the related results to gain flexibility, allowing them to not have full support. When restricted to the latter, they yield a classical central Markov chain. Let us briefly introduce this concept and the main result.\\

\noindent
{\textit{ii) Central Markov chains.}} In some sense, we shall see that we might refer to central Markov chains as Weak-MERWs. 

For instance, on the Pascal lattice, the Polya urn process with two colors is a central Markov chain but not a MERW (see Remark \ref{rem:polya}). 

Similar to (\ref{MERWexp}), central Markov chains can be characterized by the so-called Positive Harmonic Function (PHF) $\varphi$ satisfying
\begin{equation}
\varphi(x) = \sum_{x \nearrow y} w(x, y)\varphi(y),
\end{equation}
where $x \nearrow y$ denotes that $(x, y)$ is an edge of the BD, and $w(x, y) > 0$ is some weight.
 The corresponding central Markov kernel is then given by $p(x,y)=w(x,y)\varphi(y)/\varphi(x)$.

\begin{rem}
Caution, this does not imply that the combinatorial spectral radius $\rho$, if it exists, is equal to one here. In fact, a vertex $x$ of a BD encodes in itself the distance $n$ to the root, and thus, in a certain way, $\varphi$ can be seen as a space-time harmonic function. 

We refer to Section \ref{section:connexion} for more details and to the Appendix \ref{appendix} for (slightly adapted) classical results about central Markov chains.
\end{rem}

\noindent
{\textit{iii) A powerful combinatorial identity}.} Moreover, since all paths starting from the root and ending at the same point have a probability proportional to their weight, this leads to the powerful and general combinatorial identity (see (\ref{combipower}) for a more precise statement):
\begin{equation}\label{combipower0}
\sum_{y \in \mathbb{X}_n} \#\{\varnothing \to y\} \varphi(y) = 1.
\end{equation}
Here, $\#\{\varnothing \to y\}$ denotes the weighted number of paths from the root $\varnothing$ of the Bratteli Diagram  to a vertex $y \in \mathbb{X}_n$ at a distance $n$ of the root. 

We shall see that this allows proving some combinatorial identities or reinterpreting them by unconventional means. 

For instance, the well-known binomial expansion and the Plancherel identity for the number of Young diagrams fall into this category. In particular, we give a proof of  Han's hook length formula  for binary trees \cite{Han1} using this method and we generalize it in  Corollaries \ref{Han-d} and \ref{comb4}. We also recover the well-known identity (\ref{stirling}) for Stirling numbers of the second kind.\\

\noindent
{\textit{iii) Tree growth process}.}
Most of these identities arise from the study of BDs associated with tree-growing models in Section \ref{section:tree} (see in particular  Theorem \ref{bratstructure}). 

We focus on rooted trees because they are equipped with a well-known hook-length formula for increasing labeling, similar to that for Young tableaux, which allows us to enumerate paths. 

Notably, we show that the Plancherel growth process is the unique MERW on the Young lattice (see Proposition \ref{plancherelgrowth}). 

We also describe central Markov chains and MERWs for various models of growing trees. In particular, we reinterpret the well-known Binary Search Tree (BST) process as a MERW (see Proposition \ref{BST}), allowing us to recover in an elegant way the well-known asymptotics of this process (Corollary \ref{coro:BST}).  

Some of these models can also be interpreted as aggregation processes. For instance, we show that the Chinese restaurant process is a central Markov chain (but not a MERW). Again, this approach enables us to recover, in a manner that appears both direct and simple, some well-known asymptotics of these processes (Corollary \ref{chinese}).\\

\noindent
{\textit{iv) Computer simulations}.}
However, in practice, it is impossible in most cases to explicitly compute central Markov chains or MERWs on a Bratteli Diagram. 

In Section \ref{sec:algo}, we propose a Monte Carlo method to estimate the asymptotic number of paths. This method is based on a Monte Carlo algorithm by Knuth \cite{knuth97}, which allows for estimating the number of leaves in a finite tree. 

We apply our algorithm to a growth model of pyramidal diagrams (a generalization of the Kreweras random walk), which can be interpreted as two-dimensional Young diagrams.\\

\noindent
{\bf Outline of the paper.} In Section \ref{sec:1}, we introduce MERWs on Weighted Bratteli Diagrams (WBDs) and characterize them using central Markov chains. We establish connections with previous works, investigate combinatorial identities and demonstrate that the Plancherel growth process is the unique MERW on the Young Lattice. 

Section \ref{section:tree} delves into prefix trees, leveraging a hook-length formula to compute path asymptotics and describe central measures. Ergodic measures, in particular, are linked to fragmentation processes on the infinite genealogical tree. Theorem \ref{bratstructure} characterizes all central Markov chains in the unweighted case.

In Sections \ref{sec:examples} and \ref{sec:combagreg}, we explore specific models (both weighted and unweighted) involving $d$-ary trees, infinite combs, and aggregation processes.
Notably, we extend Han's hook length formula for binary trees and compute the unique MERW of a generalized BST process with preferential attachment. The main result here is Theorem \ref{thm:BSTgen}. We connect comb models to the Chinese restaurant process, and analyze their Poisson-Dirichlet asymptotics. Also, we introduce constraints that lead to ties with the Young lattice and Kreweras random walks. 

Section \ref{sec:algo} presents a Monte Carlo algorithm, inspired by Knuth’s method, for approximating MERWs when direct computation of transition probabilities is impractical. We apply this algorithm to some pyramidal model and conjecture its limit shape, drawing parallels with the Plancherel growth process.

Finally, Appendix \ref{appendix} presents the main known results about (our slightly generalized version) of central Markov chains, ensuring readability and completeness.

\section{MERWs on WBDs}

\label{sec:1}

\setcounter{equation}{0}

We begin with an introduction to WBDs, modifying the usual framework detailed in \cite{kerov} to incorporate \textbf{countably infinite} level sets. Subsequently, we introduce random walks and  define MERWs on these lattices.

\subsection{WBDs}

\label{sec:WBD}

\begin{defi}\label{defbratteli}
	A graded graph $\mathbb{X}=\bigsqcup_{n\geq 0} \mathbb{X}_n$ is called a Bratteli Diagram (BD) if it satisfies the following properties.
	\begin{enumerate}
		\item[i)] For every edge $(x,y)$, we have $x \in \mathbb{X}_n$ and $y \in \mathbb{X}_{n+1}$ for some $n \geq 0$.
		\item[ii)] There exists a unique vertex $\varnothing$, called the root, with no incoming edges.
		\item[iii)] Every vertex has at least one outgoing edge.
		\item[iv)] Each level set $\mathbb{X}_n$ is finite or  {\textbf{countably infinite}}.
	\end{enumerate}     
\end{defi}

\noindent
{\textit{i) Notations.}}		
Given $x \in \mathbb{X}$, we denote by $n_x$ the rank $k$ of the level set $\mathbb{X}_k$ that contains $x$. We write $x \nearrow y$ to indicate that $(x,y)$ is an edge in the BD. A path $s$ in the BD is denoted by a sequence $s_0 \nearrow s_1 \nearrow \cdots$, or by concatenation $s_0 s_1 \cdots$. 

The set of all infinite paths starting from $\varnothing$ is denoted by $\mathcal{T}$, and the set of all finite paths of length $n$ starting from the root is denoted by $\mathcal{T}_n$. By convention, we set $\mathcal{T}_{-1} = \emptyset$. Also, we denote by $\{x \to y\}$ the set of all finite paths that start from $x$ and end at $y$. Note that each of them has the same length equal to $n_y-n_x$.\\

\noindent
{\textit{ii) Weighted structure and combinatorial dimension.}}
 To each edge $x \nearrow y$, one can assign a positive weight $w(x,y)$, and by extension, to every finite path $s = s_0 \nearrow \cdots \nearrow s_n$ of length $n$, one can assign the weight
\begin{equation}
w_s = \prod_{i=0}^{n-1} w(s_i,s_{i+1}).
\end{equation}

We refer to $(\mathbb{X}, w)$ as a Weighted Bratteli Diagram (WBD). In the absence of a specified weight function, we will implicitly assume that $w \equiv 1$. Obviously, one may extend $w$ to $\mathbb{X} \times \mathbb{X}$ by setting $w(x,y) = 0$ whenever $(x,y)$ is not an edge.

\begin{defi}\label{def:combidim}
 The combinatorial dimension between vertices $x$ and $y$ is defined as follows: for $x=y$, we set $d(x,y)=1$, and for $x \neq y$, we set
\begin{equation}
d(x,y) = \sum_{s \in \{x \to y\}} w_s.
\end{equation}	
\end{defi}

Note that $d(x,y) = \text{card}(\{x \to y\})$ when $x \neq y$ and $w \equiv 1$. To generalize, we extend $d$ by setting $d(x,F) = \sum_{y \in F} d(x,y)$ for any vertex $x \in \mathbb{X}$ and subset $F \subseteq \mathbb{X}$. The following hypothesis will be maintained throughout the paper.
\begin{hyp}\label{Ass2}
For every $n \in \mathbb{N}$, we have $\displaystyle d(\varnothing, \mathbb{X}_n) = \sum_{s \in \mathcal{T}_n} w_s < \infty.$
\end{hyp}

\subsection{Random Walks and Entropy maximization}

{\textit{i) Random walks setting.}} A random walk  $(X_n)_{n \geq 0}$ is defined by a Markov kernel $p(x,y)$, $x, y \in \mathbb{X}$, which is equal to zero when $(x,y)$ is not an edge of the graph.  

Let us introduce, for any $n\geq 0$ and  $s\in \mathcal{T}_n$, 
\begin{equation}\label{cylinder}
C_s = \{ t \in \mathcal{T} : \forall\, 0 \leq k \leq n,\, t_k = s_k \}.
\end{equation}
We shall denote by $\mathcal{A}$ the $\sigma$-algebra on $\mathcal{T}$ generated by all these cylinder sets. 

A random walk is characterized by its distribution $\mu$ starting from the root. This is a probability measure on the measurable space $(\mathcal{T}, \mathcal{A})$. Its marginal distribution on $\mathcal{T}_n$ is given for all $s \in \mathcal{T}_n$ by
\begin{equation}
\mu(C_s) = p(\varnothing, s_1) \cdots p(s_{n-1}, s_n).
\end{equation}
For a finite path $s = s_0 \nearrow \cdots \nearrow s_n$ of length $n$, we shall set $p(s) = p(s_0, s_1) \cdots p(s_{n-1}, s_n)$ in such a way that 
\begin{equation}
\mathbb P(X_1=s_1,\cdots,X_{n}=s_n|X_0=s_0)=p(s).
\end{equation}

We denote by $\mathcal{RW}$ the set of probability distributions on $(\mathcal{T}, \mathcal{A})$ which come from  a random walk and by $\mathcal{RW}_n$ their restrictions to $\mathcal{T}_n$. Note that one can embed $\mathcal{RW}_n \hookrightarrow \mathcal{RW}$.

\begin{defi}\label{supportdef}
	The support of a random walk $(X_n)_{n\geq 0}$, or equivalently the support of the corresponding probability measure $\mu\in\mathcal{RW}$, is defined by 
	\begin{equation}\label{support0}
	\mathbb{S} = \{x \in \mathbb{X} : \exists n\geq 0,\;\mathbb{P}_\varnothing(X_n = x) > 0\}.
	\end{equation}
\end{defi}

\noindent
{\textit{ii) Entropy maximization.}} For any non-negative integer $n$ and any probability distribution $\nu$ on $\mathcal{T}_n$, one can define 
\begin{equation}\label{weightentropy1}
H_w(\nu) = H(\nu) + \sum_{s \in \mathcal{T}_n} \ln(w_s) \nu_s,
\end{equation}
where $H(\nu) = -\sum_{s \in \mathcal{T}_n} \nu_s \ln(\nu_s)$ is the usual Shannon entropy of $\nu$. 

One can easily check that $H_w(\nu)$ can be expressed as
\begin{equation}\label{weightentropy2}
H_w(\nu) = \ln\left(\sum_{s \in \mathcal{T}_n} w_s \right) - D_{\rm KL}(\nu \| \mu_n),
\end{equation}
with
\begin{equation}\label{mun}
\mu_n(s) = \frac{w_s}{\sum_{t \in \mathcal{T}_n} w_t}\quad \text{and} \quad D_{\rm KL}(\nu \| \mu_n) = \sum_{s \in \mathcal{T}_n} \nu_s \ln\left( \frac{\nu_s}{\mu_n(s)} \right).
\end{equation}

Recall that $D_{\rm KL}$ is the Kullback–Leibler Divergence (KLD) or the relative entropy (see \cite{KLD}). It follows from Assumption \ref{Ass2} and the properties of the KLD that $\nu\longmapsto  H_w(\nu)$ is well-defined and achieves its maximum value, given by $\ln\left(\sum_{s \in \mathcal{T}_n} w_s\right)$, for $\nu = \mu_n$. 

\begin{rem}\label{mun0}
The distribution $\mu_n$ is the unique probability measure on $\mathcal T_n$ such that the probability of any path $s \in \mathcal T_n$ is proportional to its weight $w_s$. 

In particular, when $w \equiv 1$, it corresponds to the uniform probability distribution  and thus  the entropy is equal to $\ln(\# \mathcal T_n)$.
\end{rem}

The proof of the following lemma is straightforward. The key point is that $\mu_n$ represents the distribution of a random walk (a priori depending on $n$) restricted to $\mathcal{T}_n$.

\begin{lem}\label{lem}
	For any non-negative integer $n$, the distribution $\mu_n$ belongs to $\mathcal{RW}_n$. More precisely, the corresponding transition kernel is given for all $x \nearrow y$ by
	\begin{equation}\label{finiteMERW}
	p_n(x,y) = w(x,y) \frac{d(y, \mathbb{X}_n)}{d(x, \mathbb{X}_n)} = \frac{w(x,y) d(y, \mathbb{X}_n)}{\sum_{x \nearrow y'} w(x, y') d(y', \mathbb{X}_n)}.
	\end{equation}

	Furthermore, for all $s \in \mathcal{T}_n$, one can write 
	\begin{equation}\label{phin}
	p_n(s) = w_s \frac{\varphi_n(y)}{\varphi_n(x)}, \quad \text{with} \quad \varphi_n(z) = \frac{d(z, \mathbb{X}_n)}{d(\varnothing, \mathbb{X}_n)}.
	\end{equation}

	Additionally, one has $\varphi_n(\varnothing) = 1$ and for all $x$ with $|x| < n$,
	\begin{equation}\label{harmonicn}
	\varphi_n(x) = \sum_{x \nearrow y} w(x, y) \varphi_n(y).
	\end{equation}
\end{lem}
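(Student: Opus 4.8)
The plan is to verify each displayed equation in order, since they are essentially all restatements of the definition of $\mu_n$ in terms of the combinatorial dimension $d(\cdot,\mathbb{X}_n)$. First I would observe that $\mu_n$, being a probability distribution on the finite set $\mathcal{T}_n$, trivially has some restriction; the content is that $\mu_n$ is \emph{consistent} in the sense of coming from a Markov kernel, i.e. that the conditional law of $s_{k+1}$ given $s_0\nearrow\cdots\nearrow s_k$ under $\mu_n$ depends only on $s_k$. To see this, fix $x=s_k\in\mathbb{X}_k$ with $k<n$. Using the multiplicativity $w_s=\prod_{i} w(s_i,s_{i+1})$ and the decomposition of a path of length $n$ through $x$ into its initial segment $\varnothing\to x$ and a terminal segment $x\to y$ with $y\in\mathbb{X}_n$, one gets
\begin{equation*}
\mu_n\bigl(C_{s_0\cdots s_{k+1}}\bigr)\;=\;\frac{\sum_{s:\,s_k=x,\,s_{k+1}=y}w_s}{\sum_{t\in\mathcal{T}_n}w_t}\;=\;\frac{d(\varnothing,x)\,w(x,y)\,d(y,\mathbb{X}_n)}{d(\varnothing,\mathbb{X}_n)},
\end{equation*}
so dividing by $\mu_n(C_{s_0\cdots s_k})=d(\varnothing,x)\,d(x,\mathbb{X}_n)/d(\varnothing,\mathbb{X}_n)$ yields the ratio $w(x,y)\,d(y,\mathbb{X}_n)/d(x,\mathbb{X}_n)$, which depends on $s$ only through $x$. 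This is exactly $p_n(x,y)$ in \eqref{finiteMERW}, and it proves both that $\mu_n\in\mathcal{RW}_n$ and the first display.

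Next I would check that $p_n$ is genuinely a stochastic kernel on edges emanating from $x$ (for $n_x<n$): this is the identity $d(x,\mathbb{X}_n)=\sum_{x\nearrow y'}w(x,y')\,d(y',\mathbb{X}_n)$, which is simply the recursive decomposition of the weighted path count — every path from $x$ to level $n$ takes a first edge $x\nearrow y'$ and then a path from $y'$ to level $n$ — combined with $d(x,x)=1$ handling the degenerate case; this also gives the second equality in \eqref{finiteMERW} and, incidentally, \eqref{harmonicn} after dividing through by $d(\varnothing,\mathbb{X}_n)$. For \eqref{phin}, I would telescope: $p_n(s)=\prod_{i=0}^{n-1}p_n(s_i,s_{i+1})=\prod_i w(s_i,s_{i+1})\,\dfrac{d(s_{i+1},\mathbb{X}_n)}{d(s_i,\mathbb{X}_n)}=w_s\,\dfrac{d(s_n,\mathbb{X}_n)}{d(s_0,\mathbb{X}_n)}$, and then normalize numerator and denominator by $d(\varnothing,\mathbb{X}_n)$ to write this as $w_s\,\varphi_n(s_n)/\varphi_n(s_0)$ with $\varphi_n(z)=d(z,\mathbb{X}_n)/d(\varnothing,\mathbb{X}_n)$; here for $s\in\mathcal{T}_n$ one has $s_0=\varnothing$, $s_n=y$, matching the statement. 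Finally $\varphi_n(\varnothing)=d(\varnothing,\mathbb{X}_n)/d(\varnothing,\mathbb{X}_n)=1$ is immediate, and \eqref{harmonicn} is the normalized form of the stochasticity identity just used.

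There is essentially no hard obstacle; the only point requiring a touch of care is the bookkeeping around the convention $d(x,x)=1$ when a vertex $x\in\mathbb{X}_k$ with $k$ close to $n$, and making sure the path-decomposition identities are stated cleanly (in particular that $d(y,\mathbb{X}_n)$ is finite, which follows from Assumption \ref{Ass2} applied along a path from $\varnothing$ to $y$, so all the manipulations above involve only finite quantities). If I wanted to be economical I would phrase the whole proof as: (i) prove the path-decomposition identity $d(x,\mathbb{X}_n)=\sum_{x\nearrow y}w(x,y)d(y,\mathbb{X}_n)$ for $n_x<n$; (ii) deduce that the right-hand side of \eqref{finiteMERW} defines a stochastic kernel; (iii) check by a direct telescoping computation that the random walk it generates has marginal $\mu_n$ on $\mathcal{T}_n$; (iv) read off \eqref{phin}, $\varphi_n(\varnothing)=1$ and \eqref{harmonicn}. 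Everything else is routine.
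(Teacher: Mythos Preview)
Your proof is correct and matches the paper's approach---indeed the paper omits the proof entirely, calling it straightforward. One small slip: in your displayed formula for $\mu_n(C_{s_0\cdots s_{k+1}})$ the numerator should carry the weight $w_{s_0\cdots s_k}$ of the specific prefix rather than $d(\varnothing,x)$ (and likewise $\mu_n(C_{s_0\cdots s_k})=w_{s_0\cdots s_k}\,d(x,\mathbb{X}_n)/d(\varnothing,\mathbb{X}_n)$), since the cylinder fixes the whole initial segment and not just its endpoint; the ratio is unchanged, so your conclusion stands.
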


\begin{rem}
	To perform the random walk corresponding to $\mu_n$, we need to enumerate, at each point $x$, the weighted number of paths $d(x, \mathbb{X}_n)$ leading to the $n$th level set $\mathbb{X}_n$. Besides, note that the restriction of $\mu_n$ to $\mathcal{T}_k$ for $k < n$ is not equal to $\mu_k$ in general. 
	
	For instance, in Figure \ref{figbd}, the transition probabilities correspond to the unweighted case $w \equiv 1$ and $n=4$, but  even if each of the $9$ paths of length $3$ starting from the root has the same probability, equal to $1/9$, the distributions of the trajectories of lengths $k=1,2$ are not uniform.
\end{rem}

\subsection{A consistent definition} 

\label{sec:def}

Since $\mathbb{X}$ is countably infinite and all the sequences $(\varphi_n(x))_{n \geq 1}$ for $x \in \mathbb{X}$ are bounded by $1$, we get from Cantor's diagonal argument that there exists a subsequence $(n_k)_{k \geq 1}$ such that $\varphi_{n_k}$ converges pointwise to some $\varphi^\ast : \mathbb{X} \longrightarrow [0, \infty[$ as $k$ tends to infinity. 

Furthermore, by using Lemma \ref{lem} (specifically (\ref{harmonicn})) and applying the dominated convergence theorem (with the help of Assumption \ref{Ass2}), one can verify that $\varphi^\ast$ is a Non-Negative Harmonic Function (NNHF) as defined below.

\begin{defi}\label{defsat}
	A function $\varphi : \mathbb{X} \longrightarrow [0, \infty)$ is said to be a Non-Negative Harmonic Function (NNHF)  if $\varphi(\varnothing) = 1$ and, for all $x \in \mathbb{X}$,
	\begin{equation}\label{harmonicc}
	\varphi(x) = \sum_{x \nearrow y} w(x,y) \varphi(y).
	\end{equation} 
\end{defi}

 Equivalently, there exists a Markov kernel $p^\ast$ on $\mathbb X$ such that $p_{n_k}$ converge pointwise to $p^\ast$. Still equivalently,  the sequence of probability distribution  $\mu_{n_k} \in \mathcal{RW}_{n_k}$ converges in law to some $\mu^\ast \in \mathcal{RW}$, in the sense that for all cylinder sets $C_s \in \mathcal{A}$ with $s \in \mathcal{T}_n$ and $n \geq 0$, one has \begin{equation}\label{cvgdistrib} \lim_{k \to \infty} \mu_{n_k}(C_s) = \mu^\ast(C_s). \end{equation}
Note that (\ref{cvgdistrib}) does not depend on how we embed each $\mathcal{RW}_{n_k}$ into $\mathcal{RW}$. 

Moreover, these three characteristics $(\mu^\ast, p^\ast, \varphi^\ast)$ are related by the following formulas: for any path $s$ from the root to some $y \in \mathbb{X}$,
\begin{equation}\label{link}
\mu^\ast(C_s) = p^\ast(s) = w_s\, \varphi^\ast(y). 
\end{equation}
It suffices indeed to look at Lemma \ref{lem} again. This leads to the following definition.

\begin{defi}\label{def}
	A MERW on a WBD is any random walk associated with any limit point $\mu^\ast$ (equivalently  $p^\ast$ or $\varphi^\ast$) as defined above. 
\end{defi}

\begin{defi}\label{defbrattelisat}
	A Bratteli diagram $\mathbb{Y}$ is a Saturated Sub-Bratteli Diagram (SSBD) of $\mathbb{X}$ if 
	\begin{enumerate}
		\item[a)] $\mathbb{Y} \subset \mathbb{X}$ as a subgraph.
		\item[b)] For every $y \in \mathbb{Y}$ and $x \in \mathbb{X}$ such that $x \nearrow y$ in $\mathbb{X}$, one has $x \in \mathbb{Y}$.
	\end{enumerate}
\end{defi}

\begin{prop}\label{suppssbd}
The support $\mathbb X^\ast$ of a MERW,  corresponding to the NNHF $\varphi^\ast$, is a SSBD which  is given by
\begin{equation}\label{support}
\mathbb{X}^\ast =  \{x \in \mathbb{X} : \varphi^\ast(x) > 0\}.
\end{equation}
\end{prop}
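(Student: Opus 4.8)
\textbf{Proof strategy for Proposition \ref{suppssbd}.}

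The plan is to prove the two assertions separately: first, that the support $\mathbb{X}^\ast$ coincides with the set $\{x \in \mathbb{X} : \varphi^\ast(x) > 0\}$; and second, that this set is a SSBD, i.e. is closed under taking predecessors in $\mathbb{X}$.

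For the identification of the support, I would start from the formula (\ref{link}), which says that for any path $s$ from $\varnothing$ to $y$ one has $\mu^\ast(C_s) = w_s\, \varphi^\ast(y)$. Since all weights $w_s$ are strictly positive, this immediately gives that $\mu^\ast(C_s) > 0$ if and only if $\varphi^\ast(y) > 0$. Now, by Definition \ref{supportdef}, a vertex $x$ lies in $\mathbb{X}^\ast$ precisely when $\mathbb{P}_\varnothing(X_{n_x} = x) > 0$, and $\mathbb{P}_\varnothing(X_{n_x} = x) = \sum_{s \in \{\varnothing \to x\}} \mu^\ast(C_s)$. By Assumption \ref{Ass2} there is at least one path from $\varnothing$ to $x$ (indeed $d(\varnothing, x)$ contributes to the finite sum $d(\varnothing, \mathbb{X}_{n_x})$, and property iii) of Definition \ref{defbratteli} guarantees every vertex is reached), and every term in this finite sum has the same sign behaviour, being $w_s\,\varphi^\ast(x)$. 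Hence $\mathbb{P}_\varnothing(X_{n_x}=x) = d(\varnothing,x)\,\varphi^\ast(x)$, which is positive exactly when $\varphi^\ast(x) > 0$. This proves (\ref{support}).

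For the SSBD property, conditions a) is immediate since $\mathbb{X}^\ast \subseteq \mathbb{X}$ as a subgraph (edges being inherited). For condition b), suppose $y \in \mathbb{X}^\ast$, so $\varphi^\ast(y) > 0$, and let $x \in \mathbb{X}$ with $x \nearrow y$. Applying the harmonicity relation (\ref{harmonicc}) of Definition \ref{defsat} at the vertex $x$,
\begin{equation}
\varphi^\ast(x) = \sum_{x \nearrow y'} w(x,y')\,\varphi^\ast(y') \geq w(x,y)\,\varphi^\ast(y) > 0,
\end{equation}
using that all terms in the sum are non-negative and $w(x,y) > 0$. Hence $x \in \mathbb{X}^\ast$, establishing b). Finally, one should check that $\mathbb{X}^\ast$, equipped with the inherited grading and edges, is itself a genuine Bratteli diagram: property i) is inherited, the root $\varnothing$ belongs to $\mathbb{X}^\ast$ since $\varphi^\ast(\varnothing) = 1 > 0$ and it still has no incoming edges, and property iv) is inherited. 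Property iii) — every vertex of $\mathbb{X}^\ast$ has an outgoing edge within $\mathbb{X}^\ast$ — follows from harmonicity again: if $x \in \mathbb{X}^\ast$ then $\varphi^\ast(x) > 0$ forces at least one $y$ with $x \nearrow y$ and $\varphi^\ast(y) > 0$, i.e. $y \in \mathbb{X}^\ast$.

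The only subtle point, and the one I would be most careful about, is the backward-closedness argument in condition b): it relies essentially on the fact that $\varphi^\ast$ satisfies the \emph{exact} harmonic identity (\ref{harmonicc}) — established just before Definition \ref{def} via dominated convergence from (\ref{harmonicn}) — rather than merely a superharmonic inequality, together with strict positivity of edge weights. Everything else is a direct unwinding of the definitions and the key formula (\ref{link}).
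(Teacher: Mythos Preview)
Your proof is correct and follows essentially the same approach as the paper's own proof: the identification (\ref{support}) via formula (\ref{link}), and the backward-closedness (condition b)) via the harmonic inequality $\varphi^\ast(x)\geq w(x,y)\varphi^\ast(y)>0$. You are in fact more thorough than the paper, which only sketches these two points and does not explicitly verify that $\mathbb{X}^\ast$ satisfies the Bratteli axioms i)--iv); your check of property iii) for $\mathbb{X}^\ast$ via harmonicity is a nice addition. One minor slip: when arguing that every vertex $x$ is reachable from $\varnothing$, you invoke property iii) of Definition \ref{defbratteli}, but that concerns \emph{outgoing} edges; the correct justification is property ii) (uniqueness of $\varnothing$ as a vertex with no incoming edges), which forces every $x\neq\varnothing$ to have an incoming edge and hence, by induction on $n_x$, a path from $\varnothing$.
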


\begin{proof}
Note that (\ref{support}) comes easily from (\ref{link}). Let us prove that $\mathbb X^*$ is a SSBD. 

To this end, assume that $\varphi^\ast(y)>0$ and let $x\in\mathbb X$ such that $x\nearrow y$.  By using (\ref{harmonicc}) we obtain that $\varphi^\ast(x)\geq w(x,y)\varphi^\ast(y)>0$, which completes the proof. 
\end{proof}

\subsection{MERWs as particular central Markov chains.} 

\label{sec:central}

The subsequent results arise from the bijection between central measures and PHFs, as well as the construction of the associated Martin boundary detailed in \cite{kerov}. 

Since this article allows {\bf countably infinite} level sets,  we have concisely reviewed and adapted their results in Appendix \ref{appendix} for the sake of completeness. The notions of central measure and central Markov chain are replaced by the notions of saturated central measure and saturated Markov chain (see Definition \ref{defmerw}), and PHFs are replaced by NNHFs. 

Roughly speaking, the main difference is that we allow the common support of these objects to be a SSBD, not necessarily the whole BD.

To understand Corollary \ref{coro} below, we need to briefly recall some results and introduce some additional notations. 

First,  a central measure on a WBD $\mathbb{X}$ is a probability measure $\mu$ on $(\mathcal{T}, \mathcal{A})$ such that $w_s \mu(C_t) = w_t \mu(C_s)$ for all $y\in\mathbb X$ and $s, t \in\{\varnothing\to y\}$. 

As a consequence, it turns out that $\mu \in \mathcal{RW}$ and the corresponding random walk has full support. Besides, conditionally on any starting and ending points $x, y$, any trajectory $s$ from $x$ to $y$ has a probability proportional to its weight $w_s$ (equal to $1/\#\{x \to y\}$ when $w \equiv 1$). 

Furthermore, there are one-to-one correspondences between central measures and PHFs, as well as between ergodic central measures and the (combinatorial) Martin boundary. 

Here, we shall say that a path $t \in \mathcal{T}$ converges to $\zeta \in \partial \mathbb{X}$, a point of the Martin boundary, if and only if, for all $x \in \mathbb{X}$,
\begin{equation}\label{Martin}
K(x, \zeta) := \lim_{n \to \infty} \frac{d(x, t_n)}{d(\varnothing, t_n)} \quad \text{exists}.
\end{equation}

Moreover, we denote by $m_n$ the $n$th marginal of $\mu_n$, defined for any $y \in \mathbb{X}_n$ by
\begin{equation}\label{marginalunif}
m_n(y) = \sum_{s \in \{\varnothing \to y\}} \mu_n(s) = \frac{d(\varnothing, y)}{d(\varnothing, \mathbb{X}_n)}.
\end{equation}

\begin{coro}\label{coro}
	Let $(X_n)_{n \geq 0}$ be a MERW and let $\mu^\ast$, $\varphi^\ast$ and $\mathbb{X}^\ast$ be defined as in Section \ref{sec:def}. Then $\mu^\ast$ is a central measure on the SSBD $\mathbb{X}^\ast$ and $(X_n)_{n \geq 0}$ is the associated central Markov chain. In particular, for all $x, y \in \mathbb{X}^\ast$ (defined in (\ref{support})) and all $t \in \{x \to y\}$ of length $n$:
	\begin{equation}\label{Unif}
	\mathbb{P}(X_1 = t_1, \ldots, X_{n-1} = t_{n-1} \mid X_0 = x, X_n = y) = \frac{w_t}{\sum_{s \in \{x \to y\}} w_s}.
	\end{equation} 
	
	Furthermore, there is a bijective relationship between MERWs and the limit points $m^\ast$, in the space of probability measures on $\mathbb{X} \sqcup \partial \mathbb{X}$, of the sequence $(m_n)_{n \geq 0}$. Each MERW is characterized by a NNHF $\varphi^\ast$ such that 
	\begin{equation}\label{eq:representationint}
	\varphi^\ast(x) = \int_{\partial \mathbb{X}} K(x, \zeta) \, m^\ast(d\zeta).
	\end{equation}    
\end{coro}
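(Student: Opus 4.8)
\textbf{Proof plan for Corollary \ref{coro}.}

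The plan is to transfer everything through the characterization of $\mu^\ast$ given in (\ref{link}) and to invoke the structure results of Appendix \ref{appendix} on saturated central measures. First I would check the centrality of $\mu^\ast$ on $\mathbb{X}^\ast$: given $y\in\mathbb{X}^\ast$ and two paths $s,t\in\{\varnothing\to y\}$, formula (\ref{link}) gives $\mu^\ast(C_s)=w_s\varphi^\ast(y)$ and $\mu^\ast(C_t)=w_t\varphi^\ast(y)$, so $w_t\mu^\ast(C_s)=w_s w_t\varphi^\ast(y)=w_s\mu^\ast(C_t)$, which is exactly the defining relation of a central measure (here on the SSBD $\mathbb{X}^\ast$, whose SSBD property is Proposition \ref{suppssbd}). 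By the correspondence recalled just before the statement (and detailed in the Appendix), a central measure is automatically in $\mathcal{RW}$ and its associated Markov chain has the kernel $p^\ast(x,y)=w(x,y)\varphi^\ast(y)/\varphi^\ast(x)$; so $(X_n)_{n\ge0}$ is the associated central Markov chain. Identity (\ref{Unif}) is then the restriction of the general "proportional to weight conditionally on endpoints" property of central measures to $x,y\in\mathbb{X}^\ast$: telescoping $\mu^\ast(C_t)=w_t\varphi^\ast(y)$ along a path $t\in\{x\to y\}$ and dividing by $\mathbb{P}(X_0=x,X_n=y)=\mu^\ast(\{t':t'_0=x,t'_n=y\})$, the factors $\varphi^\ast$ cancel and one is left with $w_t/\sum_{s\in\{x\to y\}}w_s$; one should note that $\mathbb{P}(X_n=y)>0$ precisely because $y\in\mathbb{X}^\ast$, so the conditioning is legitimate.

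For the second half, the key observation is that the data of a MERW is, by Definition \ref{def}, exactly the data of a subsequential limit of the $p_{n_k}$, equivalently of $\varphi_{n_k}=d(\cdot,\mathbb{X}_{n})/d(\varnothing,\mathbb{X}_{n})$, equivalently of $\mu_{n_k}$. I would argue that the map $\varphi^\ast\mapsto m^\ast$ (restricting a limit point of marginals) and its inverse are well-defined and mutually inverse. In one direction: if $\varphi_{n_k}\to\varphi^\ast$ pointwise, then by compactness of the space of probability measures on the compact space $\mathbb{X}\sqcup\partial\mathbb{X}$ (the Martin compactification) one can refine to a further subsequence along which $m_{n_k}\to m^\ast$; the Poisson-Martin integral representation $\varphi^\ast(x)=\int_{\partial\mathbb{X}}K(x,\zeta)\,m^\ast(d\zeta)$ then follows by passing to the limit in $\varphi_{n_k}(x)=\sum_{y\in\mathbb{X}_{n_k}}m_{n_k}(y)\,d(x,y)/d(\varnothing,y)$ — note $d(x,y)/d(\varnothing,y)\to K(x,\zeta)$ as $y\to\zeta$ by (\ref{Martin}), and one checks the limiting measure gives no mass to the "finite" part $\mathbb{X}$ because each vertex of $\mathbb{X}_n$ has $m_{n_k}$-mass $0$ for $n_k>n$, so all mass escapes to $\partial\mathbb{X}$. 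Conversely, a limit point $m^\ast$ of $(m_n)$ produces via (\ref{eq:representationint}) an NNHF, hence a MERW, and these operations are inverse to each other because a central measure (equivalently its NNHF) is determined by, and determines, the limiting boundary measure — this is the bijection between central measures and PHFs together with the Martin-boundary integral representation recalled in the Appendix.

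The main obstacle is the bijectivity claim: one must ensure that the passage $\varphi^\ast\leftrightarrow m^\ast$ is not merely surjective in each direction but genuinely a bijection, i.e.\ that the refinement to a further subsequence does not lose information and that distinct MERWs yield distinct $m^\ast$ and vice versa. Injectivity of $\varphi^\ast\mapsto m^\ast$ amounts to uniqueness in the Martin representation (the representing measure of a harmonic function is unique because the Martin boundary is a minimal/Choquet boundary for this cone — part of the Appendix's content), and injectivity of $m^\ast\mapsto\varphi^\ast$ is immediate since $\varphi^\ast$ is recovered by integrating $K(x,\cdot)$. The only subtlety on the MERW side is that a priori a single $\mu^\ast$ might arise from different subsequences with different associated $m^\ast$; but since $\mu^\ast$ determines $\varphi^\ast$ via (\ref{link}) and $\varphi^\ast$ determines $m^\ast$ uniquely, this cannot happen, so the correspondence is well-defined on MERWs themselves and not just on subsequences. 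I would also remark that everything is consistent with Appendix \ref{appendix}, where the same statements are proved in the saturated generality, so in the write-up I would keep this proof short and defer the harmonic-analysis details (compactness of the Martin compactification, uniqueness of the representing measure) to the Appendix.
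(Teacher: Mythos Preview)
Your proposal is correct and follows essentially the same route as the paper: centrality of $\mu^\ast$ via (\ref{link}) and Proposition \ref{suppssbd}, then the integral representation by writing $\varphi_n(x)=\int K(x,y)\,m_n(dy)$ and passing to the limit using compactness of $\mathbb{X}\sqcup\partial\mathbb{X}$ and continuity of $y\mapsto d(x,y)/d(\varnothing,y)$. You are in fact more careful than the paper about the bijectivity claim, correctly isolating uniqueness of the representing measure (implicit in Theorem \ref{convcentral}, where $m$ is the law of the exit point $Z$) as the missing ingredient; the paper's own proof only exhibits the two directions without explicitly closing this loop.
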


\begin{proof}
These results are mainly direct consequences of Appendix \ref{appendix} and Section \ref{sec:def}. 

Regarding the last point, we recall that $\mathbb{X} \sqcup \partial \mathbb{X}$ is a compact metric space. Besides, one has 
\begin{equation}\label{eq:representationint_n}
\varphi_n(x)=\frac{d(x, \mathbb{X}_n)}{d(\varnothing, \mathbb{X}_n)} = \int_{\mathbb{X}_n} \frac{d(x, y)}{d(\varnothing, y)} m_n(dy),
\end{equation}

If $m^\ast$ is the limit point of $(m_{n_k})_{k\geq 0}$, it is supported on $\partial \mathbb X$ since $m_n(\mathbb X_n)=1$ for all $n\geq 0$. We get from \eqref{eq:representationint_n} we get that $(\varphi_{n_k})_{k\geq 0}$ converges to some $\varphi^\ast$ since $y\to \frac{d(x, y)}{d(\varnothing, y)}$ is bounded and continuous on $\mathbb X\sqcup \partial \mathbb X$ for all $x\in\mathbb X$. Finally,  we obtain \eqref{eq:representationint}.

Reciprocally, if $(\varphi_{n_k})_{k\geq 0}$ converges to some $\varphi^\ast$, then by compactness, there exist some subsequence $(l_k)_{k\geq 0}$ of $(n_{k})_{k\geq 0}$ and $m^\ast$ a probability measure on $\partial \mathbb X$ such that $(m_{l_k})_{k\geq 0}$ converges to $m^\ast$. Again, we obtain  \eqref{eq:representationint}.
	
\end{proof}

\begin{rem}
Equation (\ref{Unif}) means that, conditionally on their starting and ending points, the trajectories of a MERW on a WBD have probabilities proportional to their weights. We shall prove that the converse is true for MERWs on positive $R$-recurrent graphs (see Proposition \ref{uniform}).
\end{rem}

\subsection{Connection with  MERWs on irreducible graphs} \label{section:connexion}

In this section we explore the {relationship} between MERWs on BDs as defined above and MERWs on irreducible weighted graphs $G$, possibly infinite, as introduced in \cite{Duboux}. 

Again, denote by $A$ the corresponding weighted adjacency matrix and fix a base point $o \in G$. Define the level sets $(\mathbb{X}_n)_{n \geq 0}$ recursively as follows: $\mathbb{X}_0 = \{\varnothing\} = \{(0, o)\}$ and
\begin{equation}
\mathbb{X}_{n+1} := \{(n+1, y) : \exists x \in G,\, (n, x) \in \mathbb{X}_n \text{ and } A(x, y) > 0\}.
\end{equation}

We say that $(n, x) \nearrow (n+1, y)$ {if and only if} $A(x, y) > 0$ and we define the weight function by $w((n, x); (n+1, y)) = A(x, y)$. Note that Assumption \ref{Ass2} is satisfied if 
\begin{equation}
\sup_{x \in G} \sum_{y \in G} A(x, y) < \infty.
\end{equation}

Let $R$ denote the convergence parameter as introduced in \cite{VJI} and $\rho = R^{-1}$ the so-called combinatorial spectral radius. For all $0 < r \leq R$ and all positive $(1/r)$-harmonic functions $\psi$, when they exist, one can produce a PHF $\varphi$ on $(\mathbb{X}, w)$ by setting 
\begin{equation}\label{product}
\varphi(n, x) = r^n \psi(x).
\end{equation}

Conversely, a uniform aperiodicity criterion is given in \cite{KestenRatio, spacemartin} to ensure that a PHF $\varphi$ can be written {in the manner described above}. Note that any MERW on the WBD can be {calculated} by studying the limit points, as $N$ goes to infinity, of
\begin{equation}\label{limitpoint}
\varphi_N(n, x) = \frac{d((n, x); \mathbb{X}_N)}{d(\varnothing; \mathbb{X}_N)} = \frac{A^{N-n} \mathds{1}(x)}{A^N \mathds{1}(o)}.
\end{equation}

When $G$ is finite, the two definitions of MERWs coincide since in that case the spectral theorem applies and we have 
\begin{equation}
\lim_{N \to \infty} \varphi_N(n, x) = \frac{\psi(x)}{\rho^n},
\end{equation}
where $\psi$ is the unique positive solution of $A \psi = \rho \psi$ with $\psi(o) = 1$. More generally, we get from \cite[Theorem 7.2]{VJI} the following result.
\begin{prop}
	Assume that $A$ is $R$-positive. Then the definition of the MERW on $G$ given in \cite{Duboux} is equivalent to the definition of the MERW stated in this paper.
\end{prop}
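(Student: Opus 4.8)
The plan is to show that the MERW of \cite{Duboux} on an $R$-positive graph $G$, when transported to the WBD $\mathbb{X}$ via the space-time construction of Section \ref{section:connexion}, coincides with one of the limit points $\varphi^\ast$ from Definition \ref{def}, and moreover that there is only one such limit point so the two notions genuinely agree. First I would recall from \cite[Theorem 7.2]{VJI} what $R$-positivity gives us: the spectral radius $\rho=R^{-1}$ is attained, there exist (up to scalars) unique positive right and left $\rho$-eigenvectors $\psi,\phi$ with $\sum_x\phi(x)\psi(x)<\infty$, and one has the ratio limit theorem
\begin{equation}
\lim_{N\to\infty}\frac{A^N\mathds{1}(x)}{A^N\mathds{1}(o)}=\frac{\psi(x)}{\psi(o)},
\end{equation}
with $\psi$ normalized so that $\psi(o)=1$. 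More generally $\lim_{N\to\infty}A^{N-n}\mathds{1}(x)/A^N\mathds{1}(o)=\psi(x)/\rho^n$, because $A^{N-n}\mathds{1}(x)/A^{N-n}\mathds{1}(o)\to\psi(x)$ while $A^{N-n}\mathds{1}(o)/A^N\mathds{1}(o)\to\rho^{-n}$.

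Next I would plug this into \eqref{limitpoint}: the quantity whose limit points define the MERW on the WBD is exactly $\varphi_N(n,x)=A^{N-n}\mathds{1}(x)/A^N\mathds{1}(o)$, and by the previous paragraph this converges (the full sequence, not just a subsequence) to $\varphi^\ast(n,x)=\psi(x)/\rho^n$. Hence the set of limit points is the singleton $\{\varphi^\ast\}$, so Definition \ref{def} produces a unique MERW on the WBD, with NNHF $\varphi^\ast(n,x)=r^n\psi(x)$ for $r=R$ in the notation of \eqref{product}. I would then compute its Markov kernel from \eqref{link}: for an edge $(n,x)\nearrow(n+1,y)$,
\begin{equation}
p^\ast\bigl((n,x),(n+1,y)\bigr)=w\bigl((n,x);(n+1,y)\bigr)\frac{\varphi^\ast(n+1,y)}{\varphi^\ast(n,x)}=A(x,y)\frac{\psi(y)}{\rho\,\psi(x)},
\end{equation}
which is precisely the transition kernel \eqref{MERWexp} of the MERW on $G$ in the $R$-positive case as defined in \cite{Duboux}. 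Since a MERW on $G$ (as a random walk indexed by time) is the same object as a random walk on the space-time graph $\mathbb{X}$ started from $\varnothing=(0,o)$, and both are determined by the same kernel and the same initial point, the two walks are equal in law; conversely the uniqueness just established shows the WBD side contributes nothing extra. This gives the asserted equivalence.

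The main obstacle is the ratio-limit step: one must be careful that the hypotheses behind \cite[Theorem 7.2]{VJI} (irreducibility of $A$, $R$-positivity, and — for the stronger ratio limit with a shift $A^{N-n}$ rather than $A^N$ — a mild aperiodicity or the use of the Cesàro/period-averaged version) are exactly the ones in force here, and to handle the periodic case by the standard device of looking along residue classes of $N$ modulo the period, noting that the space-time structure makes the graph automatically "graded" so the relevant period is $1$. I would also double-check that Assumption \ref{Ass2} holds under $\sup_x\sum_y A(x,y)<\infty$ (so that all the sums and the dominated-convergence argument of Section \ref{sec:def} are legitimate) and that the normalization $\varphi^\ast(\varnothing)=1$ is automatic from dividing by $A^N\mathds{1}(o)$. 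Everything else — the identification of the kernel and the matching with \eqref{MERWexp} — is a routine substitution, and the fact that the limit point is unique is what upgrades "consistent with" (the caveat in the paragraph on consistency) to "equivalent to" in the $R$-positive regime.
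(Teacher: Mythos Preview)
Your proposal is correct and follows the same route as the paper: invoke \cite[Theorem 7.2]{VJI} to get the asymptotics of $A^N$ in the $R$-positive regime, plug into \eqref{limitpoint} to obtain $\varphi_N(n,x)\to\psi(x)/\rho^n$, and read off the kernel \eqref{MERWexp}. The paper in fact gives no standalone proof for this proposition---it merely states that the result follows from \cite[Theorem 7.2]{VJI}, after having worked out the finite case explicitly in the preceding paragraph; your write-up is a faithful and more detailed unpacking of that citation, including the periodicity caveat that the paper only spells out later in the proof of Proposition \ref{uniform}.
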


To go further, one can see that (\ref{Unif}) characterizes MERWs on  $R$-positive graphs. 

\begin{prop}\label{uniform}
	Assume that $A$ is $R$-positive. Then the unique MERW $(X_n)_{n \geq 0}$ is the unique random walk on $G$ such that, for all $n \geq 0$ and $x_0, \ldots, x_n \in G$,
	\begin{equation}\label{unifweight}
	\mathbb{P}(X_1 = x_1, \ldots, X_{n-1} = x_{n-1} \mid X_0 = x_0, X_n = x_n) = \frac{\prod_{i=0}^{n-1} A(x_i, x_{i+1})}{A^n(x_0, x_n)}.
	\end{equation}    
\end{prop}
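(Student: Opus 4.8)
The plan is to prove the two directions separately: first that the MERW satisfies \eqref{unifweight}, then that it is the \emph{unique} random walk doing so. The first direction is essentially immediate from Corollary \ref{coro}. Indeed, under the $R$-positivity assumption, the MERW on $G$ coincides with the one on the associated WBD $(\mathbb{X},w)$, and \eqref{Unif} specialized to the vertices $x = (0,x_0)$, $y = (n,x_n)$ of the diagram reads, after unwinding the identifications $w((k,a);(k+1,b)) = A(a,b)$ and $d((n,x_n),(0,x_0)$-type quantities $= A^n(x_0,x_n)$, exactly as \eqref{unifweight}. One should be slightly careful that the conditioning event $\{X_0 = x_0, X_n = x_n\}$ in $G$ pulls back to $\{X_0 = (0,x_0), X_n = (n,x_n)\}$ in $\mathbb{X}$, but since the level index is a deterministic function of time along any path, these events have the same probability, so the identity transfers verbatim. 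This step is routine.

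The substantive part is uniqueness: suppose $(X_n)_{n\ge0}$ is \emph{any} random walk on $G$ satisfying \eqref{unifweight} for all $n$ and all $x_0,\dots,x_n$; we must show its kernel is forced to be $p(x,y) = A(x,y)\psi(y)/(\rho\,\psi(x))$. First I would observe that \eqref{unifweight} says precisely that for a path $s$ from $x_0$ to $x_n$, $p(s)$ depends on the endpoints and on $s$ only through the edge-product $\prod A(s_i,s_{i+1}) = w_s$; writing $g_n(x_0,x_n) := p(s)/w_s$ (well-defined, independent of the path), summing over all paths of length $n$ from $x_0$ to $x_n$ gives $A^n(x_0,x_n) g_n(x_0,x_n) = \sum_s p(s) = \mathbb{P}(X_n = x_n \mid X_0 = x_0)$, so $g_n(x_0,x_n) = \mathbb{P}_{x_0}(X_n=x_n)/A^n(x_0,x_n)$. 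Next, the Chapman--Kolmogorov / path-decomposition consistency $p(s) = p(x_0,x_1)\,p(x_1\nearrow\cdots\nearrow x_n)$ forces a multiplicative relation $g_{m+n}(x,z) = g_1(x,y)\,g_n(y,z)$ whenever $A(x,y) > 0$ and there is a length-$n$ path $y\to z$ — in other words $g_1(x,y) = g_{m+n}(x,z)/g_n(y,z)$ is independent of $z$ and $n$. Setting $x_0 = o$ and using irreducibility, this rigidity collapses: define $\psi(y) := \lim$ or rather a normalized version of $\mathbb{P}_o(X_n = y)/d(\varnothing,(n,y))$; the consistency relations identify $(X_n)$ with the random walk built from a single NNHF, i.e. with some limit point $\varphi^\ast$ of the $\varphi_N$ in \eqref{limitpoint}.

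To finish I would invoke $R$-positivity decisively: by \cite[Theorem 7.2]{VJI}, in the $R$-positive case the limit $\lim_N \varphi_N(n,x) = \psi(x)/\rho^n$ exists and the positive $\rho$-eigenfunction $\psi$ of $A$ is unique up to scaling; hence there is only \emph{one} NNHF arising this way, so only one candidate kernel, namely \eqref{MERWexp}. Combined with the previous paragraph, which showed any random walk satisfying \eqref{unifweight} must be of this form, uniqueness follows, and that common walk is the MERW by the Proposition just above. The main obstacle I anticipate is the bookkeeping in the uniqueness argument: turning the ``$p(s)$ proportional to $w_s$'' hypothesis plus Markov consistency into the statement ``$p$ comes from an NNHF $\varphi$'' cleanly, in particular verifying that the $g_1(x,y)$ defined above genuinely has the product form $w(x,y)\varphi(y)/\varphi(x)$ with $\varphi$ harmonic — this requires using irreducibility to propagate the local relations across the whole graph and checking no degenerate (identically-vanishing-somewhere) solutions survive, which is exactly where $R$-positivity, as opposed to mere $R$-recurrence or $R$-transience, is needed.
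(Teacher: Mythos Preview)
Your uniqueness argument has a real gap. You correctly extract from \eqref{unifweight} the cocycle $g_n(x,z) = \mathbb{P}_x(X_n = z)/A^n(x,z)$ with $g_{1+n}(x,z) = g_1(x,y)\,g_n(y,z)$, and this does give an NNHF on the associated BD. But you then assert that this NNHF is ``some limit point $\varphi^\ast$ of the $\varphi_N$ in \eqref{limitpoint}'', and that is unjustified: being an NNHF (equivalently, inducing a saturated central measure) is strictly weaker than being a MERW in the sense of Definition~\ref{def}. Plenty of NNHFs are \emph{not} subsequential limits of the $\varphi_N$ --- that is exactly the content of Corollary~\ref{coro}, which singles out MERWs among all central Markov chains --- so the uniqueness of $\lim_N \varphi_N$ does not by itself pin down your $\varphi$. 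Your closing diagnosis, that $R$-positivity is needed to rule out ``degenerate (identically-vanishing-somewhere) solutions'', misidentifies the issue. What is actually missing is: (i) the observation that \emph{time-homogeneity} of the kernel on $G$ forces the NNHF to have the product form $r^n h(x)$ with $Ah = r^{-1}h$, and then (ii) the Vere--Jones fact that under $R$-recurrence a positive eigenfunction of $A$ exists only for the eigenvalue $\rho$ and is unique up to scaling. Neither step appears in your sketch.

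For comparison, the paper's proof is shorter and bypasses the NNHF framework entirely. From \eqref{unifweight} it derives, by summing over all length-$(n-1)$ continuations from $y$, the identity
\[
p(x,y)\;=\;A(x,y)\sum_{z\in G}\frac{A^{n-1}(y,z)}{A^n(x,z)}\,\mathbb{P}_x(X_n=z),
\]
valid for every $n$. It then sends $n\to\infty$ along the appropriate periodic subsequence, using the $R$-positive asymptotic $A^{dn+k}(x,z)\sim \rho^{dn+k}\psi(x)\varphi(z)$ from \cite[Theorem~7.2]{VJI} together with dominated convergence, to obtain $p(x,y)=A(x,y)\psi(y)/(\rho\,\psi(x))$ directly.
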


\begin{proof}
	Assuming (\ref{unifweight}) and letting $\mathcal P_{n-1}^{y,z}$ be the set of paths from $y$ to $z$ of length $n-1$, one can easily check that 
	\begin{equation}\label{ratio}
	p(x, y) = \sum_{z\in G}\sum_{c\in\mathcal P_n^{y,z}} p(x\nearrow y\nearrow \cdots\nearrow z)= A(x, y) \sum_{z \in G} \frac{A^{n-1}(y, z)}{A^n(x, z)} \mathbb{P}_x(X_n = z).
	\end{equation}    
	
	Let us denote by $\varphi$ and $\psi$ the unique (up to a multiplicative term) positive left and right eigenvectors of $A$ associated with the combinatorial spectral radius $\rho$ such that $\langle \psi, \varphi \rangle = 1$ for the usual scalar product. 
	
	Let $d \geq 1$ be the period of $A$, \emph{i.e.\@} $d = {\rm gcd}(S)$ where $S = \{n \geq 1 : p^n(e, e) > 0\}$ for some $e \in G$. By using again \cite[Theorem 7.2]{VJI}, there exists a unique $0 \leq k < d$ such that $A^{dn+k}(x, y) > 0$ for all $n$ sufficiently large and  
	\begin{equation}\label{asymp0}
	A^{dn+k}(x, y) \underset{n \to \infty}{\sim} \rho^{dn+k} \psi(x) \varphi(y).
	\end{equation}

	Therefore, we deduce the result from (\ref{ratio}) and the dominated convergence theorem.
\end{proof}

\begin{rem}
	The latter proof provides a method to approximate MERW on $R$-positive graphs. 
	
	Indeed, let $f : G \longrightarrow (0, \infty)$ satisfy $\sum_{y \in G} f(y) \varphi(y) < \infty$, where $\varphi$ is the left $\rho$-eigenfunction. Set for any $n \geq 1$ and $x, y \in G$,
	\begin{equation}
	p_{n}(x, y) = A(x, y) \frac{A^{n-1} f(y)}{A^{n} f(x)}.
	\end{equation}
	
	In the case when $f \equiv 1$, the probability $p_n(x, y)$ is simply the (weighted) proportion, among the paths of length $n$ starting from $x$, of those beginning with the transition $x \nearrow y$. When $n = 1$, this corresponds to the usual GRW. 
	
	Similar arguments as before show that $p_n$ converges pointwise to the MERW kernel as $n$ goes to infinity.
\end{rem}

\subsection{Illustrative examples}

\label{sec:illustra}

This section is devoted to some examples that illustrate the concepts introduced in the previous sections (but also in Appendix \ref{appendix}) and present some techniques and approaches for studying MERWs on WBDs.\\

\noindent
{\textit{i) About the support and uniqueness.}} The first two toy models of BD $\mathbb{X}$ we consider are detailed in Figure \ref{toy}. The first one shows that there may not exist MERWs with full support, that is with $\mathbb X^*=\mathbb X$,  whereas the second one highlights that there may exist several MERWs.\\

\textbullet\; The first model consists of level sets $\mathbb{X}_n = \{n_l, n_r\}$, for $n \geq 1$, with the edges $\varnothing\nearrow 1_i$, $i\in\{l,r\}$,  $n_l\nearrow (n+1)_l$, $n_r\nearrow (n+1)_r$, $n\geq 1$. We find that $d(\varnothing, n_l) = 1$, $d(\varnothing, n_r) = n$, and more generally $d(n_l, m_l) = d(n_r, m_r) = m - n$, $d(n_l, m_r) = m - n$, and $d(n_r, m_l) = 0$ for all $m \geq n \geq 1$. 

By using (\ref{Martin}) one can see that $\partial \mathbb{X}$ contains only one point, denoted as $\delta$. The unique  MERW corresponds to the unique extremal NNHF $\varphi^\ast = \varphi_\delta$, where $\varphi_\delta(n_l) = 1$ and $\varphi_\delta(n_r) = 0$. This implies that $\mathbb{X}^\ast = \{\varnothing, 1_r, 2_r, \ldots\}$, indicating that no MERW with full support exists.\\

\textbullet\; Regarding the second model, we denote $\mathbb{X}_n$ as $\mathbb{L}_n \sqcup \mathbb{R}_n$, representing the left and right-hand sides when $n \geq 1$. One can check that $\partial \mathbb{X} = \{l, r\}$, where $\varphi_{l}(x) = 1 / {\rm card}(\mathbb{L}_n)$ if $x \in \mathbb{L}_n$ and $\varphi_{l}(x) = 0$ if $x \in \mathbb{R}_n$. Similarly, $\varphi_{r}$ is defined by exchanging the letters $l$ and $r$. 

Considering $m_n$ as defined in (\ref{marginalunif}), we observe that $m_n(\mathbb{L}_n) = 1/2$ (resp. $1/3$, $2/3$) when $n = 3k + 1$ (resp. $n = 3k + 2$, $n = 3k + 3$) for some $k \geq 0$.  In the light of Corollary \ref{coro}, there are three distinct limit points $m^\ast$, each corresponding to one of the three MERWs depicted in Figure \ref{toy}. Therefore, there is no uniqueness for this model.

\begin{figure}[H]
	\centering
	\includegraphics[width=0.75\textwidth]{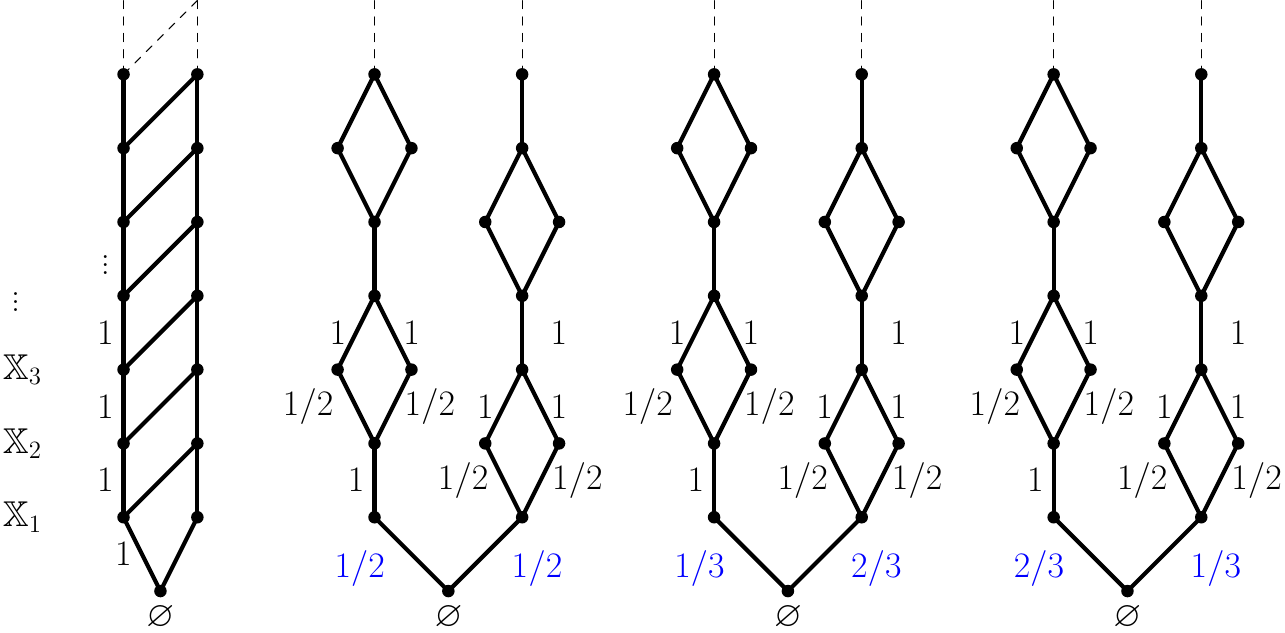}
	\captionsetup{width=0.8\textwidth}
	\caption{The first example, on the left, show that the unique MERW has no full support. On the the second one, it is represented  the transition probabilities of the three possible MERWs.}	
	\label{toy}
\end{figure}

\noindent
{\textit{ii) Correspondence with irreducible graphs.}} Consider the usual BD $\mathbb{X} = \mathbb{N}^2$, where the $n$th level set is $\mathbb{X}_n = \{(k, n-k) : 0 \leq k \leq n\}$. This BD is called the Pascal lattice in \cite{kerov}.

It is known that there is a one-to-one correspondence between NNHFs and probability distributions $m$ on the interval $[0,1]$. This correspondence is established through 
\begin{equation}
\varphi_m(k, n-k) = \int_0^1 p^k (1-p)^{n-k} m(dp).
\end{equation}

It turns out that an infinite path $t_n = (k_n, n-k_n)$ is regular if and only if there exists a $p \in [0,1]$ such that $\lim_{n \to \infty} {k_n}/{n} = p$. Furthermore, since the BD is regular of degree $2$, the unique MERW is simply the GRW, and it is associated with $m^\ast = \delta_{1/2}$. 

Indeed, let  $\mu_{1/2}$ be the distribution of the central Markov chain (starting from the root) corresponding to $\varphi_{1/2}\equiv \varphi_{\delta_{1/2}}$. Since all the paths of length $n$ starting from the root have the same probability $2^{-n}$,  the restriction of $\mu_{1/2}$ to $\mathcal T_n$ is equal to $\mu_n$. Necessarily  $\mu^\ast=\mu_{1/2}$.

\begin{rem}
	Note that the combinatorial identity (\ref{combipower0}) in this case is nothing but the well-known binomial expansion.
\end{rem}

\begin{rem}\label{rem:polya}
	It is interesting to observe that the central Markov chain corresponding to the uniform distribution $m$ on $[0,1]$ leads to the Pólya urn model with two colors. 
	
	This model is known to be an exchangeable process, exemplifying property (\ref{Unif}): all paths starting and ending at given points have the same probability when $w \equiv 1$.  	
\end{rem}

In light of Section \ref{section:connexion}, we can interpret $\mathbb{X}$ as the BD corresponding to the graph $G = \mathbb{Z}$. The unique $(1/r)$-harmonic function on $G$, with $0 < r \leq 1/2$ and $\psi(0) = 1$, is expressed as $\psi(x) = e^{\alpha x}$, where $2 \cosh(\alpha) = 1/r$. Applying the change of variable $x = 2k - n \in \mathbb{Z}$, we obtain
\begin{equation}
p^k (1-p)^{n-k} = \left(p(1-p)\right)^{\frac{n}{2}} \left(\frac{p}{1-p}\right)^{\frac{x}{2}}.
\end{equation}

Consequently, every extremal NNHF of the BD can be represented as in (\ref{product}) (even if the assumptions outlined in \cite{KestenRatio} are not fully met). 

To extend our analysis, introduce a weight function \(w\) into the aforementioned BD. Let us set \(w(x, y) = 1\) for all edges except those originating from the diagonal, where 
$$w((i, i), (i+1, i)) = \alpha \quad \text{and} \quad w((i, i), (i, i+1)) = \beta.$$

\begin{figure}[H]
	\centering
	\includegraphics[width=0.75\textwidth]{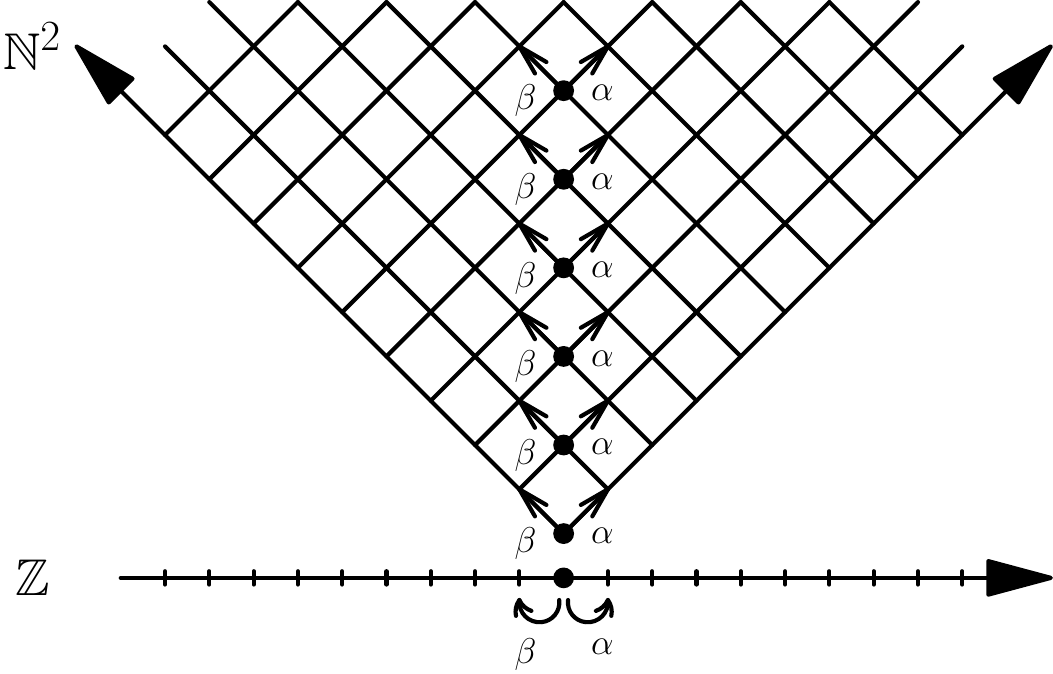}
	\caption{Weighted Pascal lattice}
	\label{RW}
\end{figure}

We refer to Figure \ref{RW}. In that case, describe the central measures and obtaining the corresponding MERWs presents a more complex challenge. Let us set
\begin{equation}\label{param}
\gamma=\frac{\alpha+\beta}{2},\quad s=\frac{1}{2}\sqrt{1-\left(1-\frac{1}{\gamma}\right)^2},\quad  \rho=s^{-1}\quad and \quad q=(2\gamma s)^{-1}. 
\end{equation}

\begin{prop}\label{prop:pascalweight}
There is a unique MERW on the latter weighted Pascal lattice. It is associated with the NNHF defined by
\begin{equation}\label{pascalharmo}
\varphi^\ast(n,x)=\frac{1+(\gamma^{-1}-1)|x|}{2^n}\quad (\text{if $\gamma <1$})\quad\mbox{or}\quad \varphi^\ast(n,x)=\frac{q^{|x|}}{\rho^n}\quad (\text{if $\gamma >1$}).
\end{equation}
Recall that for all $(k,n-k)\in \mathbb X_n$, we set $x=2k-n$. 
\end{prop}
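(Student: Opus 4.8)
The plan is to compute the limit points $\varphi^\ast$ of the normalized path-counting functions $\varphi_N(n,x) = d((n,x),\mathbb X_N)/d(\varnothing,\mathbb X_N)$ directly, using the reduction to the irreducible graph $G=\mathbb Z$ described in Section \ref{section:connexion}. Indeed, by \eqref{limitpoint}, everything is controlled by the asymptotics of $A^N\mathds 1(x)$, where $A$ is the weighted adjacency matrix on $\mathbb Z$ in which each edge has weight $1$ except that the step from an even-diagonal-type state carries the special weights $\alpha,\beta$. After the change of variable $x=2k-n$, the diagonal of the Pascal lattice corresponds to the origin being visited, so $A$ is a nearest-neighbour operator on $\mathbb Z$ which is translation invariant away from $0$ (weight $1$ on every edge) and perturbed near $0$. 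The first step is therefore to analyze the spectral/combinatorial radius of this perturbed walk: write the generating function $\sum_N A^N(x,y) z^N$ via a first-passage decomposition at $0$, solve the homogeneous recursion $\psi(x-1)+\psi(x+1) = \lambda\psi(x)$ away from $0$ by $\psi(x)=\theta^{|x|}$ (so $\theta+\theta^{-1}=\lambda$), and impose the perturbed equation at $0$, namely $\alpha\psi(1)+\beta\psi(-1)=\lambda\psi(0)$ rewritten in the symmetric variable. Carrying out this computation should produce exactly the parameters in \eqref{param}: $\gamma=(\alpha+\beta)/2$, the critical value $\lambda = 2\gamma s$ with $s=\tfrac12\sqrt{1-(1-1/\gamma)^2}$, and $q=(2\gamma s)^{-1}=1/\lambda$ as the decay rate $\theta$ of the harmonic function.

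Next I would split into the two regimes according to whether the perturbed walk is $R$-recurrent or $R$-transient. The dichotomy $\gamma<1$ versus $\gamma>1$ should be precisely the $R$-recurrent versus $R$-transient dichotomy (with $\gamma=1$ the unweighted boundary case, $\rho=1$, where $\varphi^\ast(n,x)=2^{-n}$, matching both formulas in the limit). In the $R$-recurrent case $\gamma<1$: here $\rho$ for the WBD is $2$ (the combinatorial radius of the Pascal lattice is unchanged because the perturbation at the diagonal is a bounded, recurrent-type perturbation — one must check $2\gamma s = 1$ when... actually one checks the relevant normalization), and by the $R$-recurrence the limit point is unique and equals the minimal/$\rho$-harmonic function; I would identify it as the linear-in-$|x|$ function $1+(\gamma^{-1}-1)|x|$ divided by $2^n$, verifying directly that it satisfies the NNHF equation \eqref{harmonicc} — away from the diagonal this is the statement that $1+c|x|$ is harmonic for the simple walk kernel $\tfrac12(\delta_{x-1}+\delta_{x+1})$ on the halves, and on the diagonal it reduces to the identity $\alpha+\beta=2\gamma$. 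Uniqueness then follows from Corollary \ref{coro} together with the $R$-recurrence (only one limit point $m^\ast$, hence one $\varphi^\ast$), or alternatively from the finite-approximation result recalled in the introduction. In the $R$-transient case $\gamma>1$: the function $q^{|x|}/\rho^n$ with $q=1/\rho'$, $\rho'=2\gamma s$, is shown to be an NNHF by the same two checks (geometric decay solves the homogeneous recursion since $q+q^{-1}=\rho'/(\gamma)\cdot\text{something}$ — precisely $2\gamma s\cdot q \cdot(\text{...})$; the diagonal check again gives $\alpha q+\beta q = 2\gamma s\cdot 1$, i.e. $2\gamma q = 2\gamma s$... so $q=s$, consistent with $q=(2\gamma s)^{-1}$ only if $s^2=1/(2\gamma)$, which I should double-check against \eqref{param} and adjust the bookkeeping accordingly). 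Uniqueness in the transient case is the subtle point: here the Martin boundary could a priori be larger, but the key observation is that the only geometric rates $\theta$ giving a positive bounded-type harmonic function compatible with the space-time grading are $\theta=q$ and $\theta=q^{-1}$, and $\theta=q^{-1}>1$ is excluded because it would force $\varphi^\ast$ to grow, contradicting $\varphi_N\le 1$; hence a single extremal NNHF, hence uniqueness by Corollary \ref{coro}.

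I would then assemble the argument: (i) reduce to $G=\mathbb Z$ with the perturbed weights; (ii) compute the first-passage generating function at $0$ and extract $R=1/\rho'$ and the regime dichotomy in terms of $\gamma$; (iii) in each regime exhibit the candidate NNHF and verify the harmonicity identity \eqref{harmonicc} by the away-from-diagonal recursion plus the one diagonal equation, also checking the normalization $\varphi^\ast(\varnothing)=1$; (iv) show these are the only limit points of $(\varphi_N)$ — in the recurrent case by $R$-recurrence/uniqueness of $m^\ast$, in the transient case by ruling out the growing geometric solution; (v) conclude via Definition \ref{def} and Corollary \ref{coro} that there is a unique MERW given by the stated $\varphi^\ast$.

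The main obstacle I anticipate is step (iv) in the transient regime $\gamma>1$, i.e. proving uniqueness rather than mere existence. Existence of the stated NNHF is a routine verification, and the recurrent case is handled by the general machinery, but in the transient case the general theory (as emphasized in the introduction and in \cite{Duboux}) does not guarantee uniqueness, so one genuinely needs the structural fact that all positive harmonic functions of this particular perturbed walk on $\mathbb Z$ are combinations of the two geometric solutions $q^{\pm|x|}$ (equivalently, that the Martin boundary of the perturbed walk consists of two points, one at $+\infty$ and one at $-\infty$, which then collapse to a single point of $\partial\mathbb X$ after the space-time lift because both contribute the same decay $q$). Establishing this cleanly — perhaps by an explicit Martin kernel computation $K(x,t_n)=\lim d(x,t_n)/d(\varnothing,t_n)$ along paths $t_n$ drifting to $\pm\infty$, showing the limit is $q^{|x|}$ in both cases — is where the real work lies; a secondary annoyance will be keeping the algebra consistent so that the verification genuinely lands on the exact expressions $s=\tfrac12\sqrt{1-(1-1/\gamma)^2}$ and $q=(2\gamma s)^{-1}$ in \eqref{param}.
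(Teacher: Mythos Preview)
Your overall reduction to the perturbed nearest-neighbour operator on $\mathbb Z$ is correct and matches the paper, but from that point on the two proofs diverge substantially.

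The paper does \emph{not} verify harmonicity of a guessed $\varphi^\ast$ and then argue uniqueness abstractly. Instead it introduces the generating functions $F_x(z)=\sum_{n\ge 0}\sum_{y}A^n(x,y)z^n$, solves the linear recursion $F_x=1+zF_{x+1}+zF_{x-1}$ (with the perturbed equation at $0$) to obtain a closed form in terms of $R(z)=(1-\sqrt{1-4z^2})/(2z)$, locates the dominant singularity ($1/2$ when $\gamma<1$, the pole $s$ of $1/(1-2\gamma z R(z))$ when $\gamma>1$), and applies the transfer theorem of analytic combinatorics to get precise asymptotics of $A^N\mathds 1(x)$. The ratio in \eqref{limitpoint} then \emph{converges} outright, which yields existence and uniqueness of the MERW simultaneously, with no separate Martin-boundary step.

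Your route could in principle be completed, but as written it has a real gap in the transient case $\gamma>1$. Showing that the only geometric $\rho$-harmonic functions are $q^{\pm|x|}$ and discarding the growing one only tells you there is a single \emph{extremal} NNHF of that specific product form; it does not, by itself, force every limit point of $(\varphi_N)$ to equal it. You would still need either (a) a full description of the Martin boundary of this WBD (not just of the graph $\mathbb Z$), or (b) an argument that $(\varphi_N)$ actually converges. The paper sidesteps this entirely by proving (b) via singularity analysis. Your algebra around $q$, $s$, and $\rho$ is also not yet consistent (e.g.\ your checks ``$q=s$'' contradict $q=(2\gamma s)^{-1}$ unless $2\gamma s^2=1$, which is indeed what \eqref{param} gives, but you have not established it); the generating-function computation in the paper resolves these constants automatically rather than by ad hoc verification.
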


\begin{proof} Let $A$ be the weighted adjacency matrix on $G=\mathbb Z$ defined as \( A(x,x\pm 1)=1 \) for \( x\neq 0 \), \( A(0,1)=\alpha \), \( A(0,-1)=\beta \), and \( A(x,y)=0 \) otherwise. 
	
Following (\ref{limitpoint}), to compute the MERWs, we need to study  the asymptotics of the weighted number of walks of length \( n \). To this end, introduce the generating functions
\begin{equation}
F_x(z)=\sum_{n\geq 0}\sum_{y\in \mathbb Z}A^n(x,y)z^n.
\end{equation}
One can check that \( F_{x}(z)=1+zF_{x+1}(z)+ zF_{x-1}(z) \) on \( \mathbb Z^\ast \) and \( F_0(z)=1+\alpha zF_1(z)+ \beta zF_{-1}(z) \). 

Thereafter, we can obtain
\begin{equation}
F_x(z)=\frac{1}{1-2z} -\frac{2(1-\gamma)z}{1-2z}\frac{(R(z))^{|x|}}{1-2\gamma z R(z)},\quad\text{where}\quad R(z)=\frac{1-\sqrt{1-4z^2}}{2z}.
\end{equation}
Assuming \(\gamma <1\), one can see that \( 1/2 \) is the smallest singularity of \( F_x(z) \), leading to 
\begin{equation}
F_x(z)\underset{z\to 1/2}{\sim} \sqrt{2}\left(\frac{\gamma}{1-\gamma}+|x|\right)\frac{1}{\sqrt{1-2z}}.
\end{equation}
On the contrary, when  \(\gamma>1 \), the smallest singularity occurs at $s\in(0,1/2)$ given in (\ref{param}) and one can check that  \( 2\gamma s R(s)=1 \) and
\begin{equation}
F_x(z)\underset{z\to s}{\sim} \frac{1}{2}\left(1-\frac{1}{\gamma}\right)^2\frac{R(s)^{|x|}}{1-2s}\frac{1}{s-z}.
\end{equation}

Since $1/(1-2z)$ is analytic on $\mathbb{C} \setminus \{1/2\}$ and $R(z)$ has an analytic continuation on $\mathbb{C} \setminus ]1/2,\infty[$, it follows that $F_x(z)$ admits an analytic continuation in some $\Delta$-domain (an open pacman-shaped region with the mouth corner located at the singularity $s$).  

Then, one can apply the transfer theorem, as detailed in \cite{Flajo,FlajoB}, and derive the asymptotics of (\ref{limitpoint}) in both cases, which imply \eqref{pascalharmo}.
\end{proof}

\noindent
\textit{iii) The Young Lattice.} Let us introduce the celebrated Young Lattice $\mathbb{Y} = \bigsqcup_{n \in \mathbb{N}} \mathbb{Y}_n$, a fundamental structure in combinatorics. This is the BD whose $n$th level set can be defined as
\begin{equation}
\mathbb{Y}_n = \left\{(n_1, n_2, \cdots) \in \bigcup_{k \geq 0} \mathbb{N}^k : n_1 \geq n_2 \geq \cdots \quad \text{and} \quad n_1 + n_2 + \cdots = n \right\}.
\end{equation}
Here $\mathbb N=\{1,2,\cdots\}$ and $\mathbb Y_0=\{\emptyset\}$.

As illustrated in Figure \ref{youngtabl}, each vertex of $\mathbb{Y}$ can be represented by Young diagrams and finite paths starting from the root in $\mathbb{Y}$ are represented by standard Young tableaux. For a comprehensive understanding, we can refer to  \cite{kerov, KerVer}.

\begin{figure}[H]
	\centering
	\includegraphics[width=0.5\textwidth]{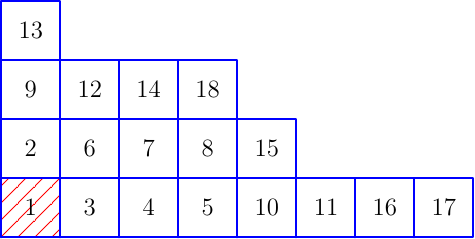}
	\captionsetup{width=0.8\textwidth}
	\caption{\it A standard Young tableau (in French notation) representing a path in $\mathbb{Y}$ from the root to the partition $8+5+4+1=18$.}
	\label{youngtabl}
\end{figure}

It is known that the boundary of $\mathbb{Y}$ is the Thoma simplex
\begin{equation}
\partial \mathbb{Y} = \left\{\alpha_1 \geq \alpha_2 \geq \cdots \geq 0; \beta_1 \geq \beta_2 \geq \cdots \geq 0 \;\bigg|\; \sum_{i \geq 1} \alpha_i + \sum_{i \geq 1} \beta_i \leq 1 \right\}.
\end{equation}

To explain this boundary, let us consider an infinite path $(\lambda_n)_{n \geq 0}$ in the Young lattice. The modified Frobenius coordinates of $\lambda_n$, denoted by $f_i^{(n)}$ and $g_i^{(n)}$, are depicted  in Figure \ref{frobe}. It can be shown that the path $\lambda_n$ is regular if and only if there exist $\alpha, \beta \in \partial \mathbb{Y}$ such that for all $i \geq 1$,
\begin{equation}
\lim_{n \to \infty} \frac{f_i^{(n)}}{n} = \alpha_i \quad \text{and} \quad \lim_{n \to \infty} \frac{g_i^{(n)}}{n} = \beta_i.
\end{equation}

\begin{figure}[H]
	\centering
	\includegraphics[width=0.55\textwidth]{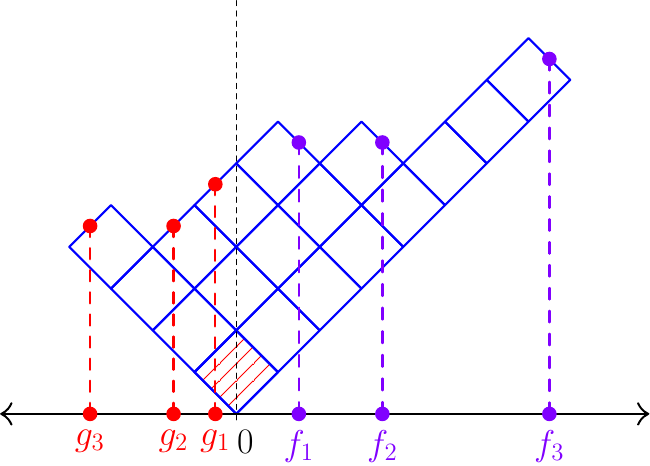}
	\captionsetup{width=0.8\textwidth}
	\caption{Modified Frobenius coordinates of a Young diagram (in Russian style). For example, $g_1 = 1/2$ and $f_1 = 3/2$.}
	\label{frobe}
\end{figure}

The Plancherel growth process corresponds to the central Markov chain associated with $\alpha = \beta \equiv 0$. It is the central measure given for any $\lambda$ of size $n$ (denote $\lambda \vdash n$) as
\begin{equation}\label{hook}
\mathbb{P}_n(\lambda) = \frac{d(\varnothing, \lambda)^2}{n!}, \quad \text{where} \quad d(\varnothing, \lambda) = \frac{n!}{\prod_{(i, j) \in \lambda} h_\lambda(i, j)}.
\end{equation}

The right-hand side of the latter equation is the celebrated Hook length formula, where $h_\lambda(i, j)$ denotes the hook length of the cell $(i, j)$ in $\lambda$. See for instance \cite{Romik_2015} for an overview.

\begin{prop}\label{plancherelgrowth}
	The unique MERW associated with the Young lattice is nothing but the Plancherel growth process.
\end{prop}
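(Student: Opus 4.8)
The plan is to identify the Martin boundary point corresponding to the MERW and show it is the regular path with $\alpha=\beta\equiv 0$. By Corollary \ref{coro}, a MERW on the Young lattice $\mathbb{Y}$ corresponds to a limit point $m^\ast$ of the marginals
\[
m_n(\lambda)=\frac{d(\varnothing,\lambda)}{d(\varnothing,\mathbb{Y}_n)},\qquad \lambda\vdash n,
\]
on the compactification $\mathbb{Y}\sqcup\partial\mathbb{Y}$, and the associated NNHF is $\varphi^\ast(\cdot)=\int_{\partial\mathbb{Y}}K(\cdot,\zeta)\,m^\ast(d\zeta)$. So it suffices to show that every such limit point $m^\ast$ is the Dirac mass at the regular point $\alpha=\beta\equiv 0$; then $\varphi^\ast=K(\cdot,\zeta_0)$ is the Plancherel PHF (the one giving weight $d(\varnothing,\lambda)/n!$ at level $n$, cf. \eqref{hook}), and uniqueness follows automatically.

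The key computational input is the classical asymptotics of $d(\varnothing,\mathbb{Y}_n)=\sum_{\lambda\vdash n}d(\varnothing,\lambda)$, the number of standard Young tableaux with $n$ cells, namely $d(\varnothing,\mathbb{Y}_n)=t_n$ where $t_n\sim \sqrt{n!}\,n^{-\,?}$ — more precisely the well-known estimate $\ln t_n=\tfrac12 n\ln n-\tfrac12 n+o(n)$ (equivalently $t_n=\sqrt{n!}\,e^{O(\sqrt n)}$), which follows from the Plancherel identity $\sum_{\lambda\vdash n}d(\varnothing,\lambda)^2=n!$ together with $t_n^2\le |\mathbb{Y}_n|\sum_\lambda d(\varnothing,\lambda)^2$ and a lower bound. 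The strategy is then: first, show that $m^\ast$ is supported on $\partial\mathbb{Y}$ (immediate, since $m_n(\mathbb{Y}_n)=1$ and $\mathbb{Y}_n$ leaves every finite set); second, suppose for contradiction that $m^\ast\neq\delta_{\zeta_0}$, so $m^\ast$ charges a region of $\partial\mathbb{Y}$ where $\sum_i\alpha_i+\sum_i\beta_i\ge\delta>0$ with positive mass; third, use the known super-exponential decay of the Martin kernel $K((1),\zeta)$ — equivalently the fact that for such $\zeta$ the Plancherel-type growth along $\zeta$ produces far fewer tableaux — to contradict the normalization. Concretely, one compares $\varphi^\ast$ against $\varphi_{\zeta_0}$: using \eqref{combipower0}, $\sum_{\lambda\vdash n}d(\varnothing,\lambda)\varphi^\ast(\lambda)=1$, and if $m^\ast$ charges non-Plancherel boundary points, the corresponding harmonic function grows too slowly along typical tableaux, forcing a contradiction with the asymptotic count $t_n$.

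Alternatively — and this is probably the cleaner route — I would argue directly with $\varphi_n$. We have $\varphi_n(\lambda)=d(\lambda,\mathbb{Y}_n)/d(\varnothing,\mathbb{Y}_n)$ for $\lambda\vdash k$ with $k<n$; here $d(\lambda,\mathbb{Y}_n)=\sum_{\mu\vdash n,\ \mu\supseteq\lambda}d(\lambda,\mu)$ is the number of standard skew tableaux of shape $\mu/\lambda$, summed over $\mu$, which equals the number of paths of length $n-k$ in $\mathbb{Y}$ from $\lambda$ to level $n$. The point is to show $\varphi_n(\lambda)\to d(\varnothing,\lambda)/k!$, the Plancherel PHF value. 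This is a statement about the ratio of (skew) tableau counts to straight tableau counts, and it follows from the asymptotic enumeration $d(\lambda,\mathbb{Y}_n)\sim d(\varnothing,\lambda)\cdot t_{n-k}\cdot \binom{n}{k}^{?}$ — more precisely one uses that removing $\lambda$ costs a factor $d(\varnothing,\lambda)/k!$ in the leading asymptotics, a fact one can extract from the branching rule and the $\tfrac12 n\ln n$ asymptotics of $\ln t_n$, or cite directly from Vershik–Kerov's ergodic method / the known uniqueness of the Plancherel measure as the limit of $m_n$. The main obstacle is exactly this asymptotic ratio estimate: proving $\varphi_n\to$ Plancherel PHF rigorously requires either invoking the RSK-based concentration of $m_n$ near $\alpha=\beta=0$ (Vershik–Kerov, Logan–Shepp) or a self-contained hook-length asymptotic argument; I would cite the former, since the limit shape result $m_n\Rightarrow\delta_{\zeta_0}$ is precisely "$m^\ast=\delta_{\zeta_0}$", which via Corollary \ref{coro} instantly yields $\varphi^\ast=$ Plancherel PHF and hence the claim, including uniqueness.
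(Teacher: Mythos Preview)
Your final route---identifying $m_n$ with the Gelfand measure, citing the Vershik--Kerov/Logan--Shepp limit shape result to conclude $m_n\Rightarrow\delta_{\zeta_0}$ (the point $\alpha=\beta\equiv 0$), and then invoking Corollary~\ref{coro}---is correct and is precisely the paper's argument (the paper cites M\'eliot for the fact that the Gelfand and Plancherel measures share the same limit shape). The two preliminary sketches you offer before that, however, are not proofs as written: the contradiction via ``super-exponential decay of the Martin kernel'' is never made precise, and the direct asymptotic for $d(\lambda,\mathbb{Y}_n)/d(\varnothing,\mathbb{Y}_n)$ is left with literal question marks; these should simply be discarded in favor of the clean citation argument you arrive at in the last sentence.
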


\begin{proof}
To begin with, the Gelfand measure, which corresponds to selecting a path of length $n$ uniformly and examining the Young diagram $\lambda$ obtained, is given by  
\begin{equation}
\mathbb{G}_n(\lambda) = \frac{d(\varnothing, \lambda)}{\sum_{\mu \vdash n } d(\varnothing, \mu)}.
\end{equation}
Hence, it coincides with the distribution $m_n$ introduced in (\ref{marginalunif}).  

Furthermore, it turns out that the (almost sure) limit shape of a large random Young diagram is identical under both the Plancherel and Gelfand measures. For further details, see \cite{Meliot}.  

Therefore, the proof follows from Corollary \ref{coro}.

\end{proof}


\section{Maximal entropy growing trees}

\label{section:tree}

\setcounter{equation}{0}

Following \cite{Bertoin}, consider the infinite genealogical tree 
\begin{equation}
\mathcal{I} = \bigcup_{n=0}^\infty \mathbb{N}^n,\quad\text{with $\mathbb{N}^0 = \{\emptyset\}$}.
\end{equation}

 The elements of $\mathcal{I}$ are sometimes called individuals. For any $i \in \mathcal{I}$, we denote by $|i|$ the integer $k \geq 0$ such that $i \in \mathbb{N}^k$. We say that $i$ belongs to the $k$-th generation. For any $i = (i_1, \cdots, i_k) \in \mathbb{N}^k$ and $j \in \mathbb{N}$, we set $ij = (i_1, \cdots, i_k, j)$. When $i=\emptyset$ (that is $k=0$) we simply write $ij=j$. The individual $ij$ is named the $j$-th child of $i$ and $i$ is the parent of $ij$.   
 
 More generally, we set $ij = (i_1, \cdots, i_k, j_1, \cdots, j_l)$ in an obvious meaning. The individual $ij$ is a descendant of $i$, and $i$ is an ancestor of $ij$.

 For a subset $\tau \subset \mathcal{I}$, we denote by $|\tau| \in \mathbb{N} \cup \{\infty\}$ its cardinal (named also its size). The following definition is borrowed from \cite{chauvin}.

\begin{defi}\label{def:prefix}
 A subset $\tau \subset \mathcal{I}$ is called a prefix tree if $\emptyset \in \tau$ and for all $i, j \in \mathcal{I}$, $i \in \tau$ whenever $ij \in \tau$. We denote by $\mathcal{P}_n$ the set of all prefix trees of size $n\geq 1$ and we set 
 \begin{equation}
 \mathcal{P} = \bigsqcup_{n \geq 0} \mathcal{P}_{n+1}.
 \end{equation}
 \end{defi}

\subsection{The BD structure of $\mathcal P$}

The set $\mathcal{P}$ of finite prefix trees is naturally endowed with a BD structure where the $n$th level set is  $\mathcal P_{n+1}$.  More precisely, one has $\tau \nearrow \sigma$ if and only if $\sigma = \tau \sqcup \{ij\}$ for some $i \in \tau$ and $j \in \mathbb{N}$ such that $ij \notin \tau$. Note that $\{\tau \to \sigma\} \neq \emptyset$ if and only if $\tau \subset \sigma$.

\begin{rem}
In the setting of Section \ref{sec:WBD}, the $n$th level set  of $\mathcal{P}$ consists of prefix trees of size $n+1$. For instance, $\varnothing = \{\emptyset\}$ is the root of the Bratteli diagram $\mathcal P$, its size is equal to $1$.
\end{rem}

For any $i \in \mathcal{I}$ and any prefix tree $\tau$, one can define the prefix tree of all the descendants of $i$ in $\tau$ by setting
\begin{equation}
\tau^{(i)} = \{j \in \mathcal{I} : ij \in \tau\}.
\end{equation}

In particular, the number of descendants of $i$ in $\tau$, including $i$, is given by $|\tau^{(i)}|$. In the case when $i \notin \tau$, one has $\tau^{(i)}=\emptyset$ and $|\tau^{(i)}|= 0$.

\begin{rem}
	Similarly to the correspondence between Young diagrams and standard Young tableaux illustrated in Figure \ref{youngtabl}, there is a one-to-one correspondence between paths in $\mathcal{P}$ and rooted trees with increasing labelings.
\end{rem}

To go further, the number of ordered increasing labelings of a given rooted tree $\sigma$ is 
\begin{equation}
d(\varnothing, \sigma) = \frac{|\sigma |!}{\displaystyle \prod_{i \in \sigma} |\sigma^{(i)}|}.
\end{equation}
Even though the proof follows easily by induction, we can refer to \cite[Chap. 5.1.4, Exer. 20]{Knuth1973}.

More generally, for any $\tau, \sigma \in \mathcal{P}$ with $\tau \subset \sigma$, it is straightforward to show that the combinatorial dimension between $\tau$ and $\sigma$ is given by  
\begin{equation}\label{hooktree}
d(\tau, \sigma) = \frac{|\sigma \setminus \tau|!}{\displaystyle \prod_{i \in \sigma \setminus \tau} |\sigma^{(i)}|!}.
\end{equation}

This equality is known as the hook-length formula for increasing labeled rooted trees, similar to that given in (\ref{hook}) for standard Young tableaux.

In particular, we obtain that, for all $\tau\subset \sigma \in \mathcal{P}$ of size $k$ and $n$ respectively, one has 
\begin{equation}\label{hook2}
\frac{d(\tau, \sigma)}{d(\varnothing, \sigma)} = \frac{\prod_{i \in \tau} |\sigma^{(i)}|}{n \cdots (n - k + 1)}.
\end{equation}
Then, by using the results in Appendix \ref{appendix} (see also Section \ref{sec:central}), one can easily describe the structure of the combinatorial Martin boundary of $\mathcal P$.

\begin{thm}\label{bratstructure}
	An infinite path $(\tau_n)_{n \geq 0}$ in $\mathcal{P}$ is regular if and only if, for all $i \in \mathcal{I}$, there exists $0 \leq \alpha_i \leq 1$ such that  
	\begin{equation}\label{asymp}
	\lim_{n \to \infty} \frac{|\tau_n^{(i)}|}{n} = \alpha_i.
	\end{equation}
	
	Consequently, there is a one-to-one correspondence between the Martin boundary $\partial \mathcal{P}$ and labeled infinite genealogical trees $(\alpha_i)_{i \in \mathcal{I}} \in [0,1]^{\mathcal{I}}$ satisfying
	\begin{equation}\label{eq:labelgenealogical}
	\alpha_\emptyset = 1\quad\text{and}\quad\forall i\in\mathcal{I},\; \alpha_i = \sum_{n \in \mathbb{N}} \alpha_{in}.
	\end{equation}  
	
	The corresponding extremal NNHFs and ergodic saturated central Markov kernels are then given respectively by
	\begin{equation}\label{NNHFtree}
	\varphi(\tau) = \prod_{i \in \tau} \alpha_i \quad \text{and} \quad p(\tau, \sigma) = \alpha_{ij},
	\end{equation}
	for any $\tau, \sigma \in \mathcal{P}$ with $\sigma = \tau \sqcup \{ij\}$, where $i \in \tau$ and $j \in \mathbb{N}$ with $ij \notin \tau$.
	
	In particular, we deduce from (\ref{combipower}) (equation \eqref{combipower0} in the introduction) the following combinatorial identity: for all $n\geq 0$ and arbitrary labels $(\alpha_i)_{i\in\mathcal{I}}$ satisfying (\ref{eq:labelgenealogical}),
	\begin{equation}\label{comb2}
	\sum_{\sigma \in \mathcal{P}_n} \frac{n!}{\displaystyle \prod_{i \in \sigma} |\sigma^{(i)}|!} \prod_{i \in \sigma} \alpha_i = 1.
	\end{equation}
\end{thm}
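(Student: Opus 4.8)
The plan is to establish Theorem \ref{bratstructure} by identifying the combinatorial Martin boundary of $\mathcal{P}$ through the asymptotics in \eqref{hook2}, then invoking the general correspondence between ergodic central measures and the Martin boundary reviewed in Appendix \ref{appendix} (and recalled in Section \ref{sec:central}). First I would recall that, by definition \eqref{Martin}, an infinite path $(\tau_n)_{n\geq 0}$ is regular precisely when the Martin kernels $K(\tau,\zeta) = \lim_n d(\tau,\tau_n)/d(\varnothing,\tau_n)$ exist for all $\tau \in \mathcal{P}$. Using \eqref{hook2}, for a fixed $\tau$ of size $k$ one has
\begin{equation}
\frac{d(\tau,\tau_n)}{d(\varnothing,\tau_n)} = \frac{\prod_{i\in\tau}|\tau_n^{(i)}|}{n(n-1)\cdots(n-k+1)} = \prod_{i\in\tau}\frac{|\tau_n^{(i)}|}{n}\cdot\big(1+o(1)\big),
\end{equation}
valid whenever $\tau \subset \tau_n$, which holds for all $n$ large since $(\tau_n)$ is increasing. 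Hence the limit exists for every $\tau$ if and only if $|\tau_n^{(i)}|/n$ converges for every $i \in \mathcal{I}$ — note it suffices to check this on singletons-augmented trees, i.e.\ for each individual $i$, because a general $\tau$ is a finite union of its elements and the product of convergent sequences converges. This proves \eqref{asymp} and simultaneously gives $K(\tau,\zeta) = \prod_{i\in\tau}\alpha_i$, so the extremal NNHF attached to the boundary point is $\varphi(\tau)=\prod_{i\in\tau}\alpha_i$ as claimed in \eqref{NNHFtree}.

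Next I would verify the constraints \eqref{eq:labelgenealogical}. The normalization $\alpha_\emptyset = 1$ is immediate since $\emptyset \in \tau_n$ for all $n$ and $|\tau_n^{(\emptyset)}| = |\tau_n| = n+1$, so $|\tau_n^{(\emptyset)}|/n \to 1$. The relation $\alpha_i = \sum_{n\in\mathbb{N}}\alpha_{in}$ comes from the harmonicity equation \eqref{harmonicc}: applying it to $\varphi(\tau)=\prod_{j\in\tau}\alpha_j$ with $w\equiv 1$ and using that the out-neighbors of $\tau$ are exactly the trees $\tau\sqcup\{ij\}$, one gets $\prod_{j\in\tau}\alpha_j = \sum_{ij\notin\tau}\prod_{j'\in\tau}\alpha_{j'}\cdot\alpha_{ij}$, and since this must hold for all prefix trees $\tau$ (in particular for $\tau$ containing a fixed $i$ but none of its children), factoring out $\prod_{j\in\tau}\alpha_j$ forces $\alpha_i = \sum_{n\geq 1}\alpha_{in}$ whenever $\alpha_i>0$; the case $\alpha_i=0$ forces $\alpha_{in}=0$ for all $n$ by non-negativity, so the identity holds in all cases. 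Conversely, any family $(\alpha_i)$ satisfying \eqref{eq:labelgenealogical} yields, via $\varphi(\tau)=\prod_{i\in\tau}\alpha_i$, a genuine NNHF (harmonicity is exactly \eqref{eq:labelgenealogical} re-read as above, and $\varphi(\varnothing)=\alpha_\emptyset=1$), and one checks it is extremal because it is a Martin kernel — so the correspondence $\partial\mathcal{P}\leftrightarrow\{(\alpha_i): \eqref{eq:labelgenealogical}\}$ is a bijection. The ergodic central Markov kernel is then read off from $p(\tau,\sigma)=w(\tau,\sigma)\varphi(\sigma)/\varphi(\tau) = \prod_{i\in\sigma}\alpha_i/\prod_{i\in\tau}\alpha_i = \alpha_{ij}$ when $\sigma=\tau\sqcup\{ij\}$.

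Finally, the combinatorial identity \eqref{comb2} follows by specializing \eqref{combipower} (the precise version of \eqref{combipower0}) to the BD $\mathcal{P}$ with $w\equiv 1$: there $\#\{\varnothing\to\sigma\} = d(\varnothing,\sigma) = n!/\prod_{i\in\sigma}|\sigma^{(i)}|!$ for $\sigma\in\mathcal{P}_n$ (from \eqref{hooktree} with $\tau=\varnothing$), and $\varphi(\sigma) = \prod_{i\in\sigma}\alpha_i$, so $\sum_{\sigma\in\mathcal{P}_n}\#\{\varnothing\to\sigma\}\varphi(\sigma)=1$ becomes exactly \eqref{comb2}. I expect the main obstacle to be the careful handling of the boundary/degenerate cases: arguing that convergence on all of $\mathcal{P}$ reduces to convergence on individuals, dealing rigorously with vanishing $\alpha_i$ in the harmonicity reduction, and confirming extremality of the product NNHFs (rather than merely that they are NNHFs) via the general Martin boundary machinery of the Appendix — the analytic content is light, but one must be precise that \eqref{hook2} gives a clean product and that no interchange-of-limits issue arises since each $\tau$ is eventually contained in $\tau_n$.
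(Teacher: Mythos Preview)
Your proposal is correct and follows precisely the approach the paper intends: the paper gives no explicit proof of this theorem, stating only that it follows from the hook-length ratio \eqref{hook2} together with the Martin boundary machinery of Appendix~\ref{appendix}, and you have fleshed out exactly that argument. The only places requiring a little more care are the ones you already flag---extracting the individual limits $\alpha_i$ from the product form of the Martin kernel (which goes through by induction on $|i|$, using $|\tau_n^{(i)}|\le |\tau_n^{(j)}|$ for ancestors $j$ to handle the degenerate case), and the surjectivity of the parametrization---but these are routine.
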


This allows us to reformulate this theorem as a multivariate analog of a hook length formula for trees, which can be viewed as another kind of multinomial expansion.
\begin{coro}
Let $\partial \tau$ denote the set of leaves of a prefix tree $\tau \in \mathcal P$.
For any $n\geq 0$ and for any vector of complex numbers $(\alpha_i)_{i \in \partial \tau}$, we have the following identity:
\begin{equation}
\sum_{\substack{\sigma \in \mathcal P_n\\ \sigma \subset \tau}} \left(
	\prod_{i \in \sigma} \dfrac{1}{|\sigma^{(i)}|}
	\sum_{j \in \tau^{(i)} \cap \partial \tau} \alpha_j
\right) = \dfrac{1}{n!} \left(
	\sum_{i \in \partial \tau} \alpha_i
\right)^n.
\end{equation}
\end{coro}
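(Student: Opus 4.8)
The plan is to obtain the identity by specialising to a suitable harmonic labelling the master combinatorial identity (\ref{combipower0}) --- equivalently (\ref{comb2}) in Theorem~\ref{bratstructure} --- and then rescaling. For the Bratteli diagram $\mathcal P$ this identity reads $\sum_{\sigma\in\mathcal P_n} d(\varnothing,\sigma)\,\varphi(\sigma)=1$ for every non-negative harmonic function $\varphi$; moreover $\varphi(\sigma)=\prod_{i\in\sigma}\gamma_i$ is harmonic exactly when the labelling $\gamma=(\gamma_i)_{i\in\mathcal I}$ satisfies $\gamma_\varnothing=1$ and $\gamma_i=\sum_{k}\gamma_{ik}$ for all $i$ (this is (\ref{eq:labelgenealogical})), and $d(\varnothing,\sigma)=|\sigma|!/\prod_{i\in\sigma}|\sigma^{(i)}|$. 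The first point to record is that, for fixed $n$, the resulting identity $\sum_{\sigma\in\mathcal P_n}\prod_{i\in\sigma}\gamma_i/|\sigma^{(i)}|=1/n!$ stays valid for \emph{complex} labellings obeying (\ref{eq:labelgenealogical}): both sides are finite sums, polynomial in the free branching ratios carried by the nodes of depth $<n$, and the relation already holds on the Zariski-dense set of probability labellings of Theorem~\ref{bratstructure}; alternatively one re-proves it directly by induction on $n$ from $d(\varnothing,\sigma)=\sum_{\tau'\nearrow\sigma}d(\varnothing,\tau')$.

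Next I would build the labelling out of the leaf data. Write $B_i=\sum_{j\in\tau^{(i)}\cap\partial\tau}\alpha_j$ for the total weight of the leaves of $\tau$ lying below $i$, and $S=B_\varnothing=\sum_{\ell\in\partial\tau}\alpha_\ell$; assume first $S\neq0$. Set $\gamma_i=B_i/S$ on the nodes of $\tau$, so that $\gamma_\varnothing=1$ and $\gamma_\ell=\alpha_\ell/S$ at a leaf, and continue $\gamma$ below each leaf $\ell$ of $\tau$ by letting the value $\alpha_\ell/S$ travel unchanged along one fixed infinite ray issued from $\ell$, setting $\gamma\equiv0$ at every remaining node of $\mathcal I$. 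One checks directly that $\gamma$ satisfies (\ref{eq:labelgenealogical}): at an internal node of $\tau$ the leaves below $i$ split according to the child of $i$ through which they pass; along a leaf-ray a single child carries the whole mass; off the support both sides vanish. Feeding this $\gamma$ into the complex form of the master identity and multiplying by $S^n$ (one factor $S$ per node, since $|\sigma|=n$) yields $S^n/n!=\sum_{\sigma}\prod_{i\in\sigma}(S\gamma_i)/|\sigma^{(i)}|$, the sum running over the prefix trees of size $n$ contained in the support of $\gamma$, with $S\gamma_i=B_i$ when $i\in\tau$ and $S\gamma_i=\alpha_\ell$ along the ray below $\ell$.

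It then remains to match this with the stated identity. Grouping the last sum according to $\sigma_0=\sigma\cap\tau$ and the lengths of the ray-segments appended below those leaves of $\sigma_0$ that belong to $\partial\tau$, one resums over these lengths and checks that the auxiliary rays restore precisely the nodes absent from a genuine size-$n$ prefix subtree of $\tau$, so that the right-hand side collapses onto $\sum_{\sigma\subset\tau,\,|\sigma|=n}\prod_{i\in\sigma}B_i/|\sigma^{(i)}|$, which is exactly the left-hand side of the statement. A cleaner variant, bypassing this resummation, is to carry out the whole argument on the infinite prefix tree obtained from $\tau$ by the ray-continuation, reading $\partial\tau$ as its set of ends. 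Finally the case $S=0$ follows by continuity: for each $n$ both sides are polynomials in the finitely many relevant $\alpha_\ell$, and the identity is already established on the dense set $\{S\neq0\}$.

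The step I expect to be the real obstacle is this last matching --- showing that the rays glued below the leaves, once their lengths are summed out, reconstitute exactly the subtree-size weights $1/|\sigma^{(i)}|$ of the honest size-$n$ subtrees of $\tau$, which forces one to track carefully the cross-terms coming from ancestors common to several extended leaves. Everything else --- the harmonicity of $\gamma$, the hook-length formula for $d(\varnothing,\sigma)$, the passage to complex coefficients, and the degenerate case $S=0$ --- is a routine verification or a soft argument.
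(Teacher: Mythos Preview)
Your instinct that the matching step is the real obstacle is exactly right --- in fact that step cannot be completed, because the identity as stated is false. Take $\tau=\{\emptyset,1,2\}$ with $\alpha_1=\alpha_2=1$, so that $B_\emptyset=2$ and $B_1=B_2=1$. For $n=3$ the only size-$3$ prefix subtree of $\tau$ is $\tau$ itself, and it contributes $\tfrac{B_\emptyset}{3}\cdot B_1\cdot B_2=\tfrac{2}{3}$, whereas the right-hand side is $2^3/3!=\tfrac{4}{3}$. In your ray construction the missing $\tfrac{2}{3}$ is carried precisely by the trees $\{\emptyset,1,11\}$ and $\{\emptyset,2,21\}$, each worth $\tfrac{1}{3}$; these do not resum into anything supported on $\tau$. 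So your argument is sound right up to the flagged step: the ray-extended labelling $\gamma$ genuinely satisfies (\ref{eq:labelgenealogical}), and (\ref{comb2}) legitimately yields $S^n/n!$ --- but for the sum over size-$n$ subtrees of the ray-extension, not of $\tau$.

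The paper's own argument is shorter: it extends $\alpha$ by zero outside $\tau$ and invokes (\ref{comb2}) directly, then restricts the sum to $\sigma\subset\tau$ since the other terms vanish. This hides the same defect rather than avoiding it: that extension violates (\ref{eq:labelgenealogical}) at every leaf $\ell\in\partial\tau$ (there $\alpha_\ell\neq 0=\sum_k\alpha_{\ell k}$), so the hypothesis of (\ref{comb2}) is not met, and indeed in the example above one finds $\sum_{\sigma\in\mathcal P_3}\prod_{i\in\sigma}\alpha_i/|\sigma^{(i)}|=\alpha_1\alpha_2/3$, which after normalising $\alpha_1+\alpha_2=1$ is at most $1/12$, not $1/6$. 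Your ray construction is thus the more careful of the two routes, and what it actually proves is the correct version of the identity --- the one with the sum taken over subtrees of the ray-completion of $\tau$ rather than of $\tau$ alone.
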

\begin{proof}
The statement follows from \eqref{comb2} by noticing that the $\alpha_i$ corresponding to each node can be expressed via the $\alpha_i$ of leaf vertices, and by dropping  $\alpha_{\emptyset} = 1$ to ensure homogeneity. 

More precisely, assume that $(\alpha_{i})_{i\in\partial \tau}$ is a non-negative family and let $(\alpha_{i})_{i\in\mathcal I}$ the unique extension  characterized by the  boundary values $(\alpha_{i})_{i\in\partial \tau}$, $\alpha_i=0$ for all $i\notin \tau$, and the relations: 
\begin{equation}
\forall i\in\overset{\circ}{\tau} := \{i \in \tau \mid \exists j \in \mathbb{N}, ij \in \tau\},\quad \alpha_i=\sum_{k\in\mathbb N}\alpha_{ik}.
\end{equation}
Note that  $\alpha_i=\sum_{j\in \tau^{(i)}\cap \partial \tau}\alpha_j$ for all $i\in\tau$. In particular, $\alpha_\emptyset=\sum_{j\in \partial \tau}\alpha_j$ and one can assume without lose of generality that $\alpha_\emptyset=1$ by homogeneity, in such way that \eqref{comb2} applies. 

Finally, because $\alpha_i=0$ as soon as $i\notin \tau$, one can restrict the sum over $\sigma\subset \tau$. Since this identity is true for positive values of $\alpha_i$, and both sides are multivariate polynomials, it holds for all complexes values as well.
\end{proof}

\subsection{Underlying skeleton associated with a boundary point}

For any $\alpha \in \partial \mathcal{P}$, we denote by $\mu_\alpha$ the corresponding saturated ergodic central measure and by $\mathbb{S}_\alpha$ its the support (again, we refer to Appendix \ref{appendix} for formal definitions). 

\begin{defi}
We call the support of $\alpha$, that is $\mathcal{S}_\alpha=\{i\in \mathcal I : \alpha_i>0\}$, the skeleton of $\mu_\alpha$ or, equivalently, the skeleton of the associated saturated ergodic central Markov chain $\mu_\alpha$. 	
\end{defi}
 
Roughly speaking, this infinite prefix tree serves as the skeleton upon which the trees given by the saturated ergodic central Markov chain, denoted by $(\tau_n)_{n \geq 0}$, evolves.

In particular, by using \eqref{asymp}, one has 
\begin{equation}
\mu_\alpha\left(\bigcup_{n=0}^\infty \uparrow \tau_n = \mathcal{S}_\alpha\right) = 1\quad\text{and}\quad \mathbb S_\alpha=\{\tau\in\mathcal P : \tau\subset \mathcal S_\alpha\}.
\end{equation}

One can ckeck that $\mathcal S_\alpha$ is an infinite prefix tree such that  	
\begin{equation}\label{skeleton}
\forall i\in\mathcal S_\alpha,\;\exists k\in\mathbb N,\; ik\in\mathcal S_\alpha. 
\end{equation}

Reciprocally,  given an infinite prefix tree $\mathcal S$ satisfying \eqref{skeleton}, one can easily check that there exists at least one $\alpha\in\mathcal P$ such that  $\mathcal S_\alpha=\mathcal S$. This motivates the following definition.

\begin{defi}
An infinite prefix tree satisfying \eqref{skeleton} is called a skeleton. 
\end{defi} 
 
\subsection{Correspondence with fragmentation processes}

There exists a correspondence between the boundary $\partial \mathcal{P}$ and fragmentation processes. 

To clarify this relationship, let $\mathcal S$ be an arbitrary   skeleton and consider $(\nu_i)_{i \in \mathcal{\mathcal S}}$ a family of probability measures where, for each $i \in \mathcal{S}$, 
\begin{equation}
{\rm supp}(\nu_i)=\{j\in \mathcal S : \exists k\in\mathbb N,\, j=ik\}.
\end{equation}
Then, one can associate an element $\alpha \in \partial \mathcal{P}$, having $\mathcal S$ for skeleton, by setting $\alpha_\emptyset = 1$ and, 
\begin{equation}\label{frag}
\forall i \in \mathcal{S},\; \forall j \in {\rm supp}(\nu_i),\; \alpha_j := \nu_i(j) \alpha_i.
\end{equation}

Reciprocally, given  $\alpha\in\mathcal P$ arbitrary, the construct the corresponding family of framentation measures $(\nu_{i})_{i\in\mathcal S_\alpha}$, it suffices to set for all $i\in\mathcal S_\alpha$ and $j\in\mathcal S_\alpha$ a child of $i$,  $\nu_i(j)=\alpha_{j}/\alpha_i$

Also, introducing $C_i\subset\mathbb N$ as the set of $k\in\mathbb N$ such that $ik\in\mathcal S$, each probability measure $\nu_i$ can be seen as a probability measure on $\mathbb N$ whose support is $C_i$ by setting $\nu_i(k):=\nu_i(ik)$. 

We refer to Figure \ref{Frag} for an illustration. 

\begin{figure}[H]
	\centering
	\includegraphics[width=0.65\textwidth]{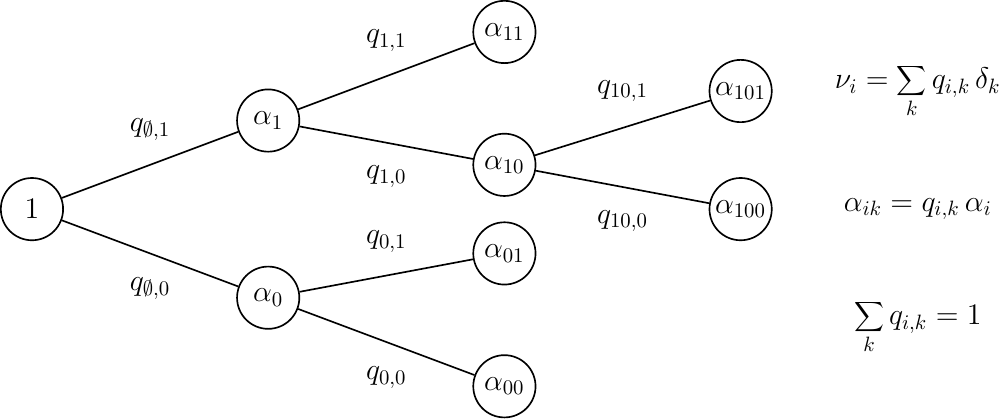}
	\captionsetup{width=0.8\textwidth}
	\caption{Correspondence between the labels  $(\alpha_i)_{i\in \mathcal I}$ and the fragmentation probability measures $(\nu_i)_{i\in\mathcal I}$.}
	\label{Frag}
\end{figure}

\subsection{About MERWs}

Since $d(\varnothing, \mathcal{P}_n) = \infty$ for all $n \geq 1$, Assumption \ref{Ass2} is not satisfied, thereby precluding the existence of a MERW on the BD of prefix trees. To address this issue, we propose two alternatives:
\begin{enumerate}
	\item Introduce a weight structure on $\mathcal{P}$ such that Assumption \ref{Ass2} is fulfilled.
	\item Select a skeleton $\mathcal{S}$ where $d(\varnothing, \mathcal{P}_n^{\mathcal{S}}) < \infty$ for all $n \geq 1$, with  
	\begin{equation}
	\mathcal{P}_n^{\mathcal{S}} = \{\tau \in \mathcal{P}_n : \tau \subset \mathcal{S}\}.
	\end{equation}
	Then, investigate the MERWs on $\mathcal{P}^{\mathcal{S}} = \bigsqcup_{n \geq 0} \mathcal{P}_{n+1}^{\mathcal{S}}$. In this scenario, we also refer to MERWs evolving in the skeleton $\mathcal{S}$, which can be seen as an additional constraint.
\end{enumerate}

\section{The example of $d$-ary trees}

\setcounter{equation}{0}

In this section, we  present several examples involving $d$-ary trees, that is, the case when the underlying skeleton is given by the complete infinite $d$-ary tree 
\begin{equation}
\mathcal{S}_d = \{i=(i_1i_2\cdots) \in \mathcal{I} : \forall 1 \leq k \leq |i|, \; 1 \leq i_k \leq d\}.
\end{equation}
Equivalently, we assume in this section that the trees grow into $\mathcal S_d$.

%
%
%
%

\label{sec:examples}

\subsection{Uniform fragmentation measure} \label{sec:uniffrag}

We first consider the basic model obtained when the fragmentation probability measures (\ref{frag}) are uniform over $\{1, \cdots, d\}$.

One can easily check that the corresponding boundary point $\alpha \in \partial \mathcal{S}_d$ is given for any $i \in \mathcal{S}_d$ by $\alpha_i = d^{-|i|}$. Note that for any finite $d$-ary tree $\tau$, 
\begin{equation}\label{newphi}
\prod_{i\in\tau}\alpha_i=\prod_{i\in\tau} \frac{1}{d^{|\tau^{(i)}|-1}}.
\end{equation}

This model of tree growth is discussed in \cite{Pittel1} and also appears in \cite{IDLAtree, Aldous} as a specific instance of an Internal Diffusion-Limited Aggregation (IDLA) process on a tree.

Surprisingly, the general combinatorial identity (\ref{comb2}) (with the help of \eqref{newphi}) enables us to derive, as a special case, Han's hook length formula for binary trees, as found in \cite{Han1}.

\begin{coro}\label{Han-d}
	Let $\mathcal{B}_d(n)$ denote the set of all $d$-ary trees with $n$ vertices. Then 
	\begin{equation}\label{comb3}
	\sum_{\tau \in \mathcal{B}_d(n)} \frac{n!}{\displaystyle \prod_{v \in \tau} |\tau^{(v)}| d^{|\tau^{(v)}|-1}} = 1.
	\end{equation}
\end{coro}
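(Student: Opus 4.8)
The plan is to derive \eqref{comb3} as a direct specialization of the master identity \eqref{comb2}, using the explicit boundary point $\alpha$ coming from the uniform fragmentation measure on $\mathcal{S}_d$. First I would recall from the setup that in this section all prefix trees are constrained to grow inside the complete infinite $d$-ary skeleton $\mathcal{S}_d$, so that $\mathcal{P}_n^{\mathcal{S}_d}$ is exactly the set of $d$-ary trees with $n$ vertices, i.e. $\mathcal{B}_d(n)$. By Theorem~\ref{bratstructure} applied to $\mathcal{P}^{\mathcal{S}_d}$, the identity \eqref{comb2} holds with the labels $(\alpha_i)_{i\in\mathcal{S}_d}$ for any boundary point $\alpha\in\partial\mathcal{S}_d$; I would take the one associated with the uniform fragmentation, for which $\alpha_i = d^{-|i|}$ as noted in the paragraph preceding the corollary, and which indeed satisfies \eqref{eq:labelgenealogical} on $\mathcal{S}_d$ (each node has exactly $d$ children, $d\cdot d^{-(|i|+1)} = d^{-|i|}$).

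The core computation is then just the rewriting of the product $\prod_{i\in\tau}\alpha_i$ for a finite $d$-ary tree $\tau$. Using \eqref{newphi},
\begin{equation}
\prod_{i\in\tau}\alpha_i = \prod_{i\in\tau}\frac{1}{d^{|\tau^{(i)}|-1}},
\end{equation}
which I should briefly justify: $\sum_{i\in\tau}(|\tau^{(i)}|-1) = \sum_{i\in\tau}|\tau^{(i)}| - |\tau|$, and since $\sum_{i\in\tau}|\tau^{(i)}|$ counts pairs $(i,j)$ with $j$ a descendant of $i$ in $\tau$ (including $i=j$), this equals $\sum_{i\in\tau}|i|+|\tau| = \sum_{i\in\tau}|i| + |\tau|$... more simply, $\sum_{i\in\tau}(|\tau^{(i)}|-1)$ equals the total path length $\sum_{i\in\tau}|i|$, which is exactly $-\log_d\prod_{i\in\tau}\alpha_i = \sum_{i\in\tau}|i|$; either bookkeeping identity gives \eqref{newphi}. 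Substituting this into \eqref{comb2} with $\mathcal{P}_n$ replaced by $\mathcal{P}_n^{\mathcal{S}_d} = \mathcal{B}_d(n)$ yields
\begin{equation}
\sum_{\tau\in\mathcal{B}_d(n)}\frac{n!}{\prod_{i\in\tau}|\tau^{(i)}|!}\cdot\prod_{i\in\tau}\frac{1}{d^{|\tau^{(i)}|-1}} = 1,
\end{equation}
and it remains only to observe that for a $d$-ary tree every node has $|\tau^{(i)}|$ ranging over $\{1,\dots\}$ but the factorials combine with the powers of $d$: actually the statement keeps $|\tau^{(i)}|$ rather than $|\tau^{(i)}|!$ in the denominator, so the last step is to note that, thinking of the underlying rooted tree structure, the displayed formula already is \eqref{comb3} once one writes $\prod_i |\tau^{(i)}|!$ in the form appearing there — here I would double-check the precise bookkeeping matching $\prod|\tau^{(i)}|! $ against $\prod |\tau^{(v)}|$ using \eqref{hooktree}, since $d(\varnothing,\tau) = |\tau|!/\prod_{i}|\tau^{(i)}|$ (not the factorial), which is exactly why the corollary displays $|\tau^{(v)}|$ and not $|\tau^{(v)}|!$.

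The main obstacle, and the only place where care is genuinely needed, is reconciling the two slightly different hook formulas floating around in the excerpt: $d(\varnothing,\sigma) = |\sigma|!/\prod_i|\sigma^{(i)}|$ versus the expression $|\sigma\setminus\tau|!/\prod_{i}|\sigma^{(i)}|!$ in \eqref{hooktree}, and making sure that the weight $\prod_{i\in\sigma}|\sigma^{(i)}|!$ appearing in \eqref{comb2} is correctly converted. Concretely, \eqref{comb2} reads $\sum_{\sigma\in\mathcal{P}_n} \frac{n!}{\prod_{i\in\sigma}|\sigma^{(i)}|!}\prod_{i\in\sigma}\alpha_i = 1$; when we restrict to $d$-ary trees and plug in $\prod\alpha_i = \prod d^{-(|\sigma^{(i)}|-1)}$, a root node and internal nodes with fewer than $d$ children contribute factorials $|\sigma^{(i)}|!$ that must be tracked. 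I would therefore verify that $\frac{n!}{\prod_{i\in\sigma}|\sigma^{(i)}|!}\prod_{i\in\sigma}\frac{1}{d^{|\sigma^{(i)}|-1}}$ indeed simplifies to $\frac{n!}{\prod_{v\in\sigma}|\sigma^{(v)}|\, d^{|\sigma^{(v)}|-1}}$, which holds provided one reads $|\sigma^{(v)}|$ in \eqref{comb3} as $|\sigma^{(v)}|!$ normalized appropriately — i.e. the corollary as stated is consistent with \eqref{comb2} once the per-node combinatorial weight is computed from the increasing-labeling count rather than the ordered one. Modulo fixing this notational convention, the proof is a one-line substitution, so I would present it as such, flagging the bookkeeping identity for $\prod\alpha_i$ and the specialization $\mathcal{P}_n^{\mathcal{S}_d}=\mathcal{B}_d(n)$ as the two things that need a sentence each.
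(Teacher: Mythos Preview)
Your approach is exactly the paper's: specialize \eqref{comb2} to the skeleton $\mathcal{S}_d$ with the uniform boundary point $\alpha_i=d^{-|i|}$, use \eqref{newphi}, and read off \eqref{comb3}. That is the entire intended argument.

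The ``main obstacle'' you describe is not a mathematical issue but a typo in the paper: the factorials in the denominators of \eqref{hooktree} and \eqref{comb2} are spurious. The correct hook-length formula is the one you quote just above, $d(\varnothing,\sigma)=|\sigma|!/\prod_{i\in\sigma}|\sigma^{(i)}|$, and the general version is $d(\tau,\sigma)=|\sigma\setminus\tau|!/\prod_{i\in\sigma\setminus\tau}|\sigma^{(i)}|$ (no factorial); you can check this against \eqref{hook2}, which is only consistent with the non-factorial version. Consequently \eqref{comb2} should read
\[
\sum_{\sigma\in\mathcal{P}_n}\frac{n!}{\prod_{i\in\sigma}|\sigma^{(i)}|}\prod_{i\in\sigma}\alpha_i=1,
\]
and then \eqref{comb3} follows by direct substitution of \eqref{newphi}, with $\mathcal{P}_n^{\mathcal{S}_d}=\mathcal{B}_d(n)$. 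There is nothing to reconcile; simply drop the exclamation marks and your one-line proof is complete.
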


\subsection{Dirichlet random environment.} 

\label{sec:randomenv}

Let us consider $(U_{i,1}, \cdots, U_{i,d})_{i \in \mathcal{S}_d}$ a family of \emph{i.i.d.\@} Dirichlet random variables with positive parameters $a_1, \cdots, a_d$: \emph{i.e.\@} having for density on the $(d-1)$-simplex 
\begin{equation}
\{(x_1,\cdots,x_d): x_1+\cdots+x_d=1,\; x_1,\cdots,x_d\geq 0\},
\end{equation}
the function
\begin{equation}\label{density}
\frac{1}{\mathbf B(a_1,\cdots,a_d)}
\prod_{i=1}^d x_i^{a_i-1}, 
\end{equation}
where $\mathbf B$ denotes the usual multivariate beta function. 

Then define the random fragmentation probability measures by
\begin{equation}
\nu_i(\omega) = \sum_{j=1}^d U_{i,j}(\omega) \delta_{ij}, \quad i \in \mathcal{S}_d.
\end{equation}

It corresponds to these random fragmentation measures  a random point $\alpha(\omega)$ in the Martin boundary $\partial \mathcal{S}_d$ (see Figure \ref{Frag}). Specifically, for all $n \geq 1$ and $i_1, \cdots, i_n \in \{1, \cdots, d\}$, one can check that 
\begin{equation}
\alpha_{i_1 \cdots i_n} = U_{\emptyset, i_1} U_{i_1, i_2} U_{i_1 i_2, i_3} \cdots U_{i_1 \cdots i_{n-1}, i_n}.
\end{equation}

It follows (see \eqref{NNHFtree} and Figure \ref{Fragenv} for an example) that the corresponding random NNHF on $\mathcal{S}_d$ is given by 
\begin{equation}\label{diri}
\varphi_\omega(\tau) = \prod_{i \in \overset{\circ}{\tau}} \prod_{j=1}^d U_{i,j}(\omega)^{|\tau^{(ij)}|},
\end{equation}
where $\overset{\circ}{\tau} = \{i \in \tau \mid \exists j \in \mathbb{N}, ij \in \tau\}$ denotes the set of internal nodes of a prefix tree. 

\begin{figure}[H]
	\centering
	\includegraphics[width=0.6\textwidth]{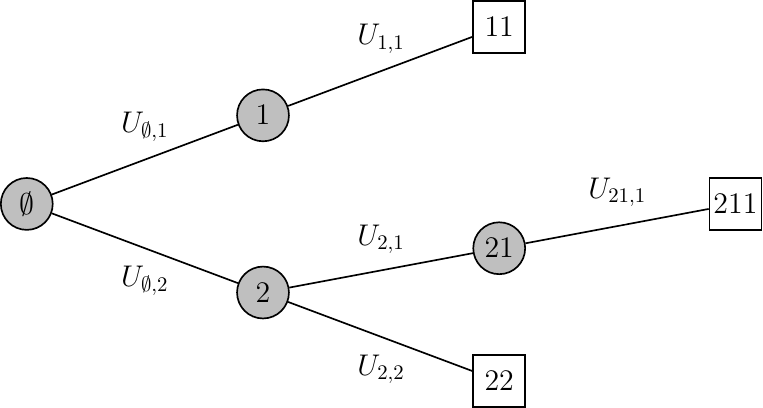}
	\captionsetup{width=0.8\textwidth}
	\caption{A binary tree $\tau$ where the leaves $\partial \tau$ are represented by squares and the internal nodes $\overset{\circ}{\tau}$ by filled circles. Here, $\varphi(\tau)=U_{\emptyset,1}^2 U_{1,1}U_{\emptyset,2}^4 U_{2,1}^2U_{21,1} U_{2,2}$.}
	\label{Fragenv}
\end{figure}

\noindent
\textit{i) Annealed environment and corresponding IDLA-like random walk.}  In light of the Pólya urn process, which can be obtained by the taking the mean over the uniform probability measure of the extremal harmonic functions on the Pascal lattice  (see Remark \ref{rem:polya}), one can consider the PHF on $\mathcal{S}_d$ defined by $\overline{\varphi} = \mathbb{E}[\varphi]$.  

Then, simple calculations leads to
\begin{equation}\label{mean}
\overline{\varphi}(\tau) = \prod_{i \in \overset{\circ}{\tau}} \frac{\mathbf B(a_1 + |\tau^{(i1)}|, \cdots, a_d + |\tau^{(id)}|)}{\mathbf B(a_1, \cdots, a_d)},
\end{equation}
and we obtain that the central Markov kernel $\overline p$ associated with $\overline \varphi$ is given by 
\begin{multline}\label{transidla}
\overline p(\tau, \tau \sqcup \{j_1 \cdots j_{k-1}j_k\}) \\
=\frac{a_{j_1} + |\tau^{(j_1)}|}{A + |\tau| - 1} \frac{a_{j_2} + |\tau^{(j_1 j_2)}|}{A + |\tau^{(j_1)}| - 1} \cdots 
\frac{a_{j_2} + |\tau^{(j_1\cdots j_{k-1})}|}{A + |\tau^{(j_1\cdots j_{k-2})}| - 1}
\frac{a_{j_k}+0}{A + |\tau^{(j_1 \cdots j_{k-1})}| - 1},
\end{multline}
for every $k \geq 1$ and $j_1, \cdots, j_k \in \{1, \cdots, d\}$ such that $j_1 \cdots j_{k-1} \in \tau$ and $j_1 \cdots j_{k-1} j_k \notin \tau$, and with $A=a_1+\cdots+a_d$. These transition probabilities  have an IDLA-like interpretation.

Let $(\tau_n)_{n\geq 0}$ be the corresponding central Markov chain on $\mathcal S_d$. To obtain $\tau_{n+1}$ from $\tau_n$, it suffices to perform an appropriate  RW on $\mathcal{S}_d$, starting from the root, until exiting $\tau_n$, and then add the newly visited node. More precisely, at each node $i$ of $\tau_n$, we have to choose a child $ij$, $j\in\{1,\cdots,d\}$, with a probability proportional to $a_j + |\tau^{(ij)}_n|$. Recall that $\tau^{(ij)}_n=\emptyset$ when $ij\notin \mathcal \tau_n$, that is $|\tau^{(ij)}_n|=0$. We refer to Figure \ref{Fragidela}.\\ 

\begin{figure}[H]
	\centering
	\includegraphics[width=0.55\textwidth]{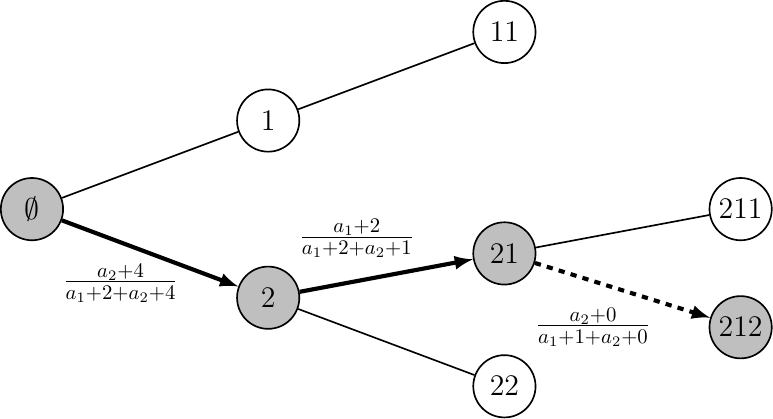}
	\captionsetup{width=0.8\textwidth}
	\caption{IDLA-like random walk. Example with $d=2$ (binary trees). The probability to add the node $212$ is given by the product of the transition probabilities appearing on the directed edges.}
	\label{Fragidela}
\end{figure}

\noindent
\textit{ii) The $d$-ary search tree process.} In the case when the $a_j$ above are all equal to $1/(d-1)$ and $|\tau| = n$, we can rewrite (\ref{transidla}) as 
\begin{equation}\label{MERWdary}
p(\tau, \tau \sqcup \{v\}) = \frac{1}{1 + (d-1)n},
\end{equation}
for any of the $1 + (d-1)n$ available vertices $v$ such that $\tau \nearrow \tau \sqcup \{v\}$. 

The case when $d = 2$ corresponds to the usual Binary Search Tree (BST) process and so we refer to this process as the $d$-ary search tree process.

By using Theorem \ref{convcentral}, we can derive directly some well-known asymptotics which can be found in \cite[Chap. 6.2.2]{chauvin}. 
\begin{coro}\label{coro:BST}
Let $(\tau_n)_{n\geq 0}$ be the $d$-ary search tree process. For any $i \in \mathcal{S}_d$ and $j \in \{1, \ldots, d\}$,
\begin{equation}\label{BSTasymp}
\lim_{n \to \infty} \frac{|\tau^{(ij)}_n|}{|\tau^{(i)}_n|} \overset{a.s.}{=} U_{i,j},
\end{equation}	
where the $(U_{i,1}, \cdots, U_{i,d})_{i \in \mathcal{S}_d}$ a family of \emph{i.i.d.\@} Dirichlet random variables with parameters 
\begin{equation*}
a_1=\cdots=a_d=\frac{1}{d-1}.
\end{equation*}

\end{coro}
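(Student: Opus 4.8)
The plan is to recognize Corollary \ref{coro:BST} as a direct application of the structure established in Section \ref{section:tree}, combined with the identification of the $d$-ary search tree process made just above in equation (\ref{MERWdary}). First I would recall that the $d$-ary search tree process $(\tau_n)_{n\geq 0}$ is, by (\ref{transidla}) with all $a_j = 1/(d-1)$, the central Markov chain on $\mathcal{S}_d$ associated with the PHF $\overline{\varphi} = \mathbb{E}[\varphi]$, where $\varphi$ is the random NNHF of (\ref{diri}) built from an i.i.d.\ family of Dirichlet$(1/(d-1),\dots,1/(d-1))$ variables $(U_{i,1},\dots,U_{i,d})_{i\in\mathcal{S}_d}$. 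The key observation is that this averaged central measure is the mixture $\overline{\mu} = \int \mu_{\alpha(\omega)}\, \mathbb{P}(d\omega)$ over ergodic central measures indexed by the random boundary point $\alpha(\omega)\in\partial\mathcal{S}_d$ determined by the $U_{i,j}$.

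The main step is then to invoke the ergodic decomposition / convergence theorem cited as Theorem \ref{convcentral}: for a central Markov chain whose law is the mixture of ergodic central measures $\mu_\alpha$ against a measure on the boundary, the trajectory $(\tau_n)$ almost surely converges (in the Martin topology) to a random boundary point, and conditionally on that limit point being $\alpha$, the empirical quantities $|\tau_n^{(i)}|/n$ converge to $\alpha_i$ by (\ref{asymp}) of Theorem \ref{bratstructure}. Here one must check that the law of the limit boundary point under $\overline{\mu}$ is precisely the law of $\alpha(\omega)$ — this follows because the mixing measure in the ergodic decomposition is unique, so the a posteriori boundary distribution agrees with the a priori Dirichlet construction. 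Consequently, almost surely, for every $i\in\mathcal{S}_d$ and $j\in\{1,\dots,d\}$,
\begin{equation*}
\frac{|\tau_n^{(ij)}|}{|\tau_n^{(i)}|} = \frac{|\tau_n^{(ij)}|/n}{|\tau_n^{(i)}|/n} \longrightarrow \frac{\alpha_{ij}}{\alpha_i} = U_{i,j},
\end{equation*}
where the last equality is exactly the fragmentation relation (\ref{frag}), i.e.\ $\alpha_{ij} = U_{i,j}\,\alpha_i$, valid on the skeleton (and one notes $\alpha_i > 0$ almost surely since the Dirichlet variables are a.s.\ positive, so the ratio is well-defined). This gives (\ref{BSTasymp}).

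The step I expect to be the main obstacle is the careful justification that the boundary-point law arising from the ergodic decomposition of $\overline{\mu}$ coincides with the explicit i.i.d.\ Dirichlet law on $(U_{i,j})$ — i.e.\ that the "annealed" construction and the "a posteriori" limit genuinely match. This requires knowing that $\overline{\varphi}$ is the integral of the extremal NNHFs $\varphi_\alpha$ against the pushforward of $\mathbb{P}$ under $\omega\mapsto\alpha(\omega)$, which is essentially the content of (\ref{mean}) combined with (\ref{NNHFtree}) and the uniqueness of the integral representation in Corollary \ref{coro} (equation (\ref{eq:representationint})). Once this identification is in place, the remaining work — passing from $|\tau_n^{(i)}|/n\to\alpha_i$ to the ratio statement, and checking positivity — is routine. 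One should also briefly note that the convergence in (\ref{asymp}) holds simultaneously for all $i\in\mathcal{I}$ on a single almost-sure event (a countable intersection), so that the ratios are simultaneously well-behaved.
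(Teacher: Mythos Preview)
Your proposal is correct and follows exactly the approach the paper indicates: the paper's ``proof'' is the single sentence ``By using Theorem \ref{convcentral}, we can derive directly some well-known asymptotics,'' and you have faithfully unpacked what that means --- identifying the $d$-ary search tree as the annealed central Markov chain $\overline\mu=\mathbb E[\mu_{\alpha(\omega)}]$, invoking Theorem \ref{convcentral} so that $\tau_n$ converges a.s.\ to a random boundary point with law $m$, reading off $|\tau_n^{(i)}|/n\to\alpha_i$ via the regularity characterization of Theorem \ref{bratstructure}, and then taking the ratio with the fragmentation relation $\alpha_{ij}=U_{i,j}\alpha_i$. Your attention to the uniqueness of the mixing measure (which follows from \eqref{exit} at $x=\varnothing$, since $Z\sim m$) is the right way to close the loop between the a priori Dirichlet construction and the a posteriori limit law.
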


\begin{prop}\label{BST}
The unique MERW relative to the complete infinite $d$-ary tree $\mathcal{S}_d$ is the $d$-ary search tree process.
\end{prop}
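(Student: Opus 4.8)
The plan is to compute the combinatorial dimensions $d(\tau,\mathcal{P}_n^{\mathcal{S}_d})$ explicitly, take the limit to identify the (unique) NNHF $\varphi^\ast$, and recognize it as the NNHF attached to the uniform-fragmentation boundary point $\alpha$ of Section \ref{sec:uniffrag}, whose central Markov chain is precisely the $d$-ary search tree process by the computation leading to \eqref{MERWdary}. The first step is combinatorial: for a finite $d$-ary tree $\tau$ of size $k$, count the weighted number of paths in $\mathcal{P}^{\mathcal{S}_d}$ from $\tau$ to the $n$th level set, i.e. the number of ways to grow $\tau$ inside $\mathcal{S}_d$ up to size $n+1$. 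Using the hook-length formula \eqref{hooktree} for increasing labelings together with the fact that, at each step, the number of available slots in $\mathcal{S}_d$ for a tree of current size $m$ is exactly $1+(d-1)m$, one gets
\begin{equation}
d(\tau,\mathcal{P}_n^{\mathcal{S}_d}) = \sum_{\substack{\sigma\supset\tau,\ \sigma\subset\mathcal{S}_d\\ |\sigma|=n+1}} d(\tau,\sigma),
\end{equation}
and I expect this to telescope: since from a tree of size $m$ there are $1+(d-1)m$ one-step extensions, the total weighted number of length-$(n+1-k)$ growth paths starting from $\tau$ is $\prod_{m=k}^{n}\bigl(1+(d-1)m\bigr)$, independent of the shape of $\tau$ beyond its size. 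Thus $d(\tau,\mathcal{P}_n^{\mathcal{S}_d})$ depends on $\tau$ only through $|\tau|$.

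Next I would form the ratio $\varphi_n(\tau)=d(\tau,\mathcal{P}_n^{\mathcal{S}_d})/d(\varnothing,\mathcal{P}_n^{\mathcal{S}_d})$. Since $\varnothing$ has size $1$ and $\tau$ has size $k$, the telescoping gives
\begin{equation}
\varphi_n(\tau)=\frac{\prod_{m=k}^{n}\bigl(1+(d-1)m\bigr)}{\prod_{m=1}^{n}\bigl(1+(d-1)m\bigr)}=\prod_{m=1}^{k-1}\frac{1}{1+(d-1)m},
\end{equation}
which is already independent of $n$, so the limit point is unique and $\varphi^\ast(\tau)=\prod_{m=1}^{|\tau|-1}\bigl(1+(d-1)m\bigr)^{-1}$. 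By Definition \ref{def} and Proposition \ref{suppssbd} there is then a unique MERW, with full support $\mathcal{S}_d$ (every factor is positive). To finish, I would check that this $\varphi^\ast$ coincides with the NNHF of the uniform-fragmentation boundary point: from \eqref{NNHFtree} the latter is $\varphi(\tau)=\prod_{i\in\tau}\alpha_i$ with $\alpha_i=d^{-|i|}$, and a short induction on $|\tau|$ (adding a leaf at depth $|i|+1$ multiplies the product by $d^{-(|i|+1)}$ while multiplying the explicit formula by $(1+(d-1)|\tau|)^{-1}$) shows the two agree — alternatively, just verify directly that $\varphi^\ast$ satisfies the harmonicity relation \eqref{harmonicc} on $\mathcal{S}_d$, namely $\varphi^\ast(\tau)=\sum_{\tau\nearrow\sigma}\varphi^\ast(\sigma)=(1+(d-1)|\tau|)\cdot\varphi^\ast(\tau)/(1+(d-1)|\tau|)$, which is immediate. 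The associated central Markov kernel is $p(\tau,\sigma)=\varphi^\ast(\sigma)/\varphi^\ast(\tau)=1/(1+(d-1)|\tau|)$, i.e. exactly \eqref{MERWdary}, which is the $d$-ary search tree process.

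The main obstacle is the first, purely combinatorial step: justifying cleanly that the number of available extension slots for any $d$-ary tree of size $m$ inside $\mathcal{S}_d$ is exactly $1+(d-1)m$, and hence that $d(\tau,\mathcal{P}_n^{\mathcal{S}_d})$ telescopes and depends only on $|\tau|$. This is a standard leaf-counting identity for $d$-ary trees (each internal node contributes $d$ slots, each added node removes one and creates $d$, giving a net $+(d-1)$ per node plus the initial $1$), but it must be stated carefully because the hook-length formula \eqref{hooktree} a priori sums over all shapes $\sigma$; the point is that the weighted path count decouples from the final shape. Everything after that is routine: identifying $\varphi^\ast$, invoking Proposition \ref{suppssbd} and Corollary \ref{coro} for uniqueness and the central-chain description, and matching with Section \ref{sec:uniffrag} via \eqref{MERWdary}.
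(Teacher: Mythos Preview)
Your core argument is correct and essentially mirrors the paper's proof: both exploit that a $d$-ary tree of size $m$ has exactly $1+(d-1)m$ one-step extensions inside $\mathcal{S}_d$, so that the path count from $\tau$ to $\mathcal{P}_n^{\mathcal{S}_d}$ depends only on $|\tau|$. The paper phrases this as ``every path of length $n$ from the root has the same probability $\prod_{k=1}^n(1+(d-1)k)^{-1}$ under the $d$-ary search tree process, so its restriction to $\mathcal{T}_n$ is $\mu_n$''; your version computes $\varphi_n$ explicitly and observes it is already independent of $n$. These are the same observation read in opposite directions.

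One point, however, needs correction: your $\varphi^\ast(\tau)=\prod_{m=1}^{|\tau|-1}(1+(d-1)m)^{-1}$ is \emph{not} the uniform-fragmentation NNHF $\prod_{i\in\tau}d^{-|i|}$ of Section~\ref{sec:uniffrag}. The latter depends on the shape of $\tau$ through the depths $|i|$, not just on $|\tau|$; for instance with $d=2$ and $|\tau|=3$, the chain $\{\emptyset,1,11\}$ gives $1\cdot\tfrac12\cdot\tfrac14=\tfrac18$ while the balanced tree $\{\emptyset,1,2\}$ gives $\tfrac14$, yet your $\varphi^\ast$ gives $\tfrac16$ in both cases. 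Correspondingly, the uniform-fragmentation central chain has transitions $p(\tau,\tau\sqcup\{ij\})=\alpha_{ij}=d^{-|ij|}$, not the uniform $1/(1+(d-1)|\tau|)$. The $d$-ary search tree arises instead from the \emph{annealed} Dirichlet model of Section~\ref{sec:randomenv} with all $a_j=1/(d-1)$, and its NNHF is the $\overline\varphi$ of~\eqref{mean}, not an extremal one. Your ``short induction'' matching the two therefore cannot work. Fortunately your alternative route---verifying harmonicity of $\varphi^\ast$ directly and reading off the kernel $p(\tau,\sigma)=\varphi^\ast(\sigma)/\varphi^\ast(\tau)=1/(1+(d-1)|\tau|)$---is correct and is all that is needed.
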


\begin{proof}
Since at each step the transition probability (\ref{MERWdary}) depends only on the size of the current $d$-ary tree, all the paths of length $n$ starting from the root  have for probability 
\begin{equation*}
\prod_{k=1}^{n}\frac{1}{1+(d-1)k}.
\end{equation*}
Similarly to  {\it ii)} Section \ref{sec:illustra}, we obtain that the  $d$-ary search tree process is the unique  MERW.

\end{proof}

\subsection{Adding preferential weights} 

To go further, we can consider weighted $d$-ary trees where the weight assigned to adding the $j$th child of a node $i$ for the first time among the other children is denoted by $w_{i,ij}$. 

For a finite path $T\in\{\varnothing\to\tau\}$ starting from the root and ending at $\tau$, define for each $j \in \{1, \ldots, d\}$ the set $N_j$ as the collection of all internal nodes $i\in\overset{\circ}{\tau}$ for which $ij$ is the first child of $i$ appearing in the trajectory. In other words, among the children of $i$ appearing in $\tau$, $ij$ is the one with the smallest label. Observe that $N_1, \ldots, N_d$ form a partition of $\overset{\circ}{\tau}$.

The weight of $T$ can be expressed as  (see Figure \ref{Fragweights} for an example)
\begin{equation}\label{weight}
w_T = \prod_{j=1}^d \prod_{i \in N_j} w_{i,ij}.
\end{equation}

\begin{figure}[H]
	\centering
	\includegraphics[width=0.65\textwidth]{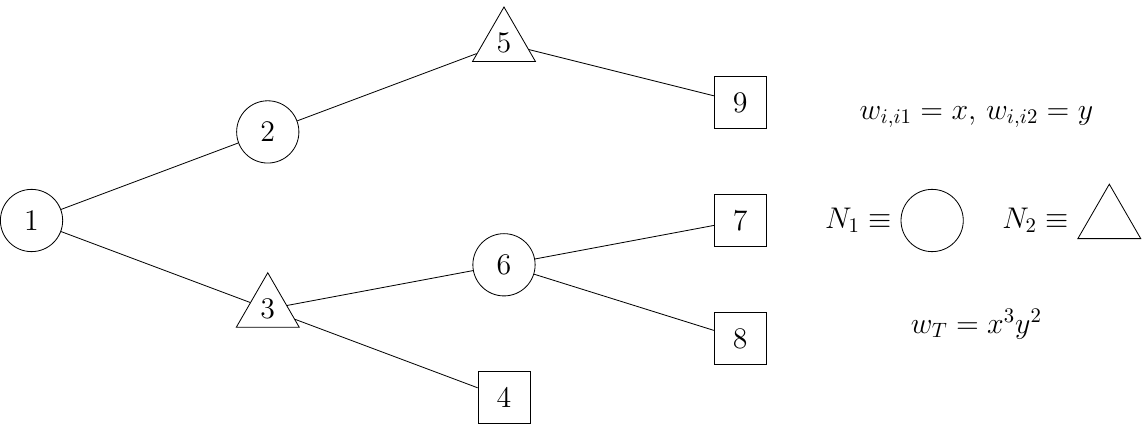}
	\captionsetup{width=0.8\textwidth}
	\caption{A path $T$ of length $8$ in the infinite rooted binary tree, where $x$ (resp. $y$) is the weight assigned when adding the first (resp. second) child of any node for the first time.
	}
	\label{Fragweights}
\end{figure}

\begin{lem}\label{lem:dimpref} For all finite $d$-ary tree $\sigma$, one has
\begin{equation}\label{countpref}
d(\varnothing, \sigma) = \frac{|\sigma|!}{\displaystyle \prod_{i \in \sigma} |\sigma^{(i)}|} \prod_{i \in \overset{\circ}{\sigma}} \frac{\sum_{j=1}^d |\sigma^{(ij)}| w_{i, ij}}{|\sigma^{(i)}| - 1}.
\end{equation}

More generally, for any $d$-arry tree $\tau$ included into $\sigma$,
\begin{equation}\label{countpref2}
d(\tau, \sigma) = \frac{|\sigma \setminus \tau|!}{\prod_{i \in \sigma \setminus \tau} |\sigma^{(i)}|} \prod_{i \in (\overset{\circ}{\sigma} \setminus \tau) \sqcup (\overset{\circ}{\sigma} \cap \partial \tau)} \frac{\sum_{j=1}^d |\sigma^{(ij)}| w_{i, ij}}{|\sigma^{(i)}| - 1}.
\end{equation}
\end{lem}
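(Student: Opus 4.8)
The plan is to prove \eqref{countpref2} first, and then recover \eqref{countpref} as the special case $\tau = \varnothing$. I would argue by induction on $|\sigma \setminus \tau| = n$, summing over the last edge added to the path. Concretely, for a path $T \in \{\tau \to \sigma\}$ of length $n$, let $\sigma' = \sigma \setminus \{v\}$ where $v$ is the leaf of $\sigma$ added last; then $\tau \subset \sigma' \nearrow \sigma$ and $T$ restricted to its first $n-1$ steps lies in $\{\tau \to \sigma'\}$. Summing weights,
\begin{equation}\label{eq:recstep}
d(\tau, \sigma) = \sum_{\substack{v \in \partial \sigma,\; v \notin \tau}} w(\sigma \setminus \{v\}, \sigma)\, d(\tau, \sigma \setminus \{v\}),
\end{equation}
where $\partial\sigma$ is the leaf set of $\sigma$ and $w(\sigma\setminus\{v\},\sigma)$ is the weight of the final edge, equal to $w_{i,ij}$ if $v = ij$. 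The point is that removing a leaf $v = ij$ changes exactly the factors indexed by $v$ itself and by its parent $i$: the term $|\sigma^{(v)}| = 1$ disappears, the size $|\sigma^{(i)}|$ drops by one (which also toggles whether $i$ is internal), and in the product over internal nodes the factor at $i$ must be re-examined because $ij$ may or may not remain a child.

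The main technical work — and the step I expect to be the real obstacle — is the bookkeeping in the inductive step: showing that the sum on the right-hand side of \eqref{eq:recstep}, after substituting the inductive hypothesis for each $d(\tau, \sigma\setminus\{v\})$, collapses to the claimed closed form for $d(\tau,\sigma)$. This amounts to proving an algebraic identity of the shape
\begin{equation}
\sum_{v=ij \in \partial\sigma \setminus \tau} w_{i,ij} \cdot \bigl[\text{(ratio of the formula at }\sigma\setminus\{v\}\text{ to the formula at }\sigma)\bigr] = 1,
\end{equation}
where the bracketed ratio, after cancellation, involves only local data at the parent $i$: factors like $\tfrac{|\sigma^{(i)}|-1}{|\sigma \setminus \tau|}$, $\tfrac{|\sigma^{(ij)}|}{\sum_k |\sigma^{(ik)}| w_{i,ik}}$ (or a degenerate variant when removing $ij$ makes $i$ a leaf, i.e. when $|\sigma^{(i)}| = 2$ and $ij$ is $i$'s only child in $\sigma$). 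The unweighted skeleton of this computation is precisely the classical increasing-labeling identity $\sum_{v\in\partial\sigma} |\sigma^{(v_{\text{parent}})}|^{-1}\cdot(\dots) $, already invoked above via $d(\varnothing,\sigma) = |\sigma|!/\prod_i |\sigma^{(i)}|$; the weighted version replaces the uniform choice among available leaves by a choice with probability proportional to $w_{i,ij}$ up to the combinatorial multiplicities of the subtrees, so the sum telescopes for the same structural reason. Care is needed with the index set $(\overset{\circ}{\sigma}\setminus\tau)\sqcup(\overset{\circ}{\sigma}\cap\partial\tau)$ in \eqref{countpref2}: these are exactly the nodes of $\sigma$ that are internal in $\sigma$ and at which at least one child-edge was traversed along a path in $\{\tau\to\sigma\}$ — equivalently, internal nodes of $\sigma$ that are not internal nodes of $\tau$ — and one must check that removing the last leaf $v=ij$ leaves this set unchanged unless $i$ itself drops out of it, which is handled by the degenerate case above.

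A cleaner alternative I would also keep in mind is a direct bijective/weighted-counting proof: a path in $\{\tau\to\sigma\}$ is an increasing labeling by $\{1,\dots,n\}$ of the $n = |\sigma\setminus\tau|$ new cells, compatible with the tree order, together with — implicitly — for each internal node a choice of which child appears first; the weight $w_T$ in \eqref{weight} depends only on that ``first-child'' data. Grouping paths by their underlying increasing labeling of $\sigma\setminus\tau$ (there are $|\sigma\setminus\tau|!/\prod_{i\in\sigma\setminus\tau}|\sigma^{(i)}|$ such, by the ordinary hook formula applied to the forest $\sigma\setminus\tau$) and summing the weights over the remaining freedom at each relevant internal node $i$ gives a factor $\sum_{j} |\sigma^{(ij)}| w_{i,ij} / (|\sigma^{(i)}|-1)$ — the number of linear extensions in which $ij$ is the first-seen child is $|\sigma^{(ij)}|/(|\sigma^{(i)}|-1)$ times the total — which directly yields \eqref{countpref2}. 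I would present whichever of the two is shorter after checking the degenerate cases; the induction is safer, the bijective argument more transparent.
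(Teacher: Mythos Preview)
Your proposal is sound; either route would close the argument. The paper takes neither of them literally, but its proof is essentially the rigorous form of your second (bijective) alternative. Rather than peeling off the \emph{last} leaf, the paper decomposes at the \emph{root} via the boxed-product construction of Flajolet--Sedgewick, obtaining the one-step recursion
\[
d(\varnothing,\sigma)=\left(\sum_{j=1}^d \frac{|\sigma^{(j)}|}{|\sigma|-1}\,w_{\emptyset,j}\right)\binom{|\sigma|-1}{|\sigma^{(1)}|,\ldots,|\sigma^{(d)}|}\prod_{k=1}^d d_k(\varnothing,\sigma^{(k)}),
\]
and then \eqref{countpref} follows by induction on the tree. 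This is exactly your observation that ``the number of linear extensions in which $ij$ is the first-seen child is $|\sigma^{(ij)}|/(|\sigma^{(i)}|-1)$ times the total'', but the recursion is what makes the \emph{product} over internal nodes legitimate: the first-child events at different internal nodes are independent under the uniform increasing labeling, and the root decomposition is precisely the mechanism that certifies this independence, one level at a time. In your bijective sketch that independence is used but not justified, so if you pursue that route you would in effect rediscover the paper's recursion.

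Your leaf-removal induction is a genuinely different recursion and has the advantage of proving \eqref{countpref2} directly (the paper only asserts that the general case is analogous). The price is the case analysis you already anticipate when removing $v=ij$ turns its parent $i$ from an internal node into a leaf, which the root decomposition avoids entirely since each internal node contributes its factor exactly once, at the step where it is the current root.
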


\begin{proof} 	
To compute the combinatorial dimension of such increasing labeled trees, we use the \emph{boxed product} method from \cite[II.6.3]{FlajoB}.

A \emph{boxed product} of two labeled combinatorial classes $\mathcal A$ and $\mathcal B$ (in our case, sets of prefix trees), denoted as $\mathcal C = \mathcal A^\square \star \mathcal B$, is a class consisting of pairs of objects $(a,b) \in \mathcal A \times \mathcal B$ such that the minimal label of $a$ is less than the minimal label of $b$. 

The number of such pairs $(a,b)$ such that $a$ and $b$ are of size $k$ and $n-k$ respectively can be counted by the formula
\begin{equation}\label{boxprod}
\binom{n-1}{k-1} A_k B_{n-k} =
\frac{k}{n}\binom{n}{k}  A_k B_{n-k},
\end{equation}
where $A_k$ and $B_{n-k}$ denote the number of objects of size $k\in\mathbb N$ and $n-k\in\mathbb N$ from $\mathcal A$ and $\mathcal B$ respectively. 

Therefore, assuming that the first child of the root is $j \in \{1,\dots,d\}$ (equivalently, that the minimal label of $\sigma^{(j)} \sqcup \bigsqcup_{l\neq j} \sigma^{(l)}$ belongs to the $j$th component), and using this counting principle, one get the recursive formula
\begin{equation}\label{countrec}
d(\varnothing, \sigma) = \left(\sum_{j=1}^d \frac{|\sigma^{(j)}|}{|\sigma| - 1} w_{\emptyset, j}\right)
\binom{|\sigma| - 1}{|\sigma^{(1)}|, \ldots, |\sigma^{(d)}|} \prod_{k=1}^d d_k(\varnothing, \sigma^{(k)}),
\end{equation}
where $d_k$ denotes the combinatorial dimension in which the weights $w_{i, ij}$ are replaced by $w_{ki, kij}$, and $\binom{n}{k_1,\cdots,k_d}$ denotes the usual multinomial coefficient. 

Thereafter, the proof of the lemma follows by induction.	
\end{proof}

By using Lemma \ref{lem:dimpref}, we obtain results similar to those in Theorem \ref{bratstructure}. The proof follows from simple calculations and is omitted.
 
\begin{coro}\label{Bratstructure2}
The normalized extremal NNHFs of this WBD take the form 
\begin{equation}\label{harmpref}
\varphi(\tau) = {\prod_{i \in \tau} \alpha_i} \, {\prod_{i \in \overset{\circ}{\tau}} \frac{\alpha_i}{\sum_{j=1}^d \alpha_{ij} w_{i, ij}}},
\end{equation}
with $\alpha_\emptyset = 1$ and $\alpha_i = \alpha_{i1} + \ldots + \alpha_{id} \in [0, 1]$ for any $i \in \mathcal{S}_d$. Again, the $\alpha_i$  represents the asymptotic proportion of descendants of $i$ in a regular path.

Besides, the corresponding saturated  ergodic central Markov kernel is then given by
\begin{equation}
p(\tau, \tau \sqcup \{ij\}) = \left\{
\begin{array}{cl}
\displaystyle \frac{\alpha_{ij} w_{i, ij}}{\sum_{k=1}^d \alpha_{ik} w_{i, ik}} \alpha_i  , & \text{if } i \in \partial \tau, \\[15pt]
\alpha_{ij}, & \text{if } i \notin \partial \tau \text{ and } ij \notin \tau,
\end{array}
\right.
\end{equation}
for any $i \in \tau$ and $j \in \{1, \ldots, d\}$.
 
\end{coro}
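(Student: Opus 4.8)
The plan is to derive the shape of the extremal NNHFs and the associated ergodic central Markov kernel directly from Lemma \ref{lem:dimpref}, mimicking the strategy used to establish Theorem \ref{bratstructure} but now carrying the extra preferential-weight factors. First I would fix a regular path $(\tau_n)_{n\geq 0}$ in $\mathcal P^{\mathcal S_d}$ converging to a boundary point, and compute the Martin kernel $K(\tau,\zeta)=\lim_{n\to\infty} d(\tau,\tau_n)/d(\varnothing,\tau_n)$ using the ratio of \eqref{countpref2} to \eqref{countpref}. In this ratio the factorial and $|\sigma^{(i)}|$-product terms telescope exactly as in the proof of \eqref{hook2}, leaving a leading contribution $\prod_{i\in\tau}|\tau_n^{(i)}|/(n\cdots(n-k+1))$, while the preferential-weight products over $\overset{\circ}{\sigma}$ differ only on the finitely many internal nodes lying in $\tau$ (more precisely on $\overset{\circ}{\sigma}\cap\tau$ in the numerator versus the whole of $\overset{\circ}{\sigma}$ in the denominator restricted to those same nodes), so the infinite tails cancel. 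Invoking \eqref{asymp} to replace $|\tau_n^{(i)}|/n$ by $\alpha_i$ and $|\tau_n^{(ij)}|/n$ by $\alpha_{ij}$, one reads off
\begin{equation*}
K(\tau,\zeta)=\prod_{i\in\tau}\alpha_i\;\prod_{i\in\overset{\circ}{\tau}}\frac{\alpha_i}{\sum_{j=1}^d \alpha_{ij}w_{i,ij}},
\end{equation*}
which is precisely \eqref{harmpref}; the constraints $\alpha_\emptyset=1$ and $\alpha_i=\sum_j\alpha_{ij}$ come from regularity (the proportions of a node's descendants split among its children), exactly as in \eqref{eq:labelgenealogical}.

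Next I would check harmonicity of $\varphi$ as given by \eqref{harmpref} directly, i.e. verify $\varphi(\tau)=\sum_{\tau\nearrow\sigma}w(\tau,\sigma)\varphi(\sigma)$; this is a finite computation splitting the sum over the new node $ij$ according to whether $i\in\partial\tau$ (adding a first child, which turns $i$ into an internal node and thus introduces the extra factor $\alpha_i/\sum_k\alpha_{ik}w_{i,ik}$ and the weight $w_{i,ij}$) or $i\in\overset{\circ}{\tau}$ already (no new internal node, weight $1$ in the current normalization of $\varphi$ — here one must be careful that in the unweighted-looking branch the relevant weight is absorbed, matching the $\alpha_{ij}$ transition). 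The identity $\alpha_i=\sum_j\alpha_{ij}$ is what makes the second group of terms sum correctly, and $\sum_j \alpha_{ij}w_{i,ij}$ in the denominator is exactly the normalizing constant produced by the first group. Then the central Markov kernel formula is just $p(\tau,\sigma)=w(\tau,\sigma)\varphi(\sigma)/\varphi(\tau)$ read off case by case, giving the two displayed cases. Finally, extremality of these NNHFs and the bijection with boundary points $\alpha$ follow from Corollary \ref{coro} and the Martin boundary machinery in Appendix \ref{appendix}, since we have exhibited every $K(\cdot,\zeta)$ in this form and conversely every admissible label family $(\alpha_i)$ arises from some regular path (build a skeleton and a compatible fragmentation as in \eqref{frag}).

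The main obstacle I anticipate is bookkeeping the preferential-weight product in the Martin-kernel limit: in \eqref{countpref2} the product runs over $(\overset{\circ}{\sigma}\setminus\tau)\sqcup(\overset{\circ}{\sigma}\cap\partial\tau)$ rather than over all of $\overset{\circ}{\sigma}$, so when forming $d(\tau,\tau_n)/d(\varnothing,\tau_n)$ one has to argue carefully that the factors attached to internal nodes of $\tau_n$ that lie strictly below $\tau$ cancel between numerator and denominator, that the factors attached to nodes of $\overset{\circ}{\tau}$ survive (these give the second product in \eqref{harmpref}), and that the nodes of $\partial\tau$ that become internal in $\tau_n$ contribute a factor tending to $1$ after normalization because $\sum_j|\tau_n^{(ij)}|w_{i,ij}\sim \alpha_i\, n\cdot(\text{something})$ and $|\tau_n^{(i)}|-1\sim\alpha_i n$ have comparable growth — one must check the ratio converges and does not blow up or vanish. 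This is the step where the hypothesis that the path grows inside the skeleton $\mathcal S_d$ (so all $\alpha_i>0$ on the skeleton) is used to keep denominators away from zero; everything else is routine induction and algebraic simplification, which is why the authors say the proof "follows from simple calculations and is omitted."
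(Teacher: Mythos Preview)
Your approach is correct and is exactly what the paper intends: compute the Martin kernel as the limit of $d(\tau,\tau_n)/d(\varnothing,\tau_n)$ using the explicit formulas \eqref{countpref}--\eqref{countpref2} from Lemma \ref{lem:dimpref}, then read off the transition kernel via $p(\tau,\sigma)=w(\tau,\sigma)\varphi(\sigma)/\varphi(\tau)$.

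One small correction to your bookkeeping in the ``obstacle'' paragraph: the factors attached to leaves of $\tau$ that become internal in $\tau_n$ do not merely ``tend to $1$'' --- they cancel \emph{exactly}. Indeed, since $\overset{\circ}{\tau}\subset\overset{\circ}{\sigma}$ and $\tau=\overset{\circ}{\tau}\sqcup\partial\tau$, the index set in \eqref{countpref2} satisfies
\[
(\overset{\circ}{\sigma}\setminus\tau)\sqcup(\overset{\circ}{\sigma}\cap\partial\tau)=\overset{\circ}{\sigma}\setminus\overset{\circ}{\tau},
\]
so the ratio of the preferential-weight products is precisely $\prod_{i\in\overset{\circ}{\tau}}\bigl(|\sigma^{(i)}|-1\bigr)\big/\bigl(\sum_j|\sigma^{(ij)}|w_{i,ij}\bigr)$, with no residual factor from $\partial\tau$. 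This makes the limit cleaner than you feared: no need to worry about ratios of comparable growth at those nodes, and the positivity hypothesis on the $\alpha_i$ is only needed for $i\in\overset{\circ}{\tau}$ (to keep the surviving denominators nonzero), not for leaves.
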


As a consequence, we obtain the following generalization of Han's hook length formula (\ref{comb3}). 	

\begin{coro} 
Let $\mathcal{B}_d(n)$ be the set of all $d$-ary trees with $n$ vertices. Then
\begin{equation}\label{comb4}
\sum_{\tau \in \mathcal{B}_d(n)} \frac{\prod_{v \in \overset{\circ}{\tau}} \frac{\sum_{j=1}^d |\tau^{(vj)}| w_j}{|\tau^{(v)}| - 1}}{\displaystyle \prod_{v \in \tau} |\tau^{(v)}| d^{|\tau^{(v)}| - 1}}  \left(\frac{w_1 + \cdots + w_d}{d}\right)^{|\partial \tau|} = \frac{1}{n!} \left(\frac{w_1 + \cdots + w_d}{d}\right)^n.
\end{equation}
\end{coro}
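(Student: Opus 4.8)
The plan is to specialize the general combinatorial identity \eqref{combipower} (that is, \eqref{combipower0} in the introduction) to the weighted saturated sub-Bratteli diagram $\mathcal{P}^{\mathcal{S}_d}$, whose level sets are the $\mathcal{B}_d(n)$, equipped with the preferential weights $w_{i,ij}=w_j$ depending only on the child index $j\in\{1,\dots,d\}$; since each $\mathcal{B}_d(n)$ is finite, Assumption \ref{Ass2} holds. First I would take the NNHF furnished by Corollary \ref{Bratstructure2} from the uniform labels $\alpha_i=d^{-|i|}$ of Section \ref{sec:uniffrag}, which satisfy $\alpha_\varnothing=1$, $\alpha_i=\sum_{j=1}^d\alpha_{ij}$ and $\sum_{j=1}^d\alpha_{ij}w_{i,ij}=d^{-|i|-1}(w_1+\cdots+w_d)$; then \eqref{harmpref}, combined with \eqref{newphi}, gives
\begin{equation}
\varphi(\tau)=\left(\prod_{i\in\tau}d^{-|i|}\right)\left(\frac{d}{w_1+\cdots+w_d}\right)^{|\overset{\circ}{\tau}|}=\left(\prod_{i\in\tau}\frac{1}{d^{|\tau^{(i)}|-1}}\right)\left(\frac{d}{w_1+\cdots+w_d}\right)^{|\overset{\circ}{\tau}|}.
\end{equation}

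Next I would plug this $\varphi$, together with the expression \eqref{countpref} for $d(\varnothing,\sigma)$ from Lemma \ref{lem:dimpref} specialized to $w_{i,ij}=w_j$, into the identity $\sum_{\sigma\in\mathcal{B}_d(n)}d(\varnothing,\sigma)\,\varphi(\sigma)=1$. A direct multiplication gives, for each $\sigma\in\mathcal{B}_d(n)$ of size $n$,
\begin{equation}
d(\varnothing,\sigma)\,\varphi(\sigma)=n!\;\frac{\displaystyle\prod_{i\in\overset{\circ}{\sigma}}\frac{\sum_{j=1}^d|\sigma^{(ij)}|\,w_j}{|\sigma^{(i)}|-1}}{\displaystyle\prod_{i\in\sigma}|\sigma^{(i)}|\,d^{|\sigma^{(i)}|-1}}\left(\frac{d}{w_1+\cdots+w_d}\right)^{|\overset{\circ}{\sigma}|}.
\end{equation}

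Finally I would turn the exponents: since $|\overset{\circ}{\sigma}|=|\sigma|-|\partial\sigma|=n-|\partial\sigma|$, one has $\bigl(d/(w_1+\cdots+w_d)\bigr)^{|\overset{\circ}{\sigma}|}=\bigl(d/(w_1+\cdots+w_d)\bigr)^{n}\bigl((w_1+\cdots+w_d)/d\bigr)^{|\partial\sigma|}$, so that pulling the constant $n!\,\bigl(d/(w_1+\cdots+w_d)\bigr)^{n}$ out of the sum and dividing through turns $\sum_{\sigma}d(\varnothing,\sigma)\varphi(\sigma)=1$ into exactly \eqref{comb4} (after renaming $\sigma$ as $\tau$). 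This establishes \eqref{comb4} for all positive $w_1,\dots,w_d$; as every internal node $v$ has $|\tau^{(v)}|\ge 2$, no denominator vanishes, both sides are polynomials in $(w_1,\dots,w_d)$, and the identity extends to all complex values. I expect no real obstacle: the argument is pure substitution into \eqref{combipower}, and the only point requiring care is the exponent bookkeeping relating $|\overset{\circ}{\tau}|$, $|\partial\tau|$ and $n$, together with the use of \eqref{newphi} to bring $\prod_{i\in\tau}\alpha_i$ into the form that appears in the statement.
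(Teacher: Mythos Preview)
Your proposal is correct and follows essentially the same route as the paper: both specialize Corollary~\ref{Bratstructure2} to the uniform labels $\alpha_i=d^{-|i|}$ with $w_{i,ij}=w_j$, invoke \eqref{newphi} to rewrite $\prod_{i\in\tau}\alpha_i$, observe that the second product in \eqref{harmpref} collapses to $(d/\sum_j w_j)^{|\overset{\circ}{\tau}|}$, and then feed this together with \eqref{countpref} into \eqref{combipower}. Your write-up is simply more explicit about the exponent bookkeeping $|\overset{\circ}{\tau}|=n-|\partial\tau|$ and adds the (harmless) polynomial extension to complex $w_j$, which the paper omits.
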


\begin{proof}
Consider the uniform fragmentation measure case  as in Section \ref{sec:uniffrag}, that is $\alpha_i = d^{-|i|}$ and $w_{i, ij} \equiv w_j$ for all $i\in\mathcal S_d$ and $1\leq j\leq d$.

By using  \eqref{newphi} and  by noting that
\begin{equation}
{\prod_{i \in \overset{\circ}{\tau}} \frac{\alpha_i}{\sum_{j=1}^d \alpha_{ij} w_{i, ij}}}=\left(\frac{d}{\sum_{j=1}^dw_j}\right)^{|\tau|- |\partial \tau|},
\end{equation}
we deduce the result from \eqref{harmpref} and the general combinatorial identity  \eqref{combipower}. 
\end{proof}

As an example,  when $d = 2$ and $n = 3$, we obtain
\begin{equation*}
\frac{1}{6} \left(\frac{w_1 + w_2}{2}\right)^3 = \frac{1}{12} \left(\frac{w_1 + w_2}{2}\right)^3 + \frac{1}{48} \sum_{1 \leq i, j \leq 2} w_i w_j \left(\frac{w_1 + w_2}{2}\right).
\end{equation*}

It is much more difficult to describe the MERWs in full generality. However, drawing on the previous situation, one can construct them using appropriate IDLA-like random walks under some additional assumptions.

\begin{prop}\label{prop:sum1}
	Assume that $\sum_{k=1}^d w_{i, ik} = d$ for all $i \in \mathcal{S}_d$. Then there is a unique MERW. 
	
	The Markov kernel can be written, for any $d$-ary tree $\tau$ of size $n$, $i \in \tau$ and $j \in \{1, \cdots, d\}$ such that $ij \notin \tau$, by 
	\begin{equation}
	p(\tau, \tau \sqcup \{ij\}) = \left\{
	\begin{array}{cl}
	\frac{1}{1+(d-1)n}, & \text{if } i \notin \partial \tau, \\[15pt]
	\frac{w_{i, ij}}{1+(d-1)n}, & \text{if } i \in \partial \tau.
	\end{array}
	\right.
	\end{equation}
\end{prop}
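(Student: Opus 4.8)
The plan is to show that under the hypothesis $\sum_{k=1}^d w_{i,ik}=d$ for every internal node $i$, the combinatorial dimensions $d(\varnothing,\mathcal{P}_n^{\mathcal S_d})$ and the ratios $\varphi_n$ simplify dramatically, so that the limit NNHF $\varphi^\ast$ is explicit and unique. First I would specialize the hook-length formula \eqref{countpref} from Lemma \ref{lem:dimpref} to the present situation. Since the weights on the children of any fixed node sum to $d$, I expect that the weighted count of increasing labelings of a fixed $d$-ary tree $\sigma$ no longer depends on the detailed shape of $\sigma$ through the preferential weights, but only through the unweighted hook product $\prod_{i\in\sigma}|\sigma^{(i)}|$ and a global factor. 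Concretely, I would try to prove by induction (using the recursion \eqref{countrec}) that
\begin{equation}
d(\varnothing,\mathcal P_n^{\mathcal S_d})=\frac{n!}{\prod_{k=1}^{n}\big(1+(d-1)k\big)}\cdot(\text{something}),
\end{equation}
or rather to compute $d(\varnothing,\mathcal P_n^{\mathcal S_d})$ directly and show it equals the unweighted $d$-ary count times a fixed constant; the key cancellation is that, summing \eqref{countrec} over the choice of first child $j$, the factor $\sum_j|\sigma^{(j)}|w_{\emptyset,j}$ is controlled by $\sum_j w_{\emptyset,j}=d$ and the structure of $\mathcal S_d$, where each node has exactly $d$ children available.

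Next I would compute $\varphi_n(\tau)=d(\tau,\mathcal P_n^{\mathcal S_d})/d(\varnothing,\mathcal P_n^{\mathcal S_d})$ using \eqref{countpref2}, take $n\to\infty$, and identify the limit. Here the relevant asymptotic regime is $\alpha_i=d^{-|i|}$ for all $i$ (the uniform fragmentation boundary point of Section \ref{sec:uniffrag}), because under the sum-to-$d$ constraint the weights do not bias which subtree grows; so I would plug $\alpha_i=d^{-|i|}$ into the general extremal NNHF formula \eqref{harmpref} of Corollary \ref{Bratstructure2}. Using $\prod_{i\in\tau}\alpha_i=\prod_{i\in\tau}d^{-(|\tau^{(i)}|-1)}$ from \eqref{newphi} and the identity $\prod_{i\in\overset{\circ}{\tau}}\frac{\alpha_i}{\sum_j\alpha_{ij}w_{i,ij}}=\prod_{i\in\overset{\circ}{\tau}}\frac{d^{-|i|}}{d^{-|i|-1}\sum_j w_{i,ij}}=\prod_{i\in\overset{\circ}{\tau}}\frac{d}{\sum_j w_{i,ij}}=1$ (this last equality is exactly where the hypothesis is used), the candidate NNHF collapses to $\varphi^\ast(\tau)=\prod_{i\in\tau}d^{-(|\tau^{(i)}|-1)}$, which is independent of the preferential weights. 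Uniqueness then follows exactly as in the proof of Proposition \ref{BST} and in {\it ii)} of Section \ref{sec:illustra}: I would check that with this $\varphi^\ast$, every path of length $n$ from the root has the same probability $\prod_{k=1}^n(1+(d-1)k)^{-1}$ independently of the weights, so the restriction of the associated central measure to $\mathcal T_n$ must coincide with $\mu_n$, forcing $\mu^\ast$ to be this single measure and hence the MERW to be unique.

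Finally, I would read off the Markov kernel from $p(\tau,\sigma)=w(\tau,\sigma)\varphi^\ast(\sigma)/\varphi^\ast(\tau)$. When $i\notin\partial\tau$ the new node $ij$ does not create a new internal node and carries weight $1$ in the sense of the path-weight bookkeeping of \eqref{weight} (the weight $w_{i,ij}$ was already paid), so the ratio $\varphi^\ast(\sigma)/\varphi^\ast(\tau)$ reduces to the unweighted BST transition $\frac{1}{1+(d-1)n}$ exactly as in \eqref{MERWdary}; when $i\in\partial\tau$, adding $ij$ turns $i$ into an internal node and the edge carries the weight $w_{i,ij}$, producing the stated factor $\frac{w_{i,ij}}{1+(d-1)n}$. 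I would double-check consistency by summing over the $1+(d-1)n$ available vertices $v$: the internal-node contributions each give $\frac{1}{1+(d-1)n}$ and, for each leaf $i$, the $d$ (or fewer, but on $\mathcal S_d$ exactly the missing children) contributions sum to $\frac{\sum_j w_{i,ij}}{1+(d-1)n}$; reconciling this with the constraint $\sum_k w_{i,ik}=d$ and the combinatorics of how many leaves versus internal nodes a size-$n$ $d$-ary tree has is the one place where I expect to need a small but careful counting argument, and it is the main obstacle. The rest is bookkeeping with \eqref{countpref}–\eqref{countpref2} and the already-established machinery of Sections \ref{sec:def}–\ref{sec:central}.
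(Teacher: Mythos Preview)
Your proposal contains a genuine gap: you identify the MERW with the \emph{extremal} central measure at the uniform fragmentation point $\alpha_i=d^{-|i|}$, but this is not the MERW. The NNHF you obtain, $\varphi^\ast(\tau)=\prod_{i\in\tau}d^{-(|\tau^{(i)}|-1)}$, is indeed harmonic (this is what Corollary~\ref{Bratstructure2} guarantees), but it is \emph{not} constant on the level sets of the BD: for $d=2$ one has $\varphi^\ast(\{\emptyset,1,11\})=1/8$ while $\varphi^\ast(\{\emptyset,1,2\})=1/4$. Consequently $\mu^\ast(C_s)=w_s\,\varphi^\ast(\text{endpoint of }s)$ is \emph{not} proportional to $w_s$, and the restriction of your candidate measure to $\mathcal T_n$ is \emph{not} $\mu_n$. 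Your uniqueness argument, modeled on Proposition~\ref{BST}, therefore fails at exactly the point where you assert ``every path of length $n$ from the root has the same probability $\prod_{k=1}^n(1+(d-1)k)^{-1}$''. The correct MERW NNHF is $\varphi^\ast(\tau)=\prod_{k=1}^{|\tau|-1}(1+(d-1)k)^{-1}$, which depends only on $|\tau|$ and is a nontrivial mixture over the boundary (as already for the unweighted $d$-ary search tree, which corresponds to a Dirichlet mixture, not to the single ergodic point $\alpha_i=d^{-|i|}$).

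The paper's proof takes a different route: it constructs an explicit IDLA-like process whose path probability factorises as $\prod_{k=0}^{n-1}(1+(d-1)k)^{-1}\cdot\prod_{i\in N_j}\frac{d\,w_{i,ij}}{\sum_k w_{i,ik}}$, which under the hypothesis $\sum_k w_{i,ik}=d$ becomes exactly $\prod_{k=0}^{n-1}(1+(d-1)k)^{-1}\cdot w_T$; this is the definition of $\mu_n$, so the restriction to each $\mathcal T_n$ is $\mu_n$ and the MERW is unique. Your overall strategy can be salvaged, but not via \eqref{harmpref}: the observation you need is that under $\sum_k w_{i,ik}=d$, the \emph{weighted out-degree} of every $\tau$ equals the unweighted one, namely $1+(d-1)|\tau|$ (each leaf contributes $\sum_j w_{i,ij}=d$ instead of $d$ ones, and internal nodes contribute weight $1$ per missing child). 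This immediately gives that $d(\tau,\mathbb X_n)$ depends only on $|\tau|$, hence $\varphi_n(\tau)$ is constant on level sets for every $n$, and the limit $\varphi^\ast$ and the kernel follow at once.
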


\begin{proof} We define an increasing sequence of $d$-ary trees $(\tau_n)_{n\geq 0}$ as follows. Set $\tau_0=\{\emptyset\}$ and, given $\tau_n$,  we perform a random walk on $\mathcal S_d$. 
	
The transitions of this random walk are such that at each node $i$ of $\tau_n$, we choose a child $ij$, where $j\in\{1,\cdots,d\}$, with a probability equal to 
\begin{equation}
\frac{1 + (d-1)|\tau^{(ij)}_n|}{\sum_{k=1}^d 1 + (d-1)|\tau^{(ik)}_n|}=\frac{1 + (d-1)|\tau^{(ij)}_n|}{d + (d-1)(|\tau^{(i)}_n|-1)}= \frac{1 + (d-1)|\tau^{(ij)}_n|}{1 + (d-1)|\tau^{(i)}_n|},
\end{equation}
when $i$ is not a leaf, and with a probability proportional to $w_{i, ij}$ otherwise. Once there exits $\tau_n$, we add the newly visited node $v$ and we set $\tau_{n+1} = \tau_n \sqcup \{v\}$.

Denote by $\mu$ the distribution of the corresponding random walk. Let $T$ be any path starting from the root and ending at $\tau$ of size $n$ and denote as previously by $N_j$ the set of internal nodes $i$ of $\tau$ such that $ij$ is the first child of $i$ appearing in the path. 

Observe that the probability of such a path $T$ is given by   
\begin{equation}
\mu(T)=\prod_{k=0}^{n-1} \frac{1}{1 + (d-1)k} \prod_{j=1}^d \prod_{i \in N_j} \frac{d \, w_{i, ij}}{\sum_{k=1}^d w_{i, ik}}.
\end{equation}  
Here, we use the fact that $1+(d-1)|\tau^{(k)}|=d$ when $k\in\partial \tau$. 

Noting that the latter probability is proportional to the weight $w_T$ defined in \eqref{weight}, in the case when $\sum_{k=1}^d w_{i,ik} \equiv d$, we deduce that the distribution $\mu_n$ in \eqref{mun} is, in this case, simply the restriction of $\mu$ to $\mathcal{T}_n$, leading to the desired result.
\end{proof}

In the sequel, we assume that $d=2$ but also that that there exists $x,y>0$ such that $w_{i,1}=x$ and $w_{i,2}=y$ for all $i\in\mathcal S_2$ as in Figure \ref{Fragweights}.

\begin{thm}\label{thm:BSTgen}
There exists a unique MERW. The corresponding PHF is given for all binary tree $\tau$ by
\begin{equation}\label{eq:harmmerwbstgen}
\varphi(\tau)=\frac{1}{|\tau|!} \left(\frac{2}{x+y}\right)^{|\overset{\circ}{\tau}|},
\end{equation}

In particular, the non-zero transition probabilities of the MERW are given by
\begin{equation}
p(\tau,\tau\sqcup{\{ij\}})=
\left\{\begin{array}{cl}
\frac{2x}{x+y}\frac{1}{|\tau|+1},&\text{if $i\in\partial \tau$ and $j=1$,}\\[10pt]
\frac{2y}{x+y}\frac{1}{|\tau|+1},& \text{if $i\in\partial \tau$ and $j=2$,}\\[10pt]
\frac{1}{|\tau|+1},& \text{if $i\in \overset{\circ}{\tau}$ and $ij\notin \tau$.}
\end{array}\right.
\end{equation}

Furthermore, let  $(\tau_n)_{n\geq 0}$ be the corresponding MERW starting from the root. Then, for all $i\in\mathcal S_2$ and $j\in \{1,2\}$,
\begin{equation}\label{eq:asympBSTgen}
\lim_{n\to\infty}\frac{|\tau_n^{(ij)}|}{|\tau_n^{(i)}|}\overset{\text{p.s.}}{=}V_{i,j},
\end{equation}
where $(V_{i,1},V_{i,2})_{i\in\mathcal S_2}$ are {i.i.d.\@} random variables on the  simplex $\{(t,1-t) : 0\leq t\leq 1\}$ having for density the function
\begin{equation}\label{eq:density}
h_{x,y}(t)=\frac{2xt+2y(1-t)}{x+y}.
\end{equation}
\end{thm}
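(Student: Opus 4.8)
The strategy is to apply Proposition~\ref{prop:sum1} (or rather its mechanism) and Corollary~\ref{Bratstructure2} to the specific two-parameter weight $w_{i,1}=x$, $w_{i,2}=y$, after renormalizing. First I would observe that the weights $x,y$ can be rescaled: since all paths of length $n$ ending at a fixed tree $\tau$ carry weight $w_T=x^{|N_1|}y^{|N_2|}$ with $|N_1|+|N_2|=|\overset{\circ}{\tau}|$, and the MERW only depends on the weight structure up to the proportionality in $\mu_n$, I can replace $(x,y)$ by $(2x/(x+y),\,2y/(x+y))$, which sum to $2=d$. Then Proposition~\ref{prop:sum1} applies directly with $d=2$: there is a unique MERW, its kernel is the one displayed, and the IDLA-like random walk description gives the explicit transition probabilities listed in the statement. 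This also pins down the support: the skeleton is all of $\mathcal S_2$, so $\varphi$ is a genuine PHF.

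Next I would compute $\varphi(\tau)$. One route is to use the general identity $p(T)=w_T\varphi(\tau)$ for any path $T$ from the root to $\tau$ (equation~\eqref{link}), together with the explicit value of $\mu(T)=p(T)$ obtained in the proof of Proposition~\ref{prop:sum1}: there $\mu(T)=\prod_{k=0}^{|\tau|-1}\frac{1}{1+k}\cdot\prod_{j}\prod_{i\in N_j}\frac{2w_{i,ij}}{x+y}$, and dividing by $w_T=x^{|N_1|}y^{|N_2|}$ leaves exactly $\frac{1}{|\tau|!}\left(\frac{2}{x+y}\right)^{|\overset{\circ}{\tau}|}$, since $|N_1|+|N_2|=|\overset{\circ}{\tau}|$. (Alternatively one checks directly that this $\varphi$ satisfies the harmonicity relation~\eqref{harmonicc} with the claimed kernel, which is a short computation using $|\sigma^{(i)}|=1$ for leaves.) This gives~\eqref{eq:harmmerwbstgen} and~\eqref{eq:asympBSTgen}'s prerequisite that the MERW is the ergodic central chain attached to a specific random boundary point.

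For the asymptotic statement~\eqref{eq:asympBSTgen}, I would invoke Theorem~\ref{bratstructure} together with the convergence theorem for central Markov chains (Theorem~\ref{convcentral}), exactly as in Corollary~\ref{coro:BST}: along the MERW, $|\tau_n^{(ij)}|/n\to\alpha_{ij}$ almost surely and the ratios $\alpha_{ij}/\alpha_i$ are the fragmentation ratios $\nu_i$, which are the coordinates of the (random) Martin boundary point that the chain converges to. So it remains to identify the \emph{law} of $(V_{i,1},V_{i,2})=(\alpha_{i1}/\alpha_i,\alpha_{i2}/\alpha_i)$ under the MERW. By the recursive/self-similar structure of $\mathcal P^{\mathcal S_2}$ (the subtree hanging from each added node is again a BST-type process with the same weights), the variables $(V_{i,1},V_{i,2})_{i\in\mathcal S_2}$ are i.i.d., so it suffices to compute the law of $V=V_{\emptyset,1}$, i.e.\ the limiting proportion of the left subtree. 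I would do this either by a de~Finetti / exchangeability argument on the first-split dynamics — conditionally on the split value $t$, each new node joins the left subtree with probability $t$ (up to the boundary correction), and integrating the MERW weighting over the split produces a size-biasing of the uniform law by the factor $h_{x,y}(t)=\frac{2xt+2y(1-t)}{x+y}$ coming from the weight $x$ (resp.\ $y$) paid the first time a left (resp.\ right) child appears — or, more robustly, by computing $\mathbb E[V^n]$ from the finite-$n$ marginals $m_n$ and recognizing the moment sequence of the density $h_{x,y}$.

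\textbf{Main obstacle.} The routine parts (rescaling the weights, applying Proposition~\ref{prop:sum1}, evaluating $\varphi$) are bookkeeping. The genuinely delicate step is the last one: rigorously identifying the limiting density $h_{x,y}$ of the left-subtree proportion. One must be careful that the weighting $w_T$ biases the finite-$n$ law $m_n$ of the first split in a way that, in the $n\to\infty$ limit, produces precisely the linear tilt $h_{x,y}$ of the uniform distribution rather than, say, a Beta-type law as in the unweighted BST case; getting the normalization right and justifying the passage to the limit (using Corollary~\ref{coro} and the continuity of $\zeta\mapsto K(x,\zeta)$ on $\mathbb X\sqcup\partial\mathbb X$) is where the real work lies.
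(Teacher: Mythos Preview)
Your rescaling step is where the argument breaks. You claim that since $w_T=x^{|N_1|}y^{|N_2|}$ with $|N_1|+|N_2|=|\overset{\circ}{\tau}|$, replacing $(x,y)$ by $(\lambda x,\lambda y)$ with $\lambda=2/(x+y)$ leaves $\mu_n$ unchanged. But $|\overset{\circ}{\tau}|$ is \emph{not} constant over the endpoints $\tau\in\mathbb X_n$: a binary tree of size $n+1$ can have anywhere between roughly $n/2$ and $n$ internal nodes. So under rescaling, $w_T\mapsto\lambda^{|\overset{\circ}{\tau}|}w_T$ with an exponent that varies across $\mathcal T_n$, and $\mu_n$ genuinely changes. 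Concretely, already $d(\varnothing,\mathbb X_2)=(x+y)^2+(x+y)$ is not homogeneous, and one checks that $\varphi_2$ is not scale-invariant either. The homogeneity of the MERW kernel in $(x,y)$ is a \emph{consequence} of the theorem (the paper even flags it as a remark afterwards), not something available a priori; you cannot invoke Proposition~\ref{prop:sum1} until $x+y=2$ is actually true. Your computation of $\varphi$ from $\mu(T)/w_T$ is therefore circular: the IDLA-like chain you write down is certainly \emph{a} central chain for the original weights, but nothing yet shows it is the limit of the $\mu_n$'s.

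The paper closes this gap by a direct singularity analysis: it sets up a recursion for $T_{n+1}=d(\varnothing,\mathbb X_n)$, turns it into the Riccati-type ODE $\mathfrak T'=1+s\mathfrak T+\tfrac{s}{2}\mathfrak T^2$ for the EGF (with $s=x+y$), solves it, locates the dominant singularity $u^\ast$, and applies the transfer theorem to get $T_n\sim\tfrac{2}{s}\,n!\,(u^\ast)^{-(n+1)}$. A parallel multinomial identity expresses $d(\tau,\mathbb X_{n-1})$ as the coefficient of $u^{n-k}/(n-k)!$ in $(\mathfrak T')^{\ell(\tau)}(\mathfrak T+1)^{\kappa(\tau)}$, whose singular behaviour yields $\varphi_n(\tau)\to\tfrac{1}{k!}\bigl(\tfrac{2}{s}\bigr)^{\ell(\tau)+\kappa(\tau)-1}=\tfrac{1}{|\tau|!}\bigl(\tfrac{2}{s}\bigr)^{|\overset{\circ}{\tau}|}$ via $2\ell+\kappa=|\tau|+1$. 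This is where the homogeneity appears, and only after this can one read off the kernel. For the asymptotic law~\eqref{eq:asympBSTgen}, rather than your moment or exchangeability sketch, the paper writes $\varphi=\mathbb E[\phi]$ with $\phi$ the random extremal NNHF of Corollary~\ref{Bratstructure2} built from i.i.d.\ pairs $(V_{i,1},V_{i,2})$ of density $h_{x,y}$; the key identity is $\mathbb E\bigl[V_{i,1}^{a}V_{i,2}^{b}/(xV_{i,1}+yV_{i,2})\bigr]=\tfrac{2}{x+y}\mathbf B(a+1,b+1)$, after which Theorem~\ref{convcentral} gives the almost-sure limit exactly as in Corollary~\ref{coro:BST}.
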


\begin{rem}
	Note that the case when $s=x+y=2$ is covered by Proposition \ref{prop:sum1}.
\end{rem}

\begin{rem}
	It is well known that the height $H_n$ of a random BST of size $n$ is of the order $c \log(n)$ for some positive constant $c$, as $n$ goes to infinity (see Théorème 6.22 in \cite{chauvin}).  
	
	This case corresponds to $x = y = 1$, and it could be interesting to study the asymptotics of the height of the MERW for arbitrary $x, y > 0$.  
\end{rem}

\begin{rem}
	A similar result can be stated and proved for weighted $d$-ary trees, where the MERW turns out to be given by 
	\begin{equation}
	p(\tau,\tau\sqcup{\{ij\}})=
	\left\{\begin{array}{cl}
	\frac{d\, x_j}{\sum_{k=1}^d x_k}\frac{1}{1+(d-1)|\tau|}, & \text{if } i\in\partial \tau \text{ and } j\in\{1,\dots,d\}, \\[10pt]
	\frac{1}{1+(d-1)|\tau|}, & \text{if } i\in \overset{\circ}{\tau} \text{ and } ij\notin \tau.
	\end{array}\right.
	\end{equation}
	Here, $x_j$ denotes the preferential attachment of the $j$th child of a leaf for the first time. 
	
We observe that these transition probabilities are homogeneous with respect to these parameters, in contrast to the weights of the paths (see, for instance, the example in Figure \ref{Fragweights}).

\end{rem}

\begin{proof}
We first study the asymptotics of the total combinatorial dimension $T_{n+1} = d(\varnothing, \mathbb{X}_n)$. 

Recall that $\mathbb{X}_n$ represents the set of binary trees of size $n+1$, in such a way that $T_n$ corresponds to the total weight of increasing labeled trees of size $n$, for $n \geq 1$. 

For instance, we have $T_1 = 1$, $T_2 = x + y$ and $T_3 = x^2 + 2xy + y^2 + x + y$.

\begin{lem}\label{lem:rec}
For all $n\geq 1$, one has 
\begin{equation}\label{eq:rec}
T_{n+1}=(x+y)\left[T_n+\sum_{k=1}^{n-1}\binom{n-1}{k-1} T_k T_{n-k}\right].
\end{equation}
\end{lem}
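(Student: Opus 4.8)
The plan is to decompose $T_{n+1}$ --- which, as remarked in the statement, equals the total weight $\sum_{\sigma}d(\varnothing,\sigma)$ of increasing labelings of binary trees $\sigma$ of size $n+1$ --- according to the two subtrees hanging at the root. Writing $\sigma^{(1)}$, $\sigma^{(2)}$ for the left and right subtrees of $\sigma$ and setting $k=|\sigma^{(1)}|$ (so that $|\sigma^{(2)}|=n-k$ with $0\le k\le n$), I would treat separately the \emph{degenerate} case where the root has a single child ($k=0$ or $k=n$) and the \emph{generic} case $1\le k\le n-1$, in which the pair $(\sigma^{(1)},\sigma^{(2)})$ ranges bijectively over (size-$k$ binary trees) $\times$ (size-$(n-k)$ binary trees).

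For the generic case I would invoke the recursive boxed-product identity (\ref{countrec}) of Lemma \ref{lem:dimpref}, specialized to $d=2$ with the homogeneous weights $w_{i,i1}\equiv x$, $w_{i,i2}\equiv y$. The point that makes the recursion close on the $T_\bullet$ is homogeneity: since every left-slot edge carries weight $x$ and every right-slot edge weight $y$, the shifted dimensions $d_1,d_2$ in (\ref{countrec}) coincide with $d$ itself, so (\ref{countrec}) reads
\begin{equation*}
d(\varnothing,\sigma)=\frac{kx+(n-k)y}{n}\binom{n}{k}\,d(\varnothing,\sigma^{(1)})\,d(\varnothing,\sigma^{(2)}).
\end{equation*}
Summing over $\sigma$ with $1\le k\le n-1$ contributes $\sum_{k=1}^{n-1}\frac{kx+(n-k)y}{n}\binom{n}{k}T_kT_{n-k}$. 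For the degenerate case I would argue directly: a tree whose root has only child $1$ is built by first adding the node $(1)$ (weight $x$) and then placing an arbitrary increasing labeling on the remaining size-$n$ subtree rooted at $(1)$, so this family contributes exactly $xT_n$; symmetrically the ``only child $2$'' family contributes $yT_n$. Hence
\begin{equation*}
T_{n+1}=(x+y)T_n+\sum_{k=1}^{n-1}\frac{kx+(n-k)y}{n}\binom{n}{k}T_kT_{n-k}.
\end{equation*}

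It then remains to reshape the sum. Using $\frac{k}{n}\binom{n}{k}=\binom{n-1}{k-1}$ and $\frac{n-k}{n}\binom{n}{k}=\binom{n-1}{k}$, it splits as $x\sum_{k=1}^{n-1}\binom{n-1}{k-1}T_kT_{n-k}+y\sum_{k=1}^{n-1}\binom{n-1}{k}T_kT_{n-k}$. In the second sum I would substitute $k\mapsto n-k$: since $T_kT_{n-k}=T_{n-k}T_k$ and $\binom{n-1}{n-k}=\binom{n-1}{k-1}$, that sum equals $y\sum_{k=1}^{n-1}\binom{n-1}{k-1}T_kT_{n-k}$. Adding the two pieces gives $(x+y)\sum_{k=1}^{n-1}\binom{n-1}{k-1}T_kT_{n-k}$, and combining with the $(x+y)T_n$ term yields exactly (\ref{eq:rec}). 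A quick check against $T_1=1$, $T_2=x+y$ reproduces $T_3=(x+y)(x+y+1)$, matching the values listed before the lemma.

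The only genuinely delicate point is the boundary bookkeeping: one must isolate the single-child cases so that (\ref{countrec}) is applied only when both $\sigma^{(1)}$ and $\sigma^{(2)}$ are bona fide nonempty trees --- equivalently, adopt the convention $d(\varnothing,\emptyset)=1$ with an empty subtree contributing an empty product, in which case (\ref{countrec}) already subsumes the degenerate terms, its prefactor $\frac{0\cdot x+ny}{n}$ (resp.\ $\frac{nx+0\cdot y}{n}$) automatically selecting the weight of the unique first move. Everything else is the elementary binomial manipulation above.
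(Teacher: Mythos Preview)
Your proof is correct. Both your argument and the paper's decompose $T_{n+1}$ according to the root's children and treat the single-child case separately, but the parametrizations differ: the paper lets $k$ be the size of the \emph{first-created} subtree (whichever of the two children appears earliest in the labeling), which immediately yields the factor $(x+y)\binom{n-1}{k-1}$ without further manipulation, whereas you fix $k=|\sigma^{(1)}|$ (the left subtree) and pull the result from the already-proved recursion~(\ref{countrec}). Your route buys you a black-box citation of Lemma~\ref{lem:dimpref} at the cost of the asymmetric prefactor $kx+(n-k)y$, which you then have to symmetrize via $k\mapsto n-k$; the paper's choice of $k$ sidesteps that algebra but re-derives the boxed-product count by hand. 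Neither approach is materially shorter, and both rest on the same combinatorial idea.
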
 

\begin{proof}[Proof of Lemma \ref{lem:rec}]
The lemma relies on the following observation: a binary tree of size $n+1$ can be either  
\begin{enumerate}  
	\item a tree whose root has only one child;  
	\item a tree whose root has two children.  
\end{enumerate}  

The first case corresponds to the term $T_n$, multiplied by $x$ or $y$ depending on whether the first or second offspring of the root is present.  

In the second case, we distinguish which offspring $c_1 \in \{1,2\}$ was created first, corresponding to multiplication by $x$ and $y$. In each of these two cases, let $k\in\{1,\cdots, n-1\}$ be the size of the subtree containing all descendants of $c_1$. Writing $\{1,2\}$ as  $\{c_1, c_2\}$, the subtree containing all descendants of $c_2$ must necessarily have size $n-k\in \{1,\cdots, n-1\}$.  We then need to distribute $n$ labels between these two increasing labeled rooted trees, with the constraint that the subtree of size $k$ contains the smallest one. The number of such choices is given by $\binom{n-1}{k-1}$, similarly to the Boxed product (\ref{boxprod}).  

This completes the proof.
\end{proof}

In the following, we set $s=x+y$ and we introduce the exponential generating function 
\begin{equation}
\mathfrak T(u)=\sum_{n=1}^{\infty} T_n\frac{u^n}{n!}.
\end{equation}

\begin{lem}\label{lem:edo}
The recurrence \eqref{eq:rec} translates into the language of generating functions as
\begin{equation}\label{eq:ODE}
\partial_u \mathfrak T(u)=1+s\mathfrak T(u)+s\int_0^u \mathfrak T(v)\partial_v\mathfrak T(v)dv.
\end{equation}
\end{lem}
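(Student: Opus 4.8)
## Proof Plan for Lemma 3.4

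The strategy is to translate the combinatorial recurrence \eqref{eq:rec} term by term into operations on the exponential generating function $\mathfrak T(u) = \sum_{n\geq 1} T_n u^n/n!$, and then recognize each resulting series. First I would multiply both sides of \eqref{eq:rec} by $u^n/n!$ and sum over $n \geq 1$. The left-hand side $\sum_{n\geq 1} T_{n+1} u^n/n!$ is the derivative $\partial_u \mathfrak T(u)$ shifted appropriately: since $\partial_u \mathfrak T(u) = \sum_{n\geq 1} T_n u^{n-1}/(n-1)! = \sum_{m\geq 0} T_{m+1} u^m/m!$, and $T_1 = 1$ contributes the constant term, we get $\partial_u \mathfrak T(u) = 1 + \sum_{n\geq 1} T_{n+1} u^n/n!$. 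This accounts for the ``$1 + {}$'' on the right of \eqref{eq:ODE} and the shift structure.

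Next I would handle the two terms inside the bracket on the right of \eqref{eq:rec}, each multiplied by $s = x+y$. The term $s\sum_{n\geq 1} T_n u^n/n! = s\,\mathfrak T(u)$ is immediate. For the convolution term, the key observation is the binomial-coefficient identity $\binom{n-1}{k-1} = \frac{n!}{k!(n-k)!}\cdot\frac{k}{n}$, equivalently $\frac{1}{n!}\binom{n-1}{k-1} = \frac{1}{n}\cdot\frac{1}{k!}\cdot\frac{1}{(n-k)!}\cdot k$. Hence
\begin{equation}
\sum_{n\geq 1}\left(\sum_{k=1}^{n-1}\binom{n-1}{k-1}T_k T_{n-k}\right)\frac{u^n}{n!}
= \sum_{n\geq 1}\frac{u^n}{n}\sum_{k=1}^{n-1}\left(\frac{k\,T_k}{k!}\right)\frac{T_{n-k}}{(n-k)!}.
\end{equation}
The inner sum over $k$ is precisely the coefficient-wise Cauchy product of the sequences defining $u\,\partial_u\mathfrak T(u) = \sum_{k\geq 1} \frac{k T_k}{k!}u^k$ and $\mathfrak T(u) = \sum_{j\geq 1}\frac{T_j}{j!}u^j$; that is, it is the coefficient of $u^n$ in $(\partial_u\mathfrak T(u))\cdot\mathfrak T(u)\cdot u$ up to the index bookkeeping. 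The remaining factor $\frac{1}{n}$ acting on a power series $\sum c_n u^n$ corresponds to the antiderivative operator $\int_0^u (\,\cdot\,)\,dv/v$, which is why $\int_0^u \mathfrak T(v)\,\partial_v\mathfrak T(v)\,dv$ appears: indeed $\int_0^u \mathfrak T(v)\partial_v\mathfrak T(v)\,dv = \sum_n \frac{1}{n}[u^n]\big(\mathfrak T(v)\partial_v\mathfrak T(v)\big)\,u^n$.

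The one point requiring genuine care — and the main obstacle — is matching the index ranges so that the convolution $\mathfrak T(v)\partial_v\mathfrak T(v)$ produces exactly $\sum_{k=1}^{n-1} (kT_k/k!)(T_{n-k}/(n-k)!)$ with both $T_k$ and $T_{n-k}$ carrying indices in $\{1,\dots,n-1\}$; one must check that the terms $k=0$ and $k=n$ do not spuriously appear (they do not, since $\mathfrak T$ and $\partial_u\mathfrak T$ have no constant term in the relevant sense, $\mathfrak T(0)=0$), and that $\partial_v \mathfrak T(v)$ contributes the factor $k$ correctly. Once this bookkeeping is verified, I would assemble the three pieces: $\partial_u\mathfrak T(u) - 1 = s\mathfrak T(u) + s\int_0^u \mathfrak T(v)\partial_v\mathfrak T(v)\,dv$, which rearranges to \eqref{eq:ODE}. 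A clean alternative, to sidestep the integral bookkeeping, is to differentiate the claimed identity \eqref{eq:ODE} once more to obtain $\partial_u^2\mathfrak T = s\,\partial_u\mathfrak T + s\,\mathfrak T\,\partial_u\mathfrak T$, extract the coefficient of $u^{n-1}/(n-1)!$ on both sides, and check that this reproduces \eqref{eq:rec} exactly (using $\partial_u^2\mathfrak T = \sum_{n} T_{n+2}u^n/n!$ and the Leibniz/Cauchy expansion of $\mathfrak T\,\partial_u\mathfrak T$); then verify the initial conditions $\mathfrak T(0)=0$, $\partial_u\mathfrak T(0)=T_1=1$ to pin down the integrated form. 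I would likely present whichever of these two routes yields the shorter verification.
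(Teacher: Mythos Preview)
Your proposal is correct and follows essentially the same approach as the paper: both translate the recurrence \eqref{eq:rec} term by term into the exponential generating function, identifying the left side with $\partial_u\mathfrak T(u)-1$, the linear term with $s\,\mathfrak T(u)$, and the binomial convolution with $s\int_0^u \mathfrak T(v)\partial_v\mathfrak T(v)\,dv$ via the identity $\binom{n-1}{k-1}=\tfrac{k}{n}\binom{n}{k}$. The paper computes $\mathfrak T(v)\partial_v\mathfrak T(v)$ first and then integrates, while you factor out $1/n$ and interpret it as antidifferentiation, but this is only a cosmetic difference in presentation; your alternative route via differentiating \eqref{eq:ODE} once more is also valid and slightly streamlines the index bookkeeping.
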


\begin{proof}[Proof of Lemma \ref{lem:edo}]
Noting that $\mathfrak T(u)=u+\sum_{n\geq 2} T_n\frac{u^n}{n!}$ and using (\ref{eq:rec}), we obtain 
\begin{eqnarray}
\partial_u\mathfrak T(u) & = & 1+\sum_{n\geq 2}T_n\frac{u^{n-1}}{(n-1)!}\\
& = & 1+s\sum_{n\geq 2} T_{n-1}\frac{u^{n-1}}{(n-1)!}+s\sum_{n\geq 3}\left(\sum_{k=1}^{n-2}\binom{n-2}{k-1}T_kT_{n-1-k}\right)\frac{u^{n-1}}{(n-1)!}.\label{eq:partial}
\end{eqnarray}
Since the first sum in \eqref{eq:partial} is equal to $\mathfrak{T}(u)$, we only need to focus on the second one.  Besides, regarding this sum, it actually starts at $n=3$ because when $n=2$, the term is zero.

Then,  note that
\begin{equation}\label{eq;derivepartiel}
\partial_u \mathfrak T(u)=\sum_{n\geq 0} T_{n+1}\frac{u^{n}}{n!}.
\end{equation}
It comes  
\begin{equation}
\partial_u \mathfrak T(u)\mathfrak T(u)=
\sum_{n\geq 1}\left(\sum_{k=0}^{n-1}\frac{T_{k+1}T_{n-k}}{k!(n-k)!}\right) u^n=
\sum_{n\geq 1}\left(\sum_{k=1}^{n}\binom{n}{k-1}T_{k}T_{n-k+1}\right) \frac{u^n}{n!},
\end{equation}
and thus
\begin{eqnarray}
\int_0^u\partial_v\mathfrak T(v)\mathfrak T(v)dv 
& = &  \sum_{n\geq 1}\left(\sum_{k=1}^{n}\binom{n}{k-1}T_{k}T_{n-k+1}\right) \frac{u^{n+1}}{(n+1)!}\\
& = & \sum_{n\geq 3}\left(\sum_{k=1}^{n-2}\binom{n-2}{k-1}T_{k}T_{n-1-k}\right) \frac{u^{n-1}}{(n-1)!}. 	
\end{eqnarray}
This completes the proof.
\end{proof}

The differential equation \eqref{eq:ODE} then translates into a separable ODE:
\begin{equation}\label{eq:ODE2}
\partial_u \mathfrak T(u)=1+s\mathfrak T(u)+\dfrac{s}{2} \mathfrak T^2(u).
\end{equation}
Thereafter, we easily deduce that 
\begin{equation}\label{eq:int}
u=\int_0^{\mathfrak T(u)}\frac{dt}{1+st+s\frac{t^2}{2}}.
\end{equation}

\begin{lem}\label{lem:solveode}
If $s\in (0,2)$ then   
\begin{equation}
\mathfrak T(u)=\frac{2}{s}\times \frac{\tan\left(\frac{u\sqrt{s(2-s)}}{2}\right)}
{\sqrt{\frac{2-s}{s}}-   \tan\left(\frac{u\sqrt{s(2-s)}}{2}\right)}.
\end{equation}
If $s\in (2,\infty)$ then   
\begin{equation}
\mathfrak T(u)=\frac{2}{s}\times \frac{\sqrt{s(s-2)} e^{u\sqrt{s(s-2)}}}{s-1+\sqrt{s(s-2)}-e^{u\sqrt{s(s-2)}}}.
\end{equation}
\end{lem}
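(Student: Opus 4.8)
The plan is to solve the separable ODE (\ref{eq:ODE2}) in its integral form (\ref{eq:int}) and then invert the resulting implicit relation. First I would factor the quadratic in the denominator by completing the square,
\[
1 + st + \tfrac{s}{2}t^2 = \tfrac{s}{2}\left[(t+1)^2 + \tfrac{2-s}{s}\right],
\]
so that, viewed as a polynomial in $t$, it has discriminant $s(s-2)$ and its roots switch from complex to real precisely at $s=2$; this is exactly the dichotomy in the statement, the borderline value $s=2$ (a double root, yielding $\mathfrak T(u)=u/(1-u)$) being excluded.

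For $s\in(0,2)$ the roots are complex. Setting $a:=\sqrt{(2-s)/s}$, an antiderivative of the integrand is $\tfrac{2}{sa}\arctan\tfrac{t+1}{a}$, and since $\mathfrak T(0)=0$ there is no constant of integration, so (\ref{eq:int}) becomes, using $sa=\sqrt{s(2-s)}$,
\[
\frac{u\sqrt{s(2-s)}}{2}=\arctan\frac{\mathfrak T(u)+1}{a}-\arctan\frac{1}{a}.
\]
I would then apply the tangent-addition formula to isolate $\mathfrak T(u)$ and simplify with $a^2+1=2/s$, which gives the claimed expression with $\theta=u\sqrt{s(2-s)}/2$. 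For $s\in(2,\infty)$ the roots are real: with $b:=\sqrt{(s-2)/s}\in(0,1)$ the denominator factors as $\tfrac{s}{2}(t+1-b)(t+1+b)$, partial fractions give the antiderivative $\tfrac{1}{sb}\ln\!\bigl|\tfrac{t+1-b}{t+1+b}\bigr|$, and all factors are positive for $t\ge 0$ so the absolute values drop, giving
\[
u\sqrt{s(s-2)}=\ln\frac{(\mathfrak T(u)+1-b)(1+b)}{(\mathfrak T(u)+1+b)(1-b)}.
\]
Exponentiating turns this into an affine equation in $\mathfrak T(u)$, which I would solve and then simplify with $1-b^2=2/s$ to reach the stated closed form, valid on $[0,u^\ast)$ with $u^\ast=(sb)^{-1}\ln\tfrac{1+b}{1-b}$ the pole of the solution — consistent with $\mathfrak T$, having nonnegative coefficients, having finite radius of convergence.

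The main obstacle is not the calculus but the bookkeeping: one has to keep the argument of $\arctan$ (resp. of the logarithm) on the correct branch so that the implicit relation holds as an identity throughout $[0,u^\ast)$ and not merely near $0$, and then perform the inversion without sign slips. As a closing consistency check I would expand both closed forms to order $u^2$ and verify $\mathfrak T(u)=u+\tfrac{s}{2}u^2+O(u^3)$, i.e.\ $T_1=1$ and $T_2=x+y$, which fixes the normalization and would expose any spurious constant.
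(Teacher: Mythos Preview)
Your proposal is correct and is exactly the approach the paper takes: its entire proof reads ``standard computations of the integral \eqref{eq:int} noting that the discriminant of the quadratic term is given by $\Delta=s(s-2)$'', and you have simply filled in those computations (arctangent antiderivative and tangent-addition for $s<2$, partial fractions and logarithm for $s>2$). Your closing Taylor check is a good idea---it would in fact flag that the displayed closed form for $s>2$ does not satisfy $\mathfrak T(0)=0$ as written, whereas the expression your method actually produces, $\tfrac{2}{s}\,\dfrac{E-1}{(1+b)-E(1-b)}$ with $E=e^{u\sqrt{s(s-2)}}$ and $b=\sqrt{(s-2)/s}$, does.
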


\begin{proof}[Proof of Lemma \ref{lem:solveode}]
The proof consists in standard computations of the integral \eqref{eq:int} noting that the discriminant of the quadratic term is given by $\Delta=s(s-2)$. 
\end{proof}

Let us set   
\begin{equation}\label{eq:singuexp}
u^\ast =
\begin{cases}
\frac{2}{\sqrt{s(2-s)}}\arctan\left(\sqrt{\frac{2-s}{s}}\right), & \text{if } s \in (0,2), \\[10pt]
\frac{1}{\sqrt{s(s-2)}}\log\left(s-1+\sqrt{s(s-2)}\right), & \text{if } s \in (2,\infty).
\end{cases}
\end{equation}

In each of these two cases, one can see that $u^\ast$ is the smallest  singularity of $\mathfrak T(u)$, since the other possible one is $\pi/\sqrt{s(2-s)}$ when $s\in(0,2)$.

\begin{rem}
The two formulas in \eqref{eq:singuexp} can be unified using the equality 
\begin{equation}
\arctan z=\frac{i}{2}\log\left(\frac{1-iz}{1+iz}\right), \quad z\in\mathbb C\setminus\{\pm i\}.
\end{equation}	
\end{rem}

\begin{lem}\label{lem:singularity}
For any $s\in(0,2)\sqcup(2,\infty)$ one has 
\begin{equation}\label{eq:asymsingular}
\mathfrak T(u)\underset{u\to u^\ast}{\sim}\frac{2}{s}\frac{1}{u^\ast-u}\quad\mbox{and}\quad \partial_u\mathfrak T(u)\underset{u\to u^\ast}{\sim}\frac{2}{s}\frac{1}{(u^\ast-u)^2}.
\end{equation}	
\end{lem}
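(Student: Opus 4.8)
The plan is to read off the asymptotics of $\mathfrak T(u)$ and $\partial_u\mathfrak T(u)$ near their common dominant singularity $u^\ast$ directly from the closed-form expressions in Lemma \ref{lem:solveode}, and then cross-check via the ODE \eqref{eq:ODE2}. Both ingredients are elementary, so the ``proof'' is really a short verification; I expect no serious obstacle, only bookkeeping of constants.

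\begin{proof}[Proof of Lemma \ref{lem:singularity}]
Consider first the case $s\in(0,2)$. Write $\omega=\sqrt{s(2-s)}/2$, so that $u^\ast=\frac1\omega\arctan\sqrt{(2-s)/s}$, which is exactly the value of $u$ for which $\tan(\omega u)=\sqrt{(2-s)/s}$. From Lemma \ref{lem:solveode},
\begin{equation}
\mathfrak T(u)=\frac{2}{s}\cdot\frac{\tan(\omega u)}{\sqrt{(2-s)/s}-\tan(\omega u)}.
\end{equation}
As $u\to u^\ast$, the numerator tends to $\sqrt{(2-s)/s}$, while the denominator vanishes to first order: by Taylor expansion, $\sqrt{(2-s)/s}-\tan(\omega u)=-\omega(1+\tan^2(\omega u^\ast))(u-u^\ast)+O((u-u^\ast)^2)$, and $1+\tan^2(\omega u^\ast)=1+(2-s)/s=2/s$. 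Hence the denominator behaves like $\frac{2\omega}{s}(u^\ast-u)$, and
\begin{equation}
\mathfrak T(u)\underset{u\to u^\ast}{\sim}\frac{2}{s}\cdot\frac{\sqrt{(2-s)/s}}{\frac{2\omega}{s}(u^\ast-u)}=\frac{\sqrt{(2-s)/s}}{\omega}\cdot\frac{1}{u^\ast-u}.
\end{equation}
Since $\omega=\frac12\sqrt{s(2-s)}=\frac{s}{2}\sqrt{(2-s)/s}$, the prefactor equals $2/s$, giving the first claimed equivalent. The case $s\in(2,\infty)$ is identical in spirit: with $\kappa=\sqrt{s(s-2)}$, the denominator $s-1+\kappa-e^{\kappa u}$ vanishes at $u^\ast=\frac1\kappa\log(s-1+\kappa)$ and its derivative there is $-\kappa e^{\kappa u^\ast}=-\kappa(s-1+\kappa)$, so the denominator is $\sim\kappa(s-1+\kappa)(u^\ast-u)$, while the numerator tends to $\kappa(s-1+\kappa)$; the ratio $\frac{2}{s}\cdot\frac{1}{u^\ast-u}$ follows at once.

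For the derivative, one differentiates the ODE \eqref{eq:ODE2}: since $\mathfrak T(u)\sim\frac{2}{s}(u^\ast-u)^{-1}$ blows up, the dominant term on the right-hand side of \eqref{eq:ODE2} is $\frac{s}{2}\mathfrak T^2(u)$, whence
\begin{equation}
\partial_u\mathfrak T(u)\underset{u\to u^\ast}{\sim}\frac{s}{2}\,\mathfrak T^2(u)\underset{u\to u^\ast}{\sim}\frac{s}{2}\cdot\frac{4}{s^2}\cdot\frac{1}{(u^\ast-u)^2}=\frac{2}{s}\cdot\frac{1}{(u^\ast-u)^2},
\end{equation}
which is the second claimed equivalent. (Alternatively, differentiating the explicit formulas from Lemma \ref{lem:solveode} gives the same result.) Finally, $u^\ast$ is indeed the dominant (smallest positive) singularity: in the trigonometric case the only other candidate is the pole of $\tan$ at $\omega u=\pi/2$, i.e.\ $u=\pi/\sqrt{s(2-s)}>u^\ast$ since $\arctan\sqrt{(2-s)/s}<\pi/2$; in the exponential case $\mathfrak T$ is meromorphic on $\mathbb C$ with the single real positive pole $u^\ast$ closest to the origin. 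This completes the proof.
\end{proof}
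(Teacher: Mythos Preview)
Your proof is correct and is exactly the kind of ``standard computation'' the paper alludes to (the paper's own proof is the single sentence ``The proof follows from standard computations''). Your use of the ODE \eqref{eq:ODE2} to extract the asymptotics of $\partial_u\mathfrak T$ from those of $\mathfrak T$ is a clean shortcut; the only cosmetic point is that you write ``one differentiates the ODE'' when in fact you simply read off $\partial_u\mathfrak T$ from the right-hand side of \eqref{eq:ODE2}, and the final paragraph on dominance of $u^\ast$ is redundant since the paper already established this just before the lemma.
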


\begin{proof}[Proof of Lemma \ref{lem:singularity}]
The proof follows from standard computations.
\end{proof}

To derive the asymptotics of $T_n$, we shall apply the transfer theorem as in the proof of Proposition \ref{prop:pascalweight}.  

To this end, note that $\tan(z)$ is analytic on $\{z \in \mathbb{C} : -\pi/2 < {\rm Re}(z) < \pi/2\}$, as is $r - \tan(z)$ for any $r > 0$. Set $\theta = \arctan(r) \in (0,\pi/2)$. There exists a $\mathbb{C}$-neighborhood $\mathcal{V}$ of $\theta$ such that $\tan$ induces a diffeomorphism from $\mathcal{V}$ onto its image, in such a way that for all $z \in \mathcal{V} \setminus \{\theta\}$, one has $r - \tan(z) \neq 0$.  

All of this shows that if $s \in (0,2)$, then $\mathfrak{T}$ admits an analytic continuation in some pacman domain of $\mathbb{C}$, where the corner at the mouth coincides with the singularity $u^\ast$.  

Similar arguments also show that this remains true when $s \in (2,\infty)$.  

As a consequence, we obtain that for all $s \in (0,2) \sqcup (2,\infty)$,  
\begin{equation}\label{eq:transfer}
T_n \sim \frac{2}{s} \frac{n!}{(u^\ast)^{n+1}}.
\end{equation}

\begin{rem}
When $s = x + y = 2$, we obtain $\mathfrak{T}(u) = \frac{u}{1 - u}$, which is consistent with \eqref{eq:transfer}, since in that case, one can easily show that $T_n = n!$ (as when $x = y = 1$).  

Besides, it is not entirely clear that $T_n$ increases with $s$, as it should. However, one can show that the function $s \mapsto u^\ast$ is extendable by continuity at $s = 2$ by setting $u^\ast = 1$ when $s = 2$ and verify that $s \mapsto s \times (u^\ast)^{n+1}$ is increasing on $(0,\infty)$ for all $n\geq 1$.  
\end{rem}

To find the MERWs, we need to compute the asymptotics of ${d(\tau, \mathbb{X}_{n})}/{d(\varnothing, \mathbb{X}_{n})}$  
for all finite binary trees $\tau$. Any limit point $\varphi(\tau)$ will produce an NNHF associated with a MERW according to the definition of such random walks.  

Let us introduce  
\begin{equation}\label{eq:leafnotcomplete}
\ell(\tau) = \#\partial \tau \quad \text{and} \quad \kappa(\tau) = \#\{i \in \overset{\circ}{\tau} : i1 \notin \tau \text{ or } i2 \notin \tau\},
\end{equation}
respectively, the number of leaves and the number of incomplete internal nodes of $\tau$.  Note that  
\begin{equation}\label{eq:numberleave}
2\ell(\tau) + \kappa(\tau) = |\tau| + 1 = \ell(\tau) + |\overset{\circ}{\tau}| + 1.
\end{equation}
These equalities will be useful later.

\begin{lem}\label{lem:combiasymptree}
For any $n \geq k$, we claim that  
\begin{equation}\label{eq:producT}
d(\tau, \mathbb{X}_{n-1}) = \sum_{\substack{i_1+\cdots+i_\ell+ \\ j_1+\cdots+j_\kappa=n-k}}  
\binom{n-k}{i_1, \dots, i_\ell, j_1, \dots, j_\kappa}  
T_{i_1+1} \cdots T_{i_\ell+1} T_{j_1}^{\star} \cdots T_{j_\kappa}^\star,
\end{equation}
where $\ell := \ell(\tau)$, $\kappa := \kappa(\tau)$, the indices $i_1, \dots,i_\ell, j_1, \dots,j_\kappa$ are non-negative integers, and we define $T_j^\star = T_j$ for all $j \geq 1$, with $T_0^\star = 1$. 

In particular, we obtain  
\begin{equation}\label{eq:producT2}
d(\tau, \mathbb{X}_{n-1}) = \left[\frac{u^{n-k}}{(n-k)!}\right]
\{\partial_u \mathfrak{T}(u)\}^{\ell(\tau)} \{\mathfrak{T}(u)+1\}^{\kappa(\tau)},
\end{equation} 	
where $[Z]F$ denotes the coefficient of $Z$ in $F$, interpreted as a formal series as usual.
\end{lem}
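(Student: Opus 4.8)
The plan is to reinterpret $d(\tau,\mathbb X_{n-1})$ combinatorially as a sum over ways to build an increasing labeled binary tree of size $n$ containing $\tau$, then translate this decomposition into generating functions. I would first fix a finite binary tree $\tau$ of size $k$. An increasing labeling of a tree $\sigma\supset\tau$ of size $n$ is obtained by distributing the $n-k$ ``new'' vertices among the places where growth can occur: below each leaf $i\in\partial\tau$ a full weighted increasing binary subtree can be attached (of size $i_m+1$ for the $m$-th leaf, where the $+1$ accounts for the leaf itself becoming an internal node once something is attached, and the empty attachment is allowed), and at each incomplete internal node $i\in\overset{\circ}{\tau}$ with a missing child one can attach a possibly-empty weighted increasing binary subtree (of size $j_r$, with the empty one contributing the factor $T_0^\star=1$). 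The number of ways to partition the $n-k$ new labels among these $\ell+\kappa$ slots respecting the internal increasing constraints of each piece is the multinomial coefficient $\binom{n-k}{i_1,\dots,i_\ell,j_1,\dots,j_\kappa}$ — exactly as in the boxed-product count $(\ref{boxprod})$ of Lemma $\ref{lem:dimpref}$, since within $\tau$ itself the labels $1,\dots,k$ are forced (they are the smallest) and the weight of $\tau$'s own edges is already a factor that, after dividing by $d(\varnothing,\mathbb X_{n-1})$, will disappear in the ratio. Summing over all compositions $i_1+\cdots+i_\ell+j_1+\cdots+j_\kappa=n-k$ gives $(\ref{eq:producT})$.

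For $(\ref{eq:producT2})$, I would recall that $T^\star$ has exponential generating function $\mathfrak T(u)+1$ (since $T_0^\star=1$ and $T_j^\star=T_j$ for $j\ge 1$, and $\sum_{n\ge 1}T_n u^n/n!=\mathfrak T(u)$), while the shifted sequence $(T_{i+1})_{i\ge0}$ has exponential generating function $\partial_u\mathfrak T(u)$ by $(\ref{eq;derivepartiel})$. The sum in $(\ref{eq:producT})$ is precisely the coefficient extraction of the product of $\ell$ copies of $\partial_u\mathfrak T$ and $\kappa$ copies of $\mathfrak T+1$, by the standard rule that a multinomial-weighted convolution of sequences corresponds to the product of their exponential generating functions: $\left[u^{n-k}/(n-k)!\right]\{\partial_u\mathfrak T(u)\}^{\ell}\{\mathfrak T(u)+1\}^{\kappa}$ equals exactly the right-hand side of $(\ref{eq:producT})$. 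Here $n-k$ is meant as $|\sigma|-|\tau|$, and one checks $k=|\tau|$, $\ell=\ell(\tau)$, $\kappa=\kappa(\tau)$ are the relevant invariants.

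The main obstacle is bookkeeping the slot structure correctly: one must be careful that a leaf $i\in\partial\tau$ that receives no new vertices contributes the factor $T_1^\star$... no — it should contribute nothing new but the leaf is still there, so the correct convention is that attaching a subtree of ``size $i_m$ extra vertices'' below leaf $i$ multiplies by $T_{i_m+1}$ with $T_1=1$ handling the empty case, whereas an incomplete internal node with $j_r$ extra vertices multiplies by $T_{j_r}^\star$ with $T_0^\star=1$ for the empty case. Getting these two boundary conventions consistent with the hook-length formula $(\ref{countpref})$ — in particular verifying the identity $2\ell(\tau)+\kappa(\tau)=|\tau|+1$ from $(\ref{eq:numberleave})$ so that the exponents and the overall size match — is the delicate point; once the slot inventory is pinned down, both $(\ref{eq:producT})$ and $(\ref{eq:producT2})$ follow by the routine symbolic-method translation, and I would carry out a small check ($\tau$ a single vertex, where $\ell=1$, $\kappa=0$, recovering $d(\varnothing,\mathbb X_{n-1})=T_n=[u^{n-1}/(n-1)!]\,\partial_u\mathfrak T(u)$) to confirm the conventions.
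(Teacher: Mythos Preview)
Your approach is essentially the same as the paper's: decompose the growth from $\tau$ to a tree of size $n$ into independent increasing continuations at the $\ell$ leaves (each contributing $T_{i_m+1}$) and at the $\kappa$ incomplete internal nodes (each contributing $T_{j_r}^\star$), distribute the $n-k$ new labels by a multinomial, and then read off the product of exponential generating functions. The paper phrases the two key observations exactly as you do, identifying $T_{i+1}=[u^i/i!]\,\partial_u\mathfrak T(u)$ and $T_j^\star=[u^j/j!](\mathfrak T(u)+1)$.

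Two small clean-ups: the remark about ``dividing by $d(\varnothing,\mathbb X_{n-1})$'' is extraneous---the quantity $d(\tau,\mathbb X_{n-1})$ involves only the weights of edges added \emph{after} $\tau$, so nothing about $\tau$'s own edge weights needs to cancel; and the hook-length formula (\ref{countpref}) is not actually used in this argument, so you can drop that reference. With those trimmed, your write-up matches the paper's proof.
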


\begin{proof}[Proof of Lemma \ref{lem:combiasymptree}]
To begin with, we make the following key observations:
\begin{itemize}
\item $T_{i+1}$ counts the total weight of increasing continuations of size $i$ for a rooted binary tree with only one element (since the root necessarily has the smallest label) and 
\begin{equation}\label{eq:1}
T_{i+1}=\left[\frac{u^i}{i!}\right]\partial_u\mathfrak T(u).
\end{equation}
\item $T^\star_j$ counts the total weight of increasing continuations of size $j$ for a rooted binary tree with only one element and whose root has only one fixed child (the left or the right) and
\begin{equation}\label{eq:2}
T_{j}^\star=\left[\frac{u^j}{j!}\right](\mathfrak T(u)+1).
\end{equation} 
\end{itemize}

\begin{rem}
	Note that $T_{0+1} = T^\star_0 = 1$, in accordance with Definition \ref{def:combidim}: the weight of a continuation of size zero is equal to one.
\end{rem}

Therefore, to compute the weighted total number of paths from $\tau$ to an arbitrary $\sigma \supset \tau$ of size $n$ (thus belonging to $\mathbb{X}_{n-1}$), one can proceed as follows:  
\begin{enumerate}
	\item Choose the number of descendants $i_1, \dots, i_\ell$ and $j_1, \dots, j_\kappa$ (excluding the nodes themselves) for each leaf $v_1, \dots, v_\ell$ and each incomplete internal node $w_1, \dots, w_\kappa$;
	\item Distribute the $n-k$ labels into $\kappa+\ell$ groups of sizes $i_1, \dots, i_\ell, j_1, \dots, j_\kappa$ associated with the nodes $v_1, \dots, v_\ell, w_1, \dots, w_\kappa$;
	\item Sum over all possibilities the corresponding total weight of such increasing continuations, which in any case is given by $T_{i_1+1} \cdots T_{i_\ell+1} T_{j_1}^\star \cdots T_{j_\kappa}^\star$.
\end{enumerate}

Based on this counting principle, we deduce \eqref{eq:producT}, and equality \eqref{eq:producT2} follows easily from \eqref{eq:1} and \eqref{eq:2}. This completes the proof of the lemma.
\end{proof}

Finally, by using \eqref{eq:asymsingular}, we obtain
\begin{equation}
\{\partial_u \mathfrak{T}(u)\}^{\ell(\tau)} \{\mathfrak{T}(u)+1\}^{\kappa(\tau)}\underset{u\to u^\ast}\sim \left(\frac{2}{s}\right)^{\ell(\tau)+\kappa(\tau)}\frac{1}{(u^\ast-u)^{k+1}}.
\end{equation}
Here we use that $2\ell(\tau)+\kappa(\tau)=k+1$.

Applying again the transfer theorem as for \eqref{eq:transfer}, we deduce that
\begin{equation}
d(\tau, \mathbb{X}_{n-1})\underset{n\to\infty}{\sim} \left(\frac{2}{s}\right)^{\ell(\tau)+\kappa(\tau)}\frac{n^k}{(u^\ast)^{n+1}}\frac{(n-k)!}{k!},
\end{equation}
and thus 
\begin{equation}
\lim_{n\to\infty}\frac{d(\tau, \mathbb{X}_{n-1})}{d(\varnothing, \mathbb{X}_{n-1})}=\frac{1}{k!} \left(\frac{2}{s}\right)^{\ell(\tau)+\kappa(\tau)-1}.
\end{equation}
The proof of the expression of the PHF is then straightforward by using \eqref{eq:leafnotcomplete}, as the above equation yields the transition probabilities of the corresponding MERW.

It remains to show the asymptotics \eqref{eq:asympBSTgen}. To this end, we shall write the PHF \eqref{eq:harmmerwbstgen} as the expectation of a random PHF, as in the BST process.  

We adopt the fragmentation measure approach to express the general NNHF \eqref{harmpref} as  
\begin{equation}
\phi(\tau) = \prod_{i\in\overset{\circ}{\tau}} V_{i,1}^{|\tau^{(i1)}|} V_{i,2}^{|\tau^{(i2)}|}  
\prod_{i\in\overset{\circ}{\tau}} \frac{1}{x V_{i,1} + y V_{i,2}},
\end{equation}
where, for the moment, the pairs $(V_{i,1},V_{i,2})$, with $i\in\mathcal{S}_2$, are arbitrary parameters belonging to the 1-dimensional simplex $\{(t,1-t) : 0\leq t\leq 1\}$.  

Note that the first product in the latter equality corresponds to the expression \eqref{diri}, which holds in the unweighted case $x = y = 1$.  

Now, choosing the family $(V_{i,1},V_{i,2})_{i\in\mathcal{S}_2}$ to be \emph{i.i.d.\@} and distributed as \eqref{eq:density} over the simplex, it follows that  
\begin{equation}
\mathbb{E}[\phi(\tau)] = \prod_{i\in\overset{\circ}{\tau}}  
\mathbb{E} \left[\frac{V_{i,1}^{|\tau^{(i1)}|}V_{i,2}^{|\tau^{(i2)}|}}{x V_{i,1} + y V_{i,2}}\right]  
= \left(\frac{2}{x+y}\right)^{|\overset{\circ}{\tau}|}  
\prod_{i\in\overset{\circ}{\tau}} \mathbf{B}(|\tau^{(i1)}|+1,|\tau^{(i2)}|+1) = \varphi(\tau).
\end{equation}
Recall that the computation of the product involving the beta functions in the latter equality has already been done (see equations \eqref{mean} and \eqref{transidla}).  

Since the sequence $(\alpha_i)_{i\in\mathcal{S}_2}$, which describes the NNHFs in \eqref{harmpref}, still represents the almost sure asymptotic proportion of descendants of $i \in \mathcal{S}_2$ in $\tau_n$ (as in Theorem \ref{bratstructure}), and since $\alpha_{i,j} = V_{i,j} \alpha$ for any $i \in \mathcal{S}_2$ and $j \in \{1,2\}$, we obtain the result as in Corollary \ref{coro:BST}.  

This completes the proof.
\end{proof}

\begin{rem}
There are endless possibilities for more general weighting schemes for trees, such as, for example, the case when the weights depend on behaviour of the nodes at a fixed distance from the main node. We can also consider a situation where the weight assigned to adding the $j$th child of a node $i$ depends on the presence of other children that have already been added. The formulas become quite heavy, but no conceptual difference is present.	
\end{rem}

\section{Infinite Combs and Agregation Processes}

\label{sec:combagreg}

In this section, we show that BD can be very convenient for studying aggregation processes, especially those related to growing tree structures, as in Section \ref{section:tree}.  

The first model we introduce is indeed an example of a tree growth process, whose skeleton is an infinite comb. It turns out that the MERW is trivial, but this model is still rich.  

It is related to random partitions (Bell numbers), and the combinatorial formula (\ref{combipower}) produces, as an example, a well-known identity involving Stirling numbers \eqref{stirling}.  

Moreover, choosing a suitable random environment leads to the famous Chinese restaurant process and allows us to retrieve well-known asymptotics.  

Finally, we discuss extensions of this model, some still related to tree growth processes, others much more difficult to handle and investigate, laying the groundwork for the last section on computer simulations.

\subsection{Description of the Aggregation Model and its BD structure}  

\label{sec:agregmodel}

We consider an aggregation process where individuals sequentially form groups according to simple rules.  

The first individual, say $I_1$, arrives and creates a group of size one, denoted by $(1)$. When the second individual arrives, they have two choices:  
1) Join the existing group created by $I_1$, forming a group of size two, denoted as $(2)$.  
2) Create a new group, resulting in two separate groups of size one, denoted as $(1,1)$.  More generally, when the $n$th individual arrives, they can either:  
1) Join any of the existing groups, increasing its size.  
2) Create a new group, leading to a different partition of individuals.

This iterative process generates a sequence of group formations, where the structure depends on the aggregation rules chosen. This type of dynamics can model the evolution of entities such as political parties or consumer choices.\\

\noindent
\textit{i) The BD structure.}  The $n$th level set of such process can be described by  
\begin{equation}
\mathbb{X}_{n} = \{(x_1, \dots, x_k) : k \geq 1, \; x_1, \dots, x_k \geq 1, \; x_1 + \dots + x_k = n+1\}.
\end{equation}
Note that $\mathbb{X}_0 = \{(1)\}$ and, more generally, the $n$th level consists of elements of size $n+1$.  

Then, denote by $\ell(x)$ the integer $k$ -- the number of groups -- appearing in the latter statement. One has $x \nearrow y$ in the following two cases:
\begin{enumerate}
	\item[i)] $\ell(x) = \ell(y)$ and $y_i = x_i + 1$ for some $i$, with $y_j = x_j$ for all $j \neq i$;
	\item[ii)] $\ell(y) = \ell(x) + 1$, $y_j = x_j$ for all $1 \leq j \leq \ell(x)$, and $y_{\ell(y)} = 1$.
\end{enumerate}

Similarly to the Young lattice, a state $x \in \mathbb{X}$ can be visualized as a stack of boxes (unit squares) with its bottom-left corner at coordinates $(i,j) \in \mathbb{N}^2$. Paths within $\mathbb{X}$ are thus represented by connected diagrams, with an enumeration of the boxes.

\begin{figure}[H]
	\centering
	\includegraphics[width=0.55\textwidth]{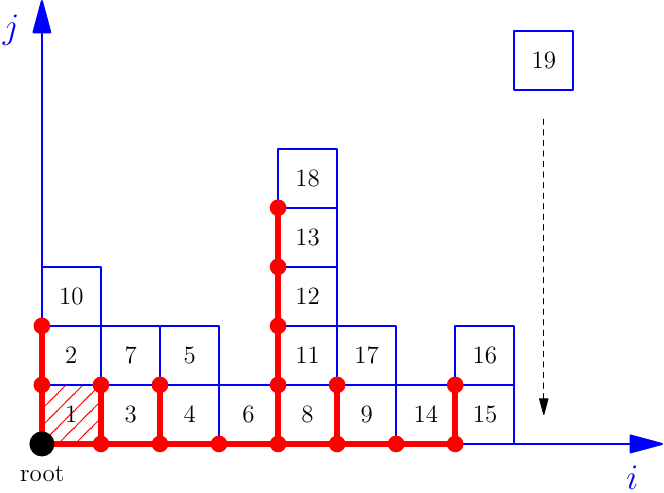}
	\captionsetup{width=0.8\textwidth}
	\caption{A path in $\mathbb{X}$ from the root to $x = (3,2,2,1,5,2,1,2)$ of size 18, with its corresponding tree structure.}
	\label{tower}
\end{figure}

\noindent
\textit{ii) Underlying tree structure.} This model is similar to a growing tree model (see Figure \ref{tower}). Indeed, let $\mathcal{C}$ be the infinite comb tree defined as $\mathcal{C} = \{2^i1^j : i, j \geq 0\}$. 

Here, $2^i1^j$ (with $2$ repeated $i$ times and $1$ repeated $j$ times) represents a node in $\mathcal{C}$. The root, corresponding to $i=j=0$, is still denoted by $\{\emptyset\}$ as in Section \ref{section:tree}.

To each $x \in \mathbb{X}_n$, it corresponds a finite comb, which is a finite prefix tree, given by   
\begin{equation}
\tau(x) = \{2^i1^j : 0 \leq i \leq \ell(x)-1, \, 0 \leq j \leq x_j-1 \} \subset \mathcal{C}.
\end{equation}
Note that $\tau((1))=\{\emptyset\}$ the tree with one element, the root. Reciprocally, to each finite prefix tree $\tau\subset\mathcal C$, there exists a unique $x\in\mathbb X$ such that $\tau(x)=\tau$.\\

\noindent
\textit{ii) The Martin boundary.} Based on this correspondence, we can apply Theorem \ref{bratstructure} and we deduce that the Martin boundary is given by  
\begin{equation}
\partial\mathbb{X} = \bigg\{(\theta_k)_{k\geq 1} \in [0,1]^{\mathbb{N}} : \sum_{k \geq 1} \theta_k \leq 1 \bigg\},
\end{equation}
where the $\theta_k$ represent the asymptotic behavior of $x_k/n$, where $x_k$ is the $k$th coordinate of an element $x \in \mathbb{X}_n$, as $n$ tends to infinity. Note also that $\theta_k$ represents the asymptotic proportion of boxes in the $k$-th tower in the diagram represented in Figure \ref{tower}. Besides, the corresponding boundary point $(\alpha_i)_{i\in \mathcal{C}}$ of the growing tree model is represented in Figure \ref{fig:comb}.

In addition, the corresponding extremal NNHFs  are given by
\begin{equation*}
\varphi_\theta(x_1, \cdots, x_k) = \theta_1^{x_1 - 1} \cdots \theta_k^{x_k - 1} (1 - \theta_1) \cdots \left(1 - \sum_{j=1}^{k-1} \theta_j\right).
\end{equation*}

Regarding the corresponding ergodic saturated central Markov chain, the probability to join the $k$-th group, if it exists, is equal to $\theta_k$, and the probability to create a new one when $n$ groups already exist is equal to $1 - \sum_{j=1}^n \theta_j$. 
	
\begin{figure}[H]
\centering
\includegraphics[width=0.65\textwidth]{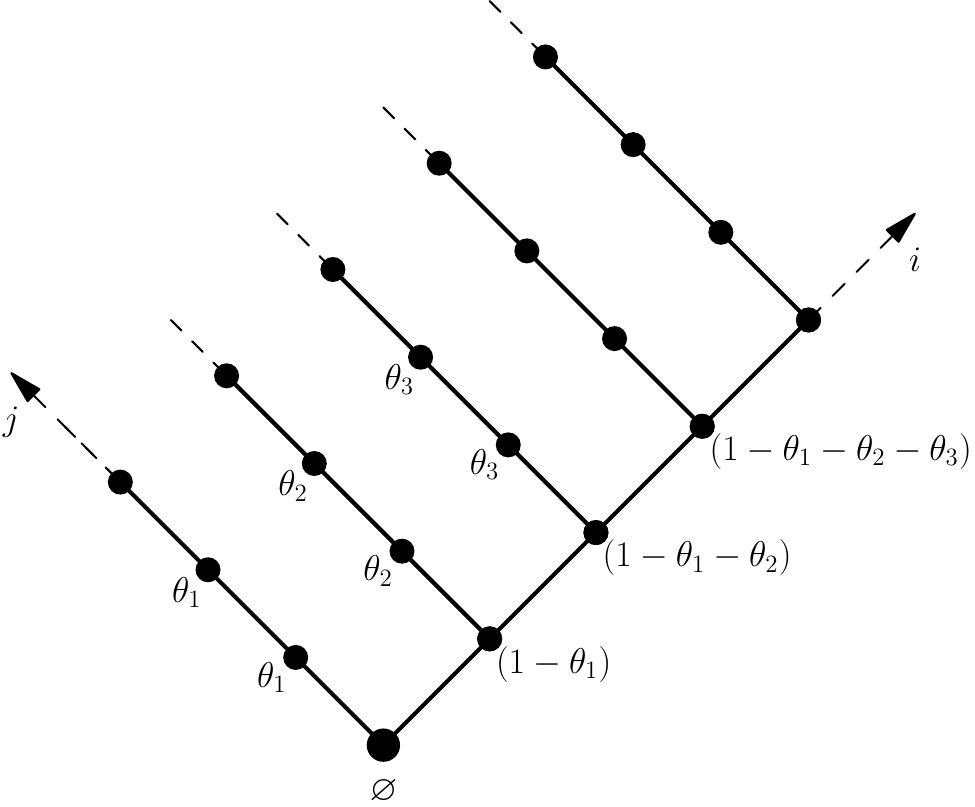}
\captionsetup{width=0.8\textwidth}
\caption{The Martin Boundary associated with the comb-skeleton $\mathcal {C}$.}
\label{fig:comb}
\end{figure}

\subsection{Uniform random partitions and the MERW} 

There exists a one-to-one correspondence between paths of length $n$ starting from the root and partitions of the set $\{1, \cdots, n\}$. This correspondence is illustrated in Figure \ref{bell}.

\begin{figure}[H]
	\centering
	\includegraphics[width=0.7\textwidth]{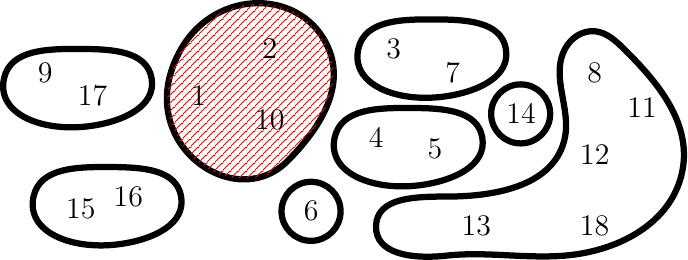}
	\captionsetup{width=0.8\textwidth}
	\caption{The partition corresponding to the path given in Figure \ref{tower}.}
	\label{bell}
\end{figure}

As a consequence, the combinatorial dimension $d(\varnothing, \mathbb{X}_{n-1})$ is equal to the Bell number $B_n$. Hence, the uniform distribution over the set of paths $\mathcal{T}_n$, denoted by $m_n$ as in (\ref{marginalunif}), corresponds to the distribution of a uniform random partition of $\{1, \cdots, n\}$. 

This distribution has been extensively studied in the literature, for instance, in \cite{Stam,Pitman}. Notably, it has been demonstrated that the asymptotic number of sets is of order $n/\ln(n)$, each set having approximately a cardinal equal to $\ln(n)$. 

We deduce the following result in the same manner as Proposition \ref{plancherelgrowth} for the Plancherel growth process.

\begin{prop}\label{trivial}
	The unique MERW associated with the agregation process in Section \ref{sec:agregmodel} is the deterministic process corresponding to boundary point $\theta = 0$.
\end{prop}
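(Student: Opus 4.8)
The plan is to follow the template of Proposition~\ref{plancherelgrowth}: identify the marginals $m_n$ from \eqref{marginalunif} with the law of a uniform random set partition, determine the unique limit point of $(m_n)$ in the compact space $\mathbb{X}\sqcup\partial\mathbb{X}$, and then read off the MERW from Corollary~\ref{coro}.

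First I would recall from Figure~\ref{bell} that $\mathcal{T}_n$ is in bijection with the partitions of $\{1,\dots,n\}$, so that $d(\varnothing,\mathbb{X}_{n-1})=B_n$ and, for $y=(x_1,\dots,x_k)\in\mathbb{X}_{n-1}$, the number $m_n(y)=d(\varnothing,y)/B_n$ is exactly the probability that a uniform random partition $\Pi_n$ of $\{1,\dots,n\}$ has blocks of sizes $x_1,\dots,x_k$, listed in the order in which their least elements appear. Next I would invoke the classical asymptotics quoted just above (see \cite{Stam,Pitman}): $\Pi_n$ has about $n/\ln n$ blocks, each of size about $\ln n$, so in particular its largest block is $o(n)$ with probability tending to $1$. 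Hence for every fixed $k\geq 1$ and every $\varepsilon>0$ one has $m_n(\{x:x_k\geq\varepsilon n\})\to 0$, which — recalling from Section~\ref{sec:agregmodel} that the boundary coordinate $\theta_k$ records the limit of $x_k/n$, so that $x^{(n)}\in\mathbb{X}_n$ tends to $\theta\in\partial\mathbb{X}$ precisely when $x^{(n)}_k/n\to\theta_k$ for every $k$ — says that $(m_n)$ converges, in the space of probability measures on $\mathbb{X}\sqcup\partial\mathbb{X}$, to the Dirac mass at the boundary point $\theta=(0,0,\dots)$. Thus $\delta_\theta$ with $\theta=0$ is the unique limit point of $(m_n)$.

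By Corollary~\ref{coro} there is therefore a unique MERW, and it is the central Markov chain attached to the NNHF $\varphi^\ast=\varphi_0$ obtained from the representation \eqref{eq:representationint}. Substituting $\theta=0$ into the formula $\varphi_\theta(x_1,\dots,x_k)=\theta_1^{x_1-1}\cdots\theta_k^{x_k-1}(1-\theta_1)\cdots\left(1-\sum_{j=1}^{k-1}\theta_j\right)$ recalled in Section~\ref{sec:agregmodel} gives $\varphi_0(x)=\mathds{1}\{x=(1,\dots,1)\}$, so $\mathbb{X}^\ast$ reduces to the single path $(1)\nearrow(1,1)\nearrow\cdots$; equivalently, in the description of the associated ergodic central Markov chain, the probability of joining any existing group is $\theta_k=0$ while the probability of opening a new group when $n$ groups are present is $1-\sum_{j=1}^n\theta_j=1$. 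This is exactly the deterministic process in which each newcomer creates a new group, which proves the claim. The only genuinely non‑routine step is the middle one — turning the first‑order behaviour of a uniform random set partition into the convergence $m_n\to\delta_0$; if one prefers to be self‑contained, the crude union bound $m_n(\exists\text{ block of size}\geq m)\leq\sum_{j\geq m}\binom{n}{j}B_{n-j}/B_n$ together with the super‑exponential growth of $B_n$ (so that $B_{n-\lceil\varepsilon n\rceil}/B_n$ decays faster than $\binom{n}{\lceil\varepsilon n\rceil}$ grows) already forces $m_n(\{x_k\geq\varepsilon n\})\to 0$ for each fixed $k$ and $\varepsilon>0$, and everything else is a direct application of Corollary~\ref{coro} and the formulas of Section~\ref{sec:agregmodel}.
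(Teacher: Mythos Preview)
Your proposal is correct and follows essentially the same approach as the paper, which simply states that the result is obtained ``in the same manner as Proposition~\ref{plancherelgrowth}'': identify $m_n$ with the law of a uniform random set partition, use the cited asymptotics from \cite{Stam,Pitman} to see that all block proportions $x_k/n$ vanish, and conclude via Corollary~\ref{coro} that the unique limit point is $\delta_{\theta=0}$. Your write-up is in fact more explicit than the paper's, spelling out the convergence $m_n\to\delta_0$ and the resulting $\varphi_0$ and support $\mathbb{X}^\ast$, and the optional self-contained bound via $B_{n-j}/B_n$ is a nice addition not present in the paper.
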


\begin{rem}
	If the number of groups is limited by $N$, it is not difficult to see that the unique MERW corresponds to $\theta_k = {1}/{N}$ for all $1 \leq k \leq N$ and $\theta_k = 0$ for all $k > N$.
\end{rem}

\begin{rem}
	Let $S(n,k)$ be the Stirling number of the second kind, which represents the number of ways to partition $\{1, \cdots, n\}$ into $k$ subsets. 
	
	Then by setting $\theta_i \equiv X^{-1}$ and using equation (\ref{comb2}), we can derive and reinterpret the well-known combinatorial identity:
	\begin{equation}\label{stirling}
	\sum_{k=1}^n S(n,k) \prod_{j=1}^{k-1}(X-j) = X^n.
	\end{equation}	
\end{rem}

\subsection{The Kingman's law and the Chinese restaurant process}  

Similarly to Section \ref{sec:randomenv}, consider a random point of the Martin boundary, given here by  $\theta_1=U_1$ and, for all $n\geq 1$,
\begin{equation}
\theta_{n+1}=U_{n+1}(1-\theta_1-\cdots-\theta_n),
\end{equation}
where $(U_n)_{n\geq 1}$ are \emph{i.i.d.\@} beta random variables distributed as $\beta(1,\gamma)$. Note also that 
$$\theta_n=U_n(1-U_{n-1})\cdots(1-U_1).$$
    
It is well-known that the order statistics $\theta_{(1)}>\theta_{(2)}>\cdots$ follow the so-called  Kingman's law with parameter $\gamma$, corresponding to the Poisson-Dirichlet distribution ${\rm PD}(0,\gamma)$. We refer to \cite{Pitman2} for more details. 

To derive the annealed NNHF $\overline \varphi$, similarly to that in (\ref{mean}), we can observe that $(\theta_1,\cdots,\theta_k)$ has for density on $(0,1)^k$ the function 
\begin{equation}
f(s_1,\cdots,s_k)=\frac{1}{\gamma^k}\frac{(1-s_1-\cdots-s_k)^{\gamma-1}}{(1-s_1)\cdots(1-s_1-\cdots-s_{k-1})}.
\end{equation}

Thereafter, one can check that $\overline \varphi$ can be obtained by computing 
\begin{equation}
\frac{1}{\gamma^k}\int_{(0,1)^k} s_1^{x_1-1}\cdots s_k^{x_k-1}(1-s_1-\cdots-s_k)^{\gamma-1} ds_1\cdots ds_k.
\end{equation}
This integral is related to the Generalized Dirichlet distribution in \cite{Wong}. We get 
$$\overline \varphi(x_1,\cdots,x_k)=\frac{1}{\gamma^k}\prod_{i=1}^k \beta(x_i,x_{i+1}+\cdots+x_k+\gamma).$$

Assuming that $x_1+\cdots+x_k=n$, the transition probabilities of the corresponding central Markov chain are given by 
\begin{equation}
\overline p((x_1,\cdots,x_k);(x_1,\cdots,x_i+1,\cdots,x_k))=\frac{x_i}{n+\gamma},
\end{equation}
and
\begin{equation}
\overline p((x_1,\cdots,x_k);(x_1,\cdots,x_k,1))=\frac{\gamma}{n+\gamma}.
\end{equation} 
This corresponds to the Chinese restaurant process, as detailed in \cite[p. 92]{Aldous} and \cite{Pitman2}. 

Again, and surprisingly, Theorem \ref{convcentral} and the above computation allow us to derive its asymptotics in a simpler way, independently of the theory of exchangeable partitions.

\begin{coro}\label{chinese}
	Let $(X_1(n), X_2(n), \cdots)_{n\geq 0}$ be the central Markov chain corresponding the the Chinese restaurant process. Consider $X_{(1)}(n)\geq X_{(2)}(n)\geq \cdots$ the order statistics. 
	
	Then, we have the following convergence in distribution as $n$ tends to infinity:
	$$ \left( X_{(i)}(n) \right)_{i\geq 1} \;\underset{n\to\infty}{\Longrightarrow}\; {\rm PD}(0,\gamma). $$
\end{coro}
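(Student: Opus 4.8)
The plan is to reduce the statement to the convergence result for ergodic central Markov chains on the infinite comb, established in Theorem \ref{convcentral} (appealed to in Corollary \ref{coro:BST} in exactly this spirit), together with the explicit description of the Martin boundary $\partial\mathbb X$ of the comb skeleton given just above. Recall the annealed NNHF $\overline\varphi$ was obtained as $\overline\varphi=\mathbb E[\varphi_\theta]$, where $\theta=(\theta_k)_{k\ge1}$ is the random boundary point with $\theta_{n+1}=U_{n+1}(1-\theta_1-\cdots-\theta_n)$ and $(U_n)_{n\ge1}$ i.i.d.\@ $\beta(1,\gamma)$; and the Chinese restaurant process is precisely the central Markov chain attached to $\overline\varphi$. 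So the first step is to record that the Chinese restaurant chain is a \emph{mixture} over $\theta\in\partial\mathbb X$ of the ergodic central Markov chains $\mu_\theta$, the mixing measure being the law of the random sequence $\theta$ above, which by construction is Kingman's ${\rm PD}(0,\gamma)$ (after ordering).

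Second, I would invoke the ergodicity/convergence statement (Theorem \ref{convcentral}): for a fixed boundary point $\theta$, the chain $\mu_\theta$ satisfies $x_k(n)/n\to\theta_k$ almost surely for every $k$, since $\theta_k$ is exactly the asymptotic proportion of boxes in the $k$-th tower, as spelled out in the description of $\partial\mathbb X$. Under $\mu_\theta$ therefore the whole rescaled vector $(x_k(n)/n)_{k\ge1}$ converges a.s.\@ to $(\theta_k)_{k\ge1}$ in the product topology, and hence the order statistics $(x_{(i)}(n)/n)_{i\ge1}$ converge a.s.\@ to $(\theta_{(i)})_{i\ge1}$ (ordering is continuous on the relevant space of summable decreasing sequences — this is where a small amount of care is needed, see below). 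Since $\sum_k x_k(n)=n$, no mass escapes in the limit, so the convergence is genuinely in the simplex of ${\rm PD}$. Integrating over the mixing measure (dominated convergence, as all quantities are bounded by $1$) then gives the convergence in distribution of $(X_{(i)}(n))_{i\ge1}$ to the law of $(\theta_{(i)})_{i\ge1}$, which is ${\rm PD}(0,\gamma)$ by definition of Kingman's law.

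The main obstacle I anticipate is purely topological/measure-theoretic rather than combinatorial: one must choose the right topology on the space of (unordered, then ordered) nonnegative summable sequences so that (a) the map ``sort into decreasing order'' is continuous, (b) the almost sure coordinatewise limit $x_k(n)/n\to\theta_k$ upgrades to convergence of the ordered sequence, and (c) no mass is lost in the limit, i.e.\@ $\sum_i\theta_{(i)}$ can fall below $1$ and that is fine, but we need the limit to live in the right Poisson–Dirichlet space. The standard fix is to work with the topology of pointwise convergence on the ordered simplex $\nabla_\infty=\{\theta_{(1)}\ge\theta_{(2)}\ge\cdots\ge0,\ \sum\theta_{(i)}\le1\}$, which is compact, so that tightness is automatic and one only needs to identify limit points; combined with the conservation $\sum_k x_k(n)=n$ this pins down the limit.

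Finally I would note that this argument reproves the Poisson–Dirichlet limit for the Chinese restaurant process without appealing to the theory of exchangeable partitions, as advertised: the exchangeability is replaced here by the centrality of the Markov chain and the bijection between ergodic central measures and the Martin boundary recalled in Section \ref{sec:central} and Appendix \ref{appendix}.
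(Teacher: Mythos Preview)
Your proposal is correct and follows essentially the same route as the paper: the Chinese restaurant process is identified as the central Markov chain with NNHF $\overline\varphi=\mathbb E[\varphi_\theta]$, and then Theorem \ref{convcentral} (specifically the exit distribution \eqref{exit}) is invoked to conclude that the rescaled coordinates converge to the random boundary point $\theta$, whose ordered version is ${\rm PD}(0,\gamma)$ by construction. The paper leaves the topological details you worry about implicit, simply stating that the result follows from Theorem \ref{convcentral} and the preceding computation; your added care about the ordering map and the compact simplex $\nabla_\infty$ is a welcome precision but not a different argument.
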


\subsection{A simple extension} 

A natural extension of the model in Section \ref{sec:agregmodel} is to allow boxes to be placed to the left of an existing box, while still maintaining the tree growth model structure. We refer to Figure \ref{bidim} for an illustration.

One can easily show that the Martin boundary in this case is given by
\begin{equation}
\left\{\alpha, \beta \in [0,1], (\theta_k)_{k\in\mathbb{Z}} \in [0,1]^{\mathbb{Z}} : \sum_{k=1}^\infty \theta_k \leq \alpha, \sum_{k=1}^\infty \theta_{-k} \leq \beta, \alpha + \beta + \theta_0 = 1 \right\}.
\end{equation}
Here, $\theta_k$ represents the asymptotic proportion of squares with their bottom-left corner at $(k,j)$ for some $j$. The parameters $\alpha$ (resp. $\beta$) represent the proportion of boxes with their bottom-left corner at $(i,j)$, where $i\geq 1$ (resp. $i\leq -1$). 

Additionally, the corresponding extremal harmonic function is given by
\begin{equation}
\varphi_{(\theta,\alpha,\beta)}((x_{-p},\cdots,x_0,\cdots,x_q))=\alpha^q\beta^p\prod_{k=-p}^q \theta_k^{x_k-1}\prod_{k=1}^{q-1}\left(1-\frac{\theta_k}{\alpha}\right)\prod_{k=1}^{p-1}\left(1-\frac{\theta_{-k}}{\beta}\right).
\end{equation}

Taking into account symmetry and Proposition \ref{trivial}, one can easily verify that the only MERW is characterized by $\theta\equiv 0$ and $\alpha = \beta = {1}/{2}$. It can be viewed as a simple symmetric random walk on $\mathbb{Z}$.

\begin{figure}[H]
	\centering
	\includegraphics[width=0.6\textwidth]{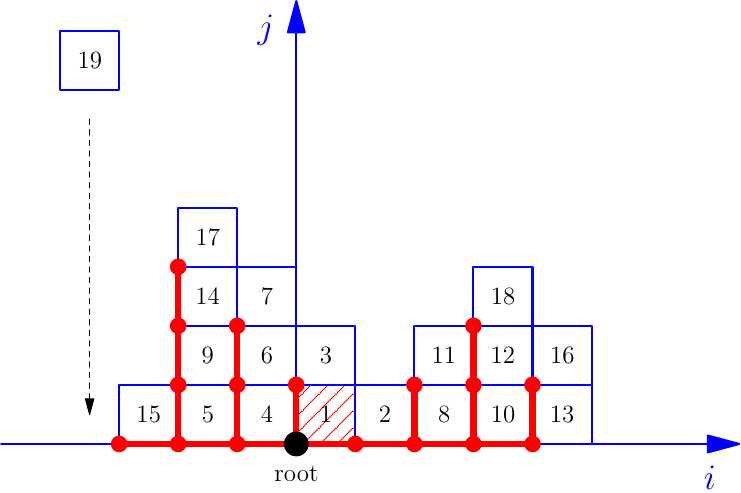}
	\captionsetup{width=0.8\textwidth}
	\caption{A path  and its underlying tree structure.}
	\label{bidim}
\end{figure}


\subsection{Growing pyramidal diagrams}

In order to obtain non-trivial MERWs, a particularly challenging problem is to incorporate constraints on the shape within the aggregation model illustrated in Figure \ref{bidim}.

Consider the following variant: let $h_i$, for $i \in \mathbb{Z}$, denote the number of squares whose bottom-left corners are located at $(i,j)$. In cases where $h_k \geq 1$, a box may be placed above, to the right, or to the left of the $k$th tower, but this is subject to the restriction that the resulting configuration does not produce a scenario where $h_k < \min(h_{k-1}, h_{k+1})$.  

In other words, only configurations that maintain a pyramidal shape are permitted. Figure \ref{pyra} provides an example of such a configuration.

\begin{figure}[H]
	\centering
	\includegraphics[width=0.45\textwidth]{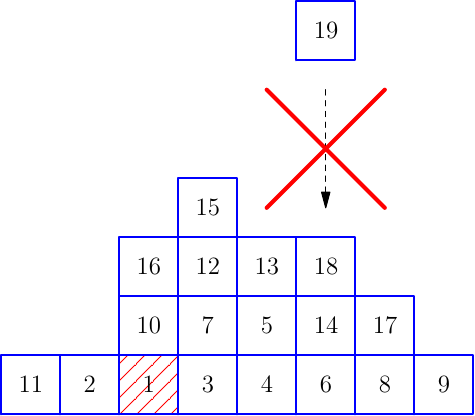}
	\captionsetup{width=0.8\textwidth}
	\caption{An example of a path of pyramidal shapes.}
	\label{pyra}
\end{figure}

It is important to note that the BD associated with this variant does not correspond to a tree growth model (unfortunately). 

To assess the level of difficulty, consider a scenario where the bases of the pyramids are restricted to a maximum size of three. Denote by $l$, $c$, and $r$ the three possible choices for the abscissa at each step. 

This model, which seems much simpler than the previous one, is linked (in the sense of Section \ref{section:connexion}) to the Kreweras's random walk in the three-quarter plane.

This walk occurs on $\mathbb{Z}^2 \setminus \{(i,j) : i,j < 0\}$ with steps in $\{\text{North-East; South; West}\}$. We refer to Figure \ref{krew} for an illustration.

\begin{figure}[H]
	\centering
	\includegraphics[width=0.7\textwidth]{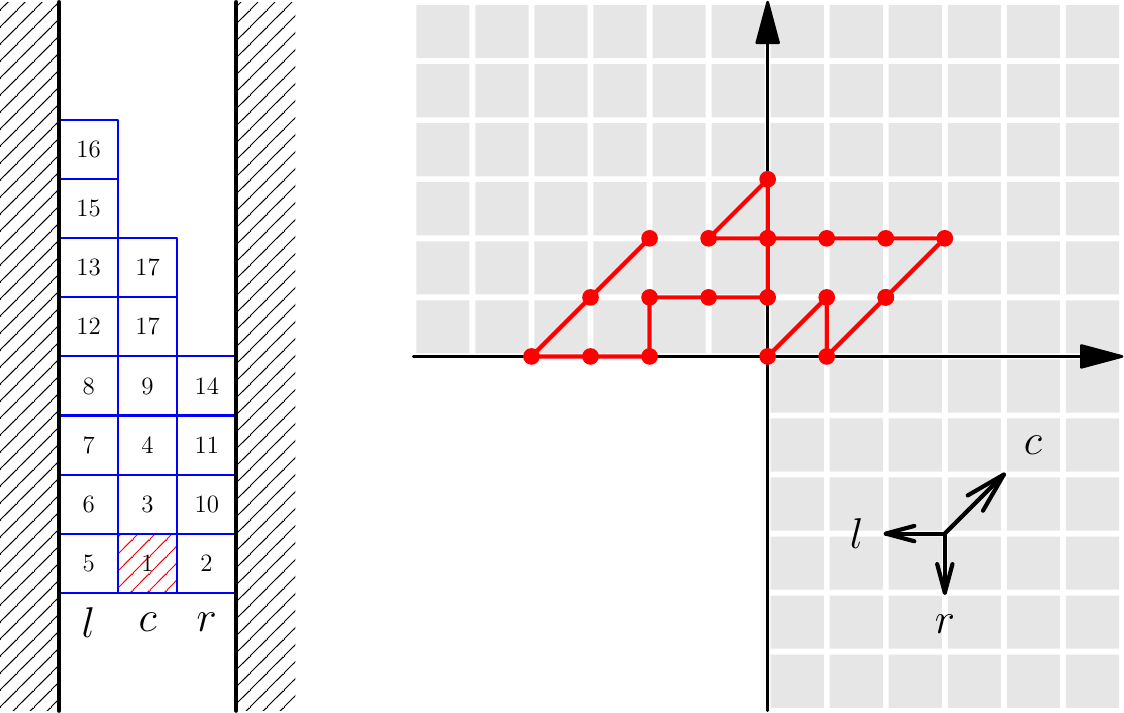}
	\captionsetup{width=0.8\textwidth}
	\caption{Correspondence between the Kreweras's random walk and a paths in the BD of pyramidal diagrams.}
	\label{krew}
\end{figure}

The enumeration of walks with small steps in cones has been a difficult combinatorial task, often requiring powerful algebraic methods. The generating function for the total number of walks originating from the origin is detailed in \cite{bousquet}, yielding the asymptotic 
\begin{equation}
a_n(0,0) \sim \frac{3^{3/4} \sqrt{2-\sqrt{2}}}{\Gamma(5/8)} \frac{1}{n^{3/8}} 3^n.
\end{equation}

To find central Markov chains and thus MERWs, we need to investigate the limit of the ratio 
\begin{equation}
\frac{a_{n-k}(i,j)}{a_n(0,0)},
\end{equation}
 as $n \to \infty$, where $a_m(i,j)$ represents the total number of walks of length $m$ starting from $(i,j)$. 
 
 Unfortunately, the generating functions in \cite{bousquet} focus on $c_n(i,j)$, the number of walks of length $n$ starting from the origin and ending at $(i,j)$, which is not directly applicable here. Nonetheless, it is reasonable to conjecture that $a_n(i,j)$ adheres to the same asymptotic as $a_n(0,0)$, modulo a constant $H_{i,j}$. Consequently, $H$ can be identified as a unique positive harmonic function for Kreweras's random walk in the three-quarter plane, with $H_{0,0} = 1$, as established by \cite{Trotignon}. 
 
 We make the following conjecture.
\begin{conj}
	The unique MERW for the growing pyramidal model depicted in Figure \ref{krew} is characterized by the following transition probabilities. Given $(u,v,w)$ as a permissible configuration derived from $(x,y,z)$ by incrementing one of its coordinates by $1$:
	\begin{equation}
	p((x,y,z);(u,v,w)) = \frac{H_{v-u,v-w}}{3\, H_{y-x,y-z}},
	\end{equation}
where $H$ is the unique non-negative solution on $\mathbb Z^2$ of  $H_{i,j}=0$ for $i,j< 0$, $H_{0,0}=1$ and for all $i\geq 0$ or $j\geq 0$, 
$3 H_{i,j}=H_{i-1,j}+H_{i,j-1}+H_{i,j}$.
\end{conj}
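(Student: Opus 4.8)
The plan is to reduce the conjecture, through the dictionary of Section~\ref{section:connexion} between the pyramidal BD and the Kreweras walk, to a single precise asymptotic for the total number of such walks, and then to read the MERW off its NNHF exactly as in the proof of Proposition~\ref{prop:pascalweight}. Concretely, one encodes a pyramidal diagram whose three columns have heights $(x,y,z)$ by the lattice point $\pi(x,y,z)=(y-x,\,y-z)$, so that the root maps to $(0,0)$, raising the left, middle, or right column corresponds to the Kreweras steps $(-1,0)$, $(1,1)$, $(0,-1)$, and the pyramidal constraint $h_k\ge\min(h_{k-1},h_{k+1})$ becomes exactly the requirement that the walk stays in the three-quarter plane $\Lambda=\mathbb Z^2\setminus\{(i,j):i,j<0\}$, with the convention $a_m(i,j)=0$ off $\Lambda$. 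With this identification one has, for a diagram $\sigma$ sitting at level $n$,
\[
d\big((n,\sigma);\mathbb X_N\big)=a_{N-n}\big(\pi(\sigma)\big),\qquad\text{hence}\qquad \varphi_N(n,\sigma)=\frac{a_{N-n}(\pi(\sigma))}{a_N(0,0)} .
\]
Since each $a_m(i,j)$ is finite, Assumption~\ref{Ass2} holds; by Definition~\ref{def}, identifying the MERWs is the same as identifying the pointwise limit points of this sequence.

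The key input to establish is the transfer estimate
\[
a_n(i,j)\ \underset{n\to\infty}{\sim}\ H_{i,j}\;\frac{3^{3/4}\sqrt{2-\sqrt 2}}{\Gamma(5/8)}\;\frac{3^n}{n^{3/8}}
\]
for every $(i,j)\in\Lambda$, with the \emph{same} exponential rate $3^n$, the \emph{same} polynomial correction $n^{-3/8}$ and the \emph{same} leading constant as for $a_n(0,0)$, where $H$ is the non-negative harmonic function of the Kreweras walk on $\Lambda$ normalised by $H_{0,0}=1$, whose existence and uniqueness are supplied by \cite{Trotignon}. Granting this, $\varphi_N(n,\sigma)\to H_{\pi(\sigma)}\,3^{-n}$ for each fixed $\sigma$, so the sequence has a single limit point and Definition~\ref{def} yields a \emph{unique} MERW, characterised by $\varphi^\ast(n,\sigma)=H_{\pi(\sigma)}\,3^{-n}$. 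That $\varphi^\ast$ is an NNHF in the sense of Definition~\ref{defsat} is then immediate: non-negativity comes from $H\ge 0$, $\varphi^\ast(\varnothing)=H_{0,0}=1$, and multiplying the relation $\varphi^\ast(\sigma)=\sum_{\sigma\nearrow\sigma'}\varphi^\ast(\sigma')$ by $3^{n+1}$ turns it into the harmonicity equation for $H$ on $\Lambda$, a move forbidden at the boundary being exactly one whose target value of $H$ vanishes. Finally $w\equiv 1$ here, so the MERW kernel is $p(\sigma,\sigma')=\varphi^\ast(\sigma')/\varphi^\ast(\sigma)$, which for $\sigma\leftrightarrow(x,y,z)$ and $\sigma'\leftrightarrow(u,v,w)$ is precisely $p((x,y,z);(u,v,w))=H_{v-u,v-w}/(3\,H_{y-x,y-z})$.

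The entire difficulty is concentrated in the transfer estimate, and this is the step I expect to be the real obstacle: \cite{bousquet} computes generating functions for walks with a \emph{fixed} endpoint started from the origin, i.e.\ the quantities $c_n(i,j)$, whereas the relevant object here is the endpoint-free count $a_n(i,j)$. Two routes seem available. The analytic route is to extract the generating function $\sum_{n\ge 0}a_n(i,j)z^n$ --- for instance by reversing the steps and summing out the endpoint variable in the kernel-method and conformal-map expressions of \cite{bousquet} --- to locate its dominant singularity at $z=1/3$, to check that it is of $(1-3z)^{-5/8}$ type with analytic continuation to a $\Delta$-domain, and to apply the transfer theorem of \cite{Flajo,FlajoB} as in Proposition~\ref{prop:pascalweight}, the constant multiplying the singular part being $H_{i,j}$ after dividing by the $(0,0)$ case. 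The probabilistic route is an $h$-transform / ratio-limit argument: the Kreweras walk killed on leaving $\Lambda$ has, by \cite{Trotignon}, a one-dimensional cone of non-negative harmonic functions spanned by $H$, so a statement of triviality of the minimal Martin boundary of the killed walk --- in the spirit of Denisov--Wachtel cone asymptotics --- would give $a_n(i,j)/a_n(0,0)\to H_{i,j}$, and the known behaviour of $a_n(0,0)$ then upgrades this to the full asymptotic. In either route the genuine obstruction is that $\Lambda$ is \emph{non-convex}, so the classical theory of random walks in cones does not apply directly and one must invoke the conformal-gluing techniques developed specifically for the three-quarter plane; a minor additional point is merely to match the normalisation of $H$ in \cite{Trotignon} with $H_{0,0}=1$. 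Note that only pointwise convergence of $\varphi_N(n,\sigma)$ for each fixed $\sigma$ is needed, so no uniformity of the estimate in $(i,j)$ is required.
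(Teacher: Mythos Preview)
The statement you are attempting to prove is a \emph{conjecture} in the paper: there is no proof of it there. The paper's discussion immediately preceding the conjecture makes exactly the reduction you carry out --- it observes that the relevant quantity is the ratio $a_{n-k}(i,j)/a_n(0,0)$, notes that the generating functions of \cite{bousquet} address the endpoint-fixed counts $c_n(i,j)$ rather than the endpoint-free counts $a_n(i,j)$, and then \emph{conjectures} (rather than proves) that $a_n(i,j)$ has the same asymptotic as $a_n(0,0)$ up to the constant $H_{i,j}$, with $H$ identified via \cite{Trotignon}. Your ``transfer estimate'' is precisely this conjectured asymptotic, and you are right to flag it as the genuine obstacle.

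So your write-up is not a proof but a correct and well-organised conditional argument: \emph{if} the transfer estimate holds, then the MERW is unique and given by the stated formula. This matches the paper's own reasoning, only made more explicit (you spell out the encoding $\pi(x,y,z)=(y-x,y-z)$, the identification $d((n,\sigma);\mathbb X_N)=a_{N-n}(\pi(\sigma))$, and the verification that $\varphi^\ast$ is an NNHF). The two routes you sketch for the missing step --- singularity analysis of the endpoint-summed generating function in a $\Delta$-domain, or a ratio-limit/Martin boundary argument for the killed walk --- are reasonable programmes, and you correctly identify the non-convexity of the three-quarter plane as the reason neither is routine. But neither route is carried out, so what you have is a roadmap for a proof of the conjecture, not a proof; this is consistent with the paper, which leaves the statement open.
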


\section{Numerical Simulations}

\label{sec:algo}
\setcounter{equation}{0}

In general, explicitly computing MERWs or central Markov chains is almost miraculous.  

However, one might wonder whether efficient numerical simulations can be performed to estimate the combinatorial dimension in (\ref{finiteMERW}) and approximate the transition probabilities.  

Obviously, recursively counting the number of paths (focusing only on the unweighted case) is generally unrealistic due to the exponential (or even worse) growth of such numbers.  

This is why we advocate a Monte Carlo (MC) method.

\subsection{The Algorithm}

We employ the Knuth's algorithm, as presented in \cite{Knuthtree}, to enumerate the leaves of a tree through MC simulations. 

Several adaptations of the original Knuth algorithm have been proposed, all aiming to reduce variance. For instance, we refer to \cite{Jensen1, Jensen2} and \cite{Cloteaux}. 

Although a BD $\mathbb{X}$ is not necessarily a tree, the set $\mathfrak R = \bigsqcup_{n \geq 0} \mathcal{T}_n$ is. We recall that $\mathcal{T}_n$ denotes the set of finite paths of length $n$ originating from the root, as introduced in Section \ref{sec:WBD}. 

A finite path $t$ is a child of another path $s$ if and only if $t$ can be obtained from $s$ by adding an additional transition at the end of $s$. 

Thus, computing $d(x, \mathbb{X}_n)$ is equivalent to enumerating the number of leaves of the finite subtree of paths of length $d = n - n_x$ starting from $x$. 

To this end, Knuth proposes to sample $d$-step trajectories $s = s_0 \cdots s_d$ of a given RW -- say having $q$ for Markov kernel --  starting from $x$, and to compute the mean of the cost function
\begin{equation}\label{Knuth}
c(s) = (q(s_0, s_1) \cdots q(s_{d-1}, s_d))^{-1},
\end{equation}
This leads to Algorithm \ref{Algo} written in pseudocode.

\begin{algorithm}[H]
	\caption{Approximation of a MERW Based on Knuth's Algorithm}   
	\begin{algorithmic}
		\STATE \textbf{Initialize:} Choose a probability kernel $q(x,y)$ on $\mathbb{X}$, a depth of exploration $d$, a sample size $N$, and a path length $n$. Set $T_0 = \varnothing$ and $T_i = t_0 \cdots t_i$ as the current path at time $i \geq 0$.  
		\FOR{each step \(i\), where \(i < n\),}
		\FOR{each $x$ such that $t_i \nearrow x$}
		\STATE Generate $N$ trajectories  $s^{(1)}, \cdots, s^{(N)}$ of size $d$ of the RW $q$ starting from $x$. 
		\STATE Calculate (see \ref{Knuth}) the mean weight:
		\begin{equation}\label{estimate}
		W(x) = \frac{1}{N} \sum_{k=1}^N c(s^{(k)}).
		\end{equation}
		\ENDFOR
		\STATE Draw a random neighbour $y$ of $t_i$ with a probability proportional to $W(y)$.
		\STATE Set $t_{i+1} = y$.
		\ENDFOR
		\RETURN the trajectory \( t_0 \cdots t_n\).
	\end{algorithmic}
	\label{Algo}
\end{algorithm}

\subsection{Applications to Pyramidal Tableau}

Here we consider the pyramidal growing model illustrated in Figure \ref{pyra}. 

The initial approach is to utilize the generic random walk (GRW) in Algorithm \ref{Algo}. We denote this method as $(RW_0)$. However, the challenge in applying Knuth's algorithm effectively is to identify a random walk (RW) for which the standard deviations of the estimates (\ref{estimate}), expressed as $s/\sqrt{N}$, are not excessively large. 

Table \ref{tab:example} provides the theoretical number of $n$-step trajectories originating from the root and the corresponding Standard Deviations (StD) $s_0$ and $s_1$ for two methods: the initial one $(RW_0)$ and another, more efficient method $(RW_1)$, which will be detailed further below.

\begin{table}[H]
	\centering
	\begin{tabular}{|c|c|c|c|c|}
		\Xhline{2\arrayrulewidth}
		\textbf{Length $n$} & \textbf{Number of Paths} & \textbf{StD  ($s_1$)} & \textbf{StD ($s_0$)} \\
		\Xhline{4\arrayrulewidth}
		1 & 3 &  (0) & (0) \\
		\hline
		2 & 11 &  (0) & (1.418)\\
		\hline
		3 & 47 &  (4.501) & (11.42) \\
		\hline
		4 & 213 & (8.615) &  (76.59)\\
		\hline
				5 & 1 013 & (23.64) & (477.9) \\
		\hline
				6 & 5 047 & (261.8) & (2823) \\
		\hline
				7 & 26 077 & (2 569) &  (17 020)\\
		\hline
				8 & 143 067 & (21 120) &  (106 400)\\
		\hline
				9 & 809 973 & (158 600) &  (596 700) \\
		\hline
				10 & 4 758 653 & (1 161 000) &  (3 759 000)\\
				\hline
		11 & 28 892 669  & (8 341 000) &  (25 570 000) \\
				\hline
		12 & 180 970 405 & (59 460 000) & (172 900 000) \\
		\hline		
		13 & 1 166 654 573  & (425 600 000) & (1 257 000 000) \\		
\hline
	\end{tabular}

\captionsetup{width=0.8\textwidth}
\caption{Number of paths originating from the root and standard deviations of their Monte Carlo estimations for two different RW kernels.}
	\label{tab:example}
\end{table}

\noindent
{\it i) The Random Walk ($RW_1$).} Given a pyramidal diagram $\Delta$, we denote by $K(\Delta)$ the number of boxes $\square$ such that $\Delta \nearrow \Delta \cup \square$, representing the out-degree in the corresponding BD. 

For instance, $K(\Delta)=8$ for the pyramidal Tableau in Figure \ref{pyra}. The transition kernel $q$ considered in Algorithm \ref{Algo} takes the form:
\begin{equation}
q(\Delta,\Delta^\prime)=\frac{K(\Delta^\prime)^\gamma}{\sum_{\square} K(\Delta \cup \square)^\gamma}.
\end{equation}

We assume that $\Delta \nearrow \Delta^\prime$ and all the $\square$ in the sum are available boxes of $\Delta$. The parameter $\gamma$ may depend on the size $n$ of $\Delta$. In Table \ref{tab:example}, we adjust $\gamma$ such that $\gamma=2.5$ when $n=1$, $\gamma=1$ when $n=d-2$, and $\gamma$ decreases linearly between these two steps, reaching $\gamma=0$ when $n=d-1$. These parameters have been consistently applied in all numerical simulations.\\

\begin{figure}[H]
	\centering
	\includegraphics[width=1\textwidth]{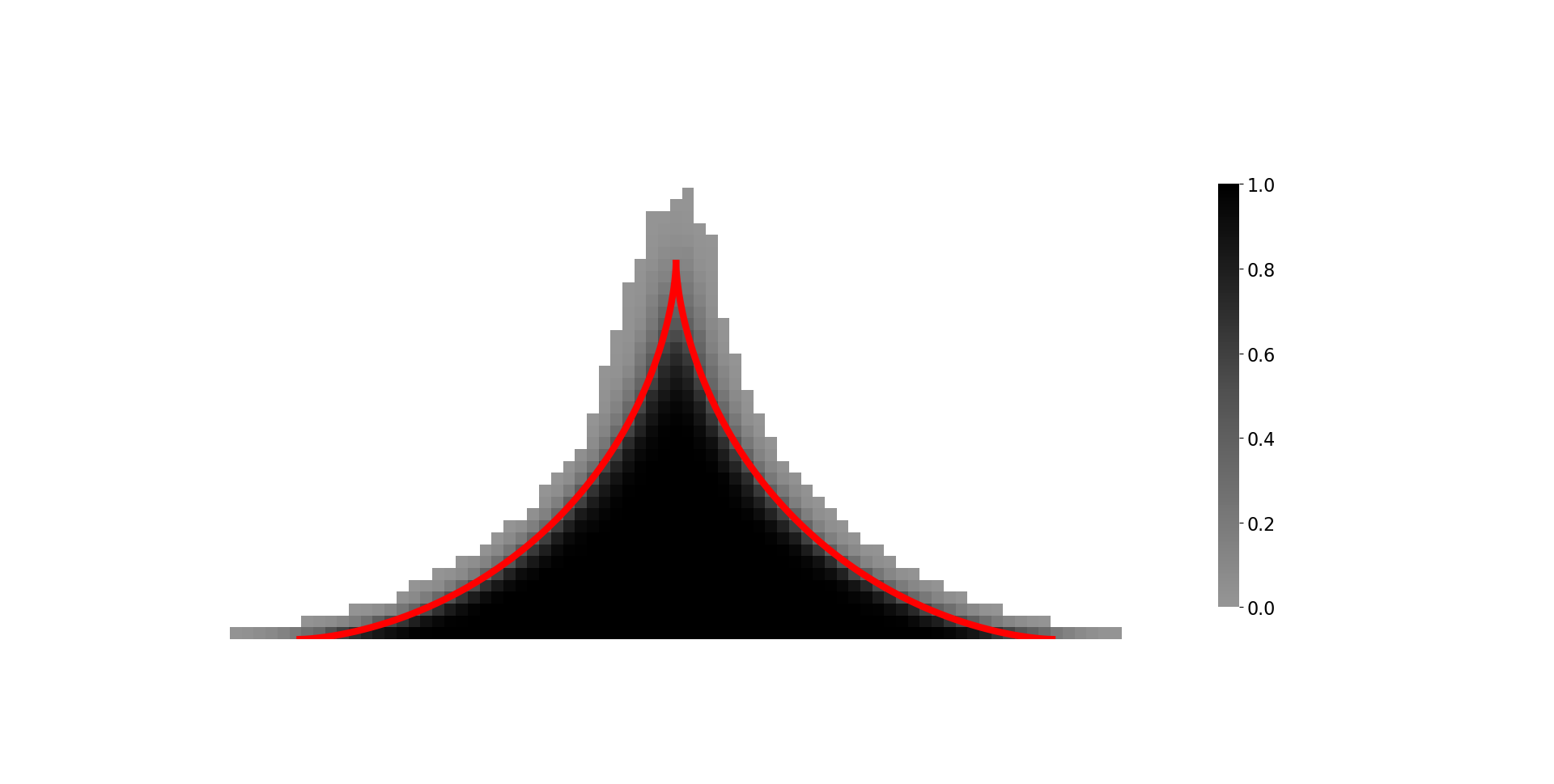}
	\captionsetup{width=0.8\textwidth}
	\caption{Heat map of approximations of the MERW Pyramidal process. Pyramid size: $n = 500$. Number of pyramid samples: $N_P = 500$. Parameters of Algorithm \ref{Algo}: $N = 100$, $d = 30$.}
	\label{MeanMERW}
\end{figure}

\noindent
{\it ii) Numerical Simulations and Conjecture.} To get Figure \ref{MeanMERW}, we performed $N_P=500$ samples of the MERW approximation with a pyramid size of $n=500$. 

The parameters used in Algorithm \ref{Algo} were $d=30$ for the depth and $N=100$ for the number of sample paths generated to obtain the estimation (\ref{estimate}). 

We then computed for each box $(i,j)$ the number of times (and subsequently the proportion) that the box appears in the final pyramid, and we drew the heatmap of these frequencies. 

Regarding Figure \ref{fig:MERW}, we simply drew one pyramid of size $n=500$ with parameter $d=10$ and sample size $N=100$.

\begin{figure}[H]
	\centering
	\includegraphics[width=0.9\textwidth]{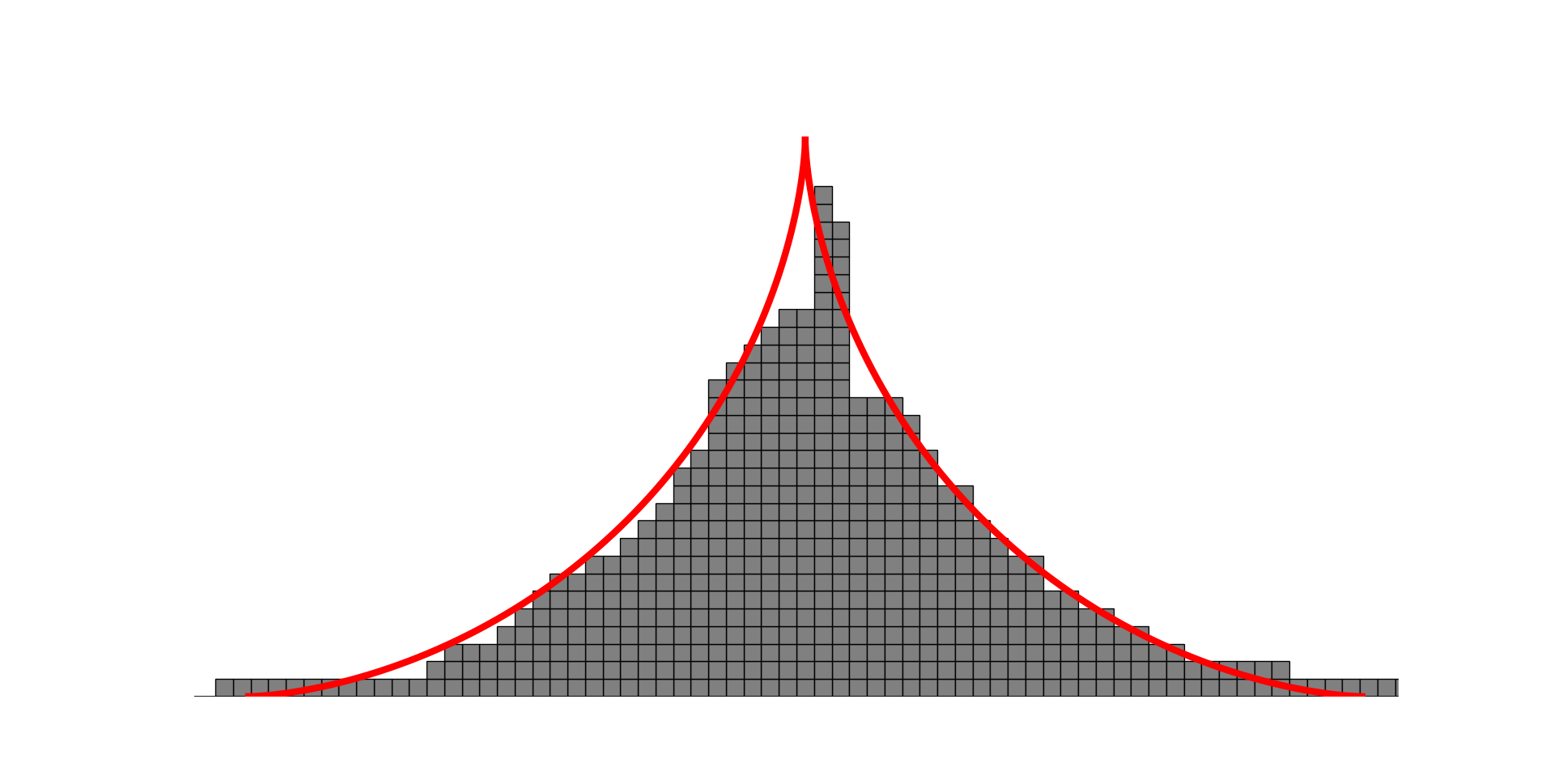}
	\captionsetup{width=0.8\textwidth}
	\caption{Pyramid size: $n=500$. Depth: $d=10$. Sample size $N=100$.}
	\label{fig:MERW}
\end{figure}

In each of these graphics, we scaled the boxes by a factor $1/\sqrt{n}$ in such a way that the area equals one. Additionally, we superposed on these figures the graph $\mathcal{G}$, the limit shape we expect for this stochastic process.

This is a symmetric version (with respect to the $y$-axis) of the classical limit shape of a Young tableau under the Plancherel measure. See \cite[p. 699]{Romik} for more details. To be more precise, introduce $\Omega : [-1,1] \to (0,\infty)$ defined by 
\begin{equation}\label{shape}
\Omega(u) = \frac{2}{\pi} \left[u \arcsin\left(u\right) + \sqrt{1-u^2}\right].
\end{equation}

Let $\mathcal{H}$ be the graph of $\Omega$ and consider $T : (u,v) \mapsto \left(\frac{u+v}{2}, \frac{v-u}{2}\right)$ and $S : (x,y) \mapsto (-x, y)$. Then, one has  
$\mathcal{G} = T(\mathcal{H}) \cup ST(\mathcal{H}).$ In light of our simulations, and similarly to the Plancherel growth process, we propose the following conjecture:
\begin{conj}
	Let $(P_n)_{n \geq 0}$ be the suitably scaled MERW Pyramidal process. The distance between the boundary of ${P_n}$ and $\mathcal{G}$ tends to zero with probability one. 
\end{conj}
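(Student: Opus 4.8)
The plan is to imitate the argument behind Proposition~\ref{plancherelgrowth} for the Young lattice and reduce the almost-sure limit-shape statement for the MERW to a concentration statement for the sequence of dimension measures $m_n$ of (\ref{marginalunif}). By Corollary~\ref{coro}, the MERWs are in bijection with the limit points $m^\ast$ of $(m_n)_{n\ge 0}$ in the space of probability measures on $\mathbb X\sqcup\partial\mathbb X$, each corresponding to the central measure with NNHF $x\mapsto\int_{\partial\mathbb X}K(x,\zeta)\,m^\ast(d\zeta)$. So it suffices to prove that $(m_n)$ has a \emph{single} limit point $\delta_{\zeta_{\mathcal G}}$, where $\zeta_{\mathcal G}\in\partial\mathbb X$ is the boundary point encoding the rescaled height profile of $\mathcal G$: then the MERW is unique and equals the ergodic central chain $\mu_{\zeta_{\mathcal G}}$, and almost-sure uniform convergence of the boundary of $P_n$ to $\mathcal G$ is automatic, since a regular boundary point carries the defining asymptotics of its coordinates $\mu_{\zeta_{\mathcal G}}$-a.s.\ (exactly as Theorem~\ref{convcentral} produces the a.s.\ asymptotics used in Corollaries~\ref{coro:BST} and~\ref{chinese}). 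In other words, the whole conjecture reduces to: a uniformly random pyramidal tableau of size $n$, rescaled by $1/\sqrt n$, has boundary converging in probability to $\mathcal G$, and $\mathcal G$ is the shape attached to a genuine boundary point.

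The core is therefore the concentration of $m_n$. I would first derive a hook-length-type product formula for $d(\varnothing,\Delta)$, the number of increasing fillings of a pyramidal diagram $\Delta$. Since the column heights of a pyramidal diagram are unimodal, $\Delta$ splits along its peak column into a left Young diagram $\lambda_L$ and a right Young diagram $\lambda_R$, and the number of standard fillings should then be a multinomial coefficient times the product $d(\varnothing,\lambda_L)\,d(\varnothing,\lambda_R)$ of two copies of the hook-length formula (\ref{hook}), corrected by a factor accounting for the coupling of the two halves across the shared base. Summing over shapes of size $n$ must recover the Kreweras count $a_n(0,0)\sim c\,3^n n^{-3/8}$ of \cite{bousquet}. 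One then solves the resulting variational problem on the exponential ($\sqrt n$) scale: as the formula factors over the two halves and the number of pyramidal shapes of size $n$ is $e^{O(\sqrt n)}$, the problem decouples into two independent copies of the Logan--Shepp--Vershik--Kerov variational problem attached to the Plancherel/Gelfand measure, whose maximizer is the curve $\Omega$ of (\ref{shape}); gluing the two optimal halves back to back yields precisely $\mathcal G=T(\mathcal H)\cup ST(\mathcal H)$. A standard large-deviations upper bound for shapes far from the optimum (entropy concentration, as in \cite{Romik_2015}) then gives concentration of $m_n$ near $\mathcal G$, hence its unique limit point $\delta_{\zeta_{\mathcal G}}$; by Corollary~\ref{coro} this identifies the unique MERW through $\varphi^\ast(x)=K(x,\zeta_{\mathcal G})$, which is where the conjectured harmonic function $H$ of \cite{Trotignon} enters, via (\ref{limitpoint}) and the dictionary of Section~\ref{section:connexion}. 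Upgrading pointwise profile convergence to uniform convergence of the boundary curve is routine once one knows $\Omega(u)=|u|$ for $|u|\ge 1$, which keeps the rescaled diagram from escaping to infinity.

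The main obstacle is squarely the combinatorial input of the previous paragraph. The pyramidal model is explicitly \emph{not} a tree-growth model, so the clean machinery of Section~\ref{section:tree} does not apply; worse, the pyramidal constraint genuinely couples the left and right halves across the shared base, and one must show this coupling is asymptotically negligible so that the variational problem actually splits into two Plancherel problems and selects $\mathcal G$ rather than some other unimodal curve. Compounding this, the generating functions available for Kreweras' walk in the three-quarter plane (\cite{bousquet,Trotignon}) track the \emph{endpoint} of the walk rather than the \emph{shape} of the region it sweeps, so extracting the refined counts $d(\varnothing,\Delta)$ — or producing a workable bijection with pairs of constrained Young tableaux amenable to RSK-type asymptotics — appears to require genuinely new combinatorics. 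An alternative route bypassing exact enumeration would be a hydrodynamic-limit argument: write a law of large numbers for the rescaled height profile of the MERW directly from its transition probabilities $H_{v-u,v-w}/(3H_{y-x,y-z})$, leading to a Hamilton--Jacobi-type variational evolution whose stationary solution is the Plancherel shape; but this presupposes sharp control of $H$, itself only conjectural here. A final, more benign point is the passage from convergence in probability to almost-sure convergence, should one prefer not to invoke the regular-boundary machinery directly: this can be handled by monotonicity of the growth process together with Borel--Cantelli along a polynomially spaced subsequence, the $O(\sqrt n)$ fluctuation between consecutive checkpoints being controlled since the diagram gains exactly one box per step.
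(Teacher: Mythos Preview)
The statement you are attempting to prove is presented in the paper explicitly as a \emph{conjecture}, motivated by the numerical simulations of Section~\ref{sec:algo}; the paper offers no proof and makes no claim that one is available. There is therefore nothing to compare your proposal against.

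What you have written is a coherent research strategy rather than a proof, and you are candid about this yourself. The reduction via Corollary~\ref{coro} to concentration of the dimension measures $m_n$ is the natural route and mirrors the logic of Proposition~\ref{plancherelgrowth}. But the steps you flag as obstacles are genuine open problems, not routine verifications: (i) there is no known hook-length-type product formula for $d(\varnothing,\Delta)$ for pyramidal diagrams, and the splitting into two Young halves is complicated by the shared peak column, so the claimed decoupling into two independent Plancherel variational problems is an assumption, not a derivation; (ii) the Kreweras generating functions indeed do not give shape statistics, so your alternative combinatorial route is currently unavailable; (iii) the hydrodynamic approach presupposes the harmonic function $H$ of the \emph{other} conjecture in the same section, which is itself open. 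In short, your outline correctly identifies what would need to be true, but each of the load-bearing steps is itself a conjecture of comparable difficulty to the statement you are trying to prove.
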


\begin{acknowledgments}
This work has been supported by the EIPHI Graduate school (contract "ANR-17-EURE-0002") and by the Région "Bourgogne Franche-Comté"

\end{acknowledgments}

\section{Appendix}
\label{appendix}   

\setcounter{equation}{0}

Our goal is to slightly extend and present the well-established results regarding central measures on Weighted Bratteli Diagrams (WBDs), as stated in \cite{kerov}, in the setting of Section \ref{sec:WBD}, where {\bf countably infinite} level sets are allowed. 

Assumption \ref{Ass2} will be made throughout this section.

\subsection{One-to-one Correspondences}
A probability measure $\mu$ on $(\mathcal{T}, \mathcal{A})$ whose support is the whole BD is central if, for all $y \in \mathbb{X}$ and $s, t \in \{\varnothing \to y\}$, it holds that $\mu(C_s)w_t = \mu(C_t)w_s$. We recall that $\{\varnothing \to y\}$ denotes the set of paths starting from the root and ending at $y$, all of them being of length $|y|$.

 For such measure, the transition probabilities, defined for all $n\geq 0$, $x\in\mathbb X_n$, and $y\in\mathbb X_{n+1}$ such that $x\nearrow y$, by
\begin{equation}
p(x,y) := \frac{\mu(C_{txy})}{\mu(C_{tx})},
\end{equation}
are independent of the choice of $t \in \mathcal{T}_{n-1}$ whenever $txy \in \mathcal{T}_{n+1}$. 

This defines a Markov kernel on the BD, and the associated Markov chain $(X_n)_{n\geq 0}$, which follows the distribution $\mu$ when it starts from the root, is termed a central Markov chain. 

Furthermore, the correspondence between central measures and Positive Harmonic Functions (PHFs), that is a positive function on $\mathbb{X}$ satisfying (\ref{harmonicc}),  is established by setting
\begin{equation}\label{corres1}
\varphi(x) = \frac{\mu(C_{ux})}{\omega_{ux}},
\end{equation}
for any $n \geq 0$, $x \in \mathbb{X}_n$, and $u \in \mathcal{T}_{n-1}$. 

Conditionally on the starting point $x$, the probability of any path $s \in \{x \to y\}$ depends solely on its weight and the endpoints $x$ and $y$. It is given by
\begin{equation}
\mathbb P(X_{n}=y,\cdots,X_1=s_1| X_0=s_0) = w_s \frac{\varphi(y)}{\varphi(x)},
\end{equation}
where $n=n_y-n_x$ denotes the length of $s$ and $s=s_0\cdots s_n$ with $s_0=x$ and $s_n=y$. 

We slightly extend the definition of a central measure.

\begin{defi}\label{defmerw}	
	A measure $\mu$ on $(\mathcal{T}, \mathcal{A})$ is a saturated central measure if its support
	\begin{equation}\label{supportcentral}
	\mathbb{S} = \{x \in \mathbb{X} : \exists n \geq 0,\, \exists u \in \mathcal{T}_{n-1},\; \mu(C_{ux}) > 0\},
	\end{equation}
	is a Saturated Sub-Bratteli Diagram (SSBD) of $\mathbb{X}$, in the sense of Definition \ref{defbrattelisat}, and if its restriction to this BD is a usual central measure. 
	
	A random walk associated with a saturated central measure will be referred to as a saturated central Markov chain.
\end{defi}

\begin{prop}\label{oneone}
	There exists a one-to-one correspondence between saturated central measures  and Non-Negative Harmonic Functions (NNHFs), as described in Definition \ref{defsat}. 
\end{prop}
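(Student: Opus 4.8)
The plan is to establish the bijection by exhibiting explicit maps in both directions and checking they are mutually inverse. Given a saturated central measure $\mu$ with support $\mathbb{S}$ (a SSBD by hypothesis), I would first restrict $\mu$ to $\mathbb{S}$, where it becomes a classical central measure, and invoke the established correspondence (\ref{corres1}) to produce a PHF $\varphi$ on $\mathbb{S}$. Then extend $\varphi$ to all of $\mathbb{X}$ by setting $\varphi(x) = 0$ for $x \notin \mathbb{S}$; this gives a candidate NNHF in the sense of Definition \ref{defsat}. Conversely, given an NNHF $\varphi$, its positivity set $\mathbb{X}_\varphi = \{x : \varphi(x) > 0\}$ should be shown to be a SSBD (exactly as in the proof of Proposition \ref{suppssbd}: if $x \nearrow y$ and $\varphi(y) > 0$ then $\varphi(x) \geq w(x,y)\varphi(y) > 0$), and on $\mathbb{X}_\varphi$ the function $\varphi$ is a genuine PHF, hence corresponds to a classical central measure $\mu$ on the paths living in $\mathbb{X}_\varphi$; viewed as a measure on $(\mathcal{T}, \mathcal{A})$ it is a saturated central measure.

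The key steps, in order, are: (i) check that for a saturated central measure the support $\mathbb{S}$ being a SSBD means the restriction is well-defined as a central measure on a bona fide WBD satisfying Assumption \ref{Ass2} (this is essentially the content of Definition \ref{defmerw} and needs only that sub-level-sets stay finite-or-countable and that $d(\varnothing,\mathbb{S}_n) \le d(\varnothing,\mathbb{X}_n) < \infty$); (ii) apply the classical correspondence to get $\varphi$ on $\mathbb{S}$ and verify the harmonicity relation (\ref{harmonicc}) extends with zero values off $\mathbb{S}$ — here one must check that for $x \in \mathbb{S}$, all successors $y$ with $\varphi(y) > 0$ are again in $\mathbb{S}$, and conversely that the sum $\sum_{x \nearrow y} w(x,y)\varphi(y)$ picks up no spurious terms, which follows because $\mathbb{S}$ is a sub-BD closed under predecessors and the central Markov chain started from the root stays in $\mathbb{S}$; (iii) in the reverse direction, reconstruct $\mu$ from $\varphi$ via $\mu(C_s) = w_s\,\varphi(s_n)$ for a path $s$ from $\varnothing$ to $s_n$, check Kolmogorov consistency using the harmonicity relation, and verify the centrality relation $\mu(C_s)w_t = \mu(C_t)w_s$ directly from this formula; (iv) confirm the two constructions are inverse to each other, which is immediate from the defining formulas since $\mu(C_{ux}) = w_{ux}\varphi(x)$ both computes $\varphi$ from $\mu$ and $\mu$ from $\varphi$.

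The main obstacle, though mild, is bookkeeping around the support: one must be careful that the notion of ``central measure on the SSBD $\mathbb{S}$'' is internally consistent — namely that a path of the central Markov chain on $\mathbb{X}$ started at $\varnothing$ never leaves $\mathbb{S}$ (so that $\mu$ is genuinely carried by paths in $\mathbb{S}$), and that conversely the zero-extension of a PHF on $\mathbb{S}$ satisfies Definition \ref{defsat} at every vertex of $\mathbb{X}$, including vertices $x \notin \mathbb{S}$ where one needs $0 = \sum_{x \nearrow y} w(x,y)\varphi(y)$, i.e. all successors of a non-support vertex are themselves non-support vertices. This last point is precisely the SSBD (predecessor-closedness) property applied contrapositively: if some successor $y$ of $x$ had $\varphi(y) > 0$ then $x$ would lie in $\mathbb{S}$, a contradiction. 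Once this is noted, everything else reduces to the routine verifications already carried out in \cite{kerov} for the full-support case, which is why the proof can be kept short.
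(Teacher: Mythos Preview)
Your proposal is correct and follows essentially the same approach as the paper's proof: both directions pass through the classical correspondence on the support SSBD and extend by zero, with the SSBD property of the positivity set established via the argument of Proposition~\ref{suppssbd}. Your treatment is in fact more careful than the paper's, which omits the explicit verification that the zero-extension is harmonic at vertices outside $\mathbb{S}$ (your contrapositive argument from predecessor-closedness); the paper simply asserts that ``one can easily check'' the two maps are inverse.
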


\begin{proof}
Given an NNHF $\varphi$, we get that $\mathbb{S} = \{x \in \mathbb{X} : \varphi(x) > 0\}$ is an SSBD of $\mathbb{X}$, as shown in the proof of Proposition \ref{suppssbd}. The restriction of $\varphi$ to this BD defines a PHF and thus a usual central measure on $\mathbb{S}$. 

Conversely, let $\mu$ be a saturated central measure on an SSBD $\mathbb{S}$ of $\mathbb{X}$, and let $\varphi : \mathbb{S} \longrightarrow ]0, \infty[$ be the corresponding PHF. Then, if we extend $\varphi$ to the whole BD $\mathbb{X}$ by setting $\varphi(x) = 0$ for all $x \in \mathbb{X} \setminus \mathbb{S}$, we obtain an NNHF for which $\mathbb{S} = \{x \in \mathbb{X} : \varphi(x) > 0\}$. 

One can easily check that these two maps are bijective and inverses of each other.
\end{proof}

\begin{rem}
Given a NNHF $\varphi$ and $n\geq 0$,  one has the general combinatorial identity 
	\begin{equation}\label{combipower}
	\sum_{y \in \mathbb{X}_n} d(\varnothing, y) \varphi(y) = 1.
	\end{equation}
We recall that the combinatorial dimension $d(\varnothing, y)$ is defined in Section \ref{sec:WBD}. It is simply the weighted number of trajectories from $\varnothing$ to $y$.
\end{rem}

\subsection{Martin boundary representation}

For this section, we continue to follow \cite{kerov} but also take inspiration from the Martin boundary construction presented in \cite{sawyer}. 

First, introduce the Martin kernel, defined for all $x, z \in \mathbb{X}$ by
\begin{equation}\label{majkernel}
K(x, z) := \frac{d(x, z)}{d(\varnothing, z)} \leq C_x := \frac{1}{d(\varnothing, x)}.
\end{equation}

Thereafter, consider  a summable family of positive numbers $(\varepsilon_x)_{x \in \mathbb{X}}$ and introduce the distance on $\mathbb X$ defined  by
\begin{equation}\label{metric}
\rho(y, z) = \sum_{x \in \mathbb{X}} \varepsilon_x \frac{|K(x, y) - K(x, z)|+|\delta_{x,y}-\delta_{x,z}|+|\delta_{n_x,n_y}-\delta_{n_x,n_z}|}{C_x + 1}.
\end{equation} 
We recall that $\delta_{a, b}$ is the usual Kronecker $\delta$-symbol, which equals $1$ when $a = b$ and $0$ otherwise, and $n_x \in \mathbb{N}$ denotes the level set number of the BD to which $x$ belongs. 

The completion of $\mathbb{X}$ with respect to the metric $\rho$ will be denoted by $\overline{\mathbb{X}}$, and the completion of each $\mathbb{X}_n$ by $\overline{\mathbb{X}}_n$. The $\delta$-terms in (\ref{metric}) ensure that if a sequence $(y_n)_{n \geq 0}$ in $\mathbb{X}$ converges to some $y \in \overline{\mathbb{X}}_n$, then it is either ultimately constant with $y \in \mathbb{X}_n$, or ultimately belongs to the $n$th level set with $y \notin \mathbb{X}_n$.

From the upper bounds in (\ref{majkernel}), we easily deduce that $\overline{\mathbb{X}}$ is a compact set. Moreover, it can be observed that $\bigsqcup_{n \in \mathbb{N}} \overline{\mathbb{X}}_n$ is an open subset of $\overline{\mathbb{X}}$.
 Consequently, the boundary of the BD,
\begin{equation}
\partial \mathbb{X} = \overline{\mathbb{X}} \setminus \bigsqcup_{n \in \mathbb{N}} \overline{\mathbb{X}}_n,
\end{equation}
is a non-empty compact set. 

An infinite path $t \in \mathcal{T}$ is regular if it converges to a point $\zeta \in \partial \mathbb{X}$.  In such cases, we define
\begin{equation}\label{harmext}
\varphi_{\zeta}(x) = \lim_{n \to \infty} K(x,t_n)= \lim_{n \to \infty} \frac{d(x, t_n)}{d(\varnothing, t_n)}.
\end{equation}
It follows that \(\varphi_\zeta(\varnothing) = 1\) and the function \(\zeta \longmapsto \varphi_\zeta(x)\) is continuous. Sometimes, for greater clarity, the function $\varphi_\zeta$ is also denoted as $K(\cdot, \zeta)$.

 Furthermore, applying the dominated convergence theorem with the help of Assumption \ref{Ass2} and the upper bound (\ref{majkernel}), we deduce from
\begin{equation}\label{pseudoharmonic}
K(x, z) = \sum_{x \nearrow y} w(x, y) K(y, z) + C_x \delta_{x, z},
\end{equation}
that \(\varphi_\zeta\) is an NNHF.
We shall denote by $\mu_\zeta$ the corresponding saturated central probability measure, as given by Proposition \ref{oneone}.

Let \(\mathcal{T}^{(k)}\) be the set of infinite paths starting from the level set \(\mathbb{X}_k\), and define the random variables \(\Theta_k : \mathcal{T} \rightarrow \mathcal{T}^{(k)}\) by \(\Theta_k(t) = (t_n)_{n \geq k}\). Denote by \(\mathcal{G}_n\) the decreasing \(\sigma\)-field generated by \(\Theta_k\), \(k \geq n\), and by \(\mathcal{G}_\infty\) the tail \(\sigma\)-field \(\bigcap_{n \geq 0} \mathcal{G}_n\).
\begin{defi}
A saturated central probability measure \(\mu\) on \((\mathcal{T}, \mathcal{A})\) is ergodic if for every \(A \in \mathcal{G}_\infty\) one has \(\mu(A) \in \{0,1\}\).
\end{defi}

\begin{thm}\label{convcentral}
The ergodic saturated central measures are precisely the $\mu_\zeta$, $\zeta \in \partial \mathbb{X}$, defined above. These measures are the extremal points of the convex set of saturated central probability measures, and each saturated central probability measure can be represented as the Choquet integral 
\begin{equation}\label{choquet}
\mu = \int_{\partial \mathbb{X}} \mu_\zeta\, m(d\zeta),
\end{equation}
where \(m\) is a probability distribution on \(\partial \mathbb{X}\). Correspondingly, the associated NNHF is given by
\begin{equation}
\varphi(x) = \int_{\partial \mathbb{X}} \varphi_\zeta(x)\, m(d\zeta).
\end{equation}

Let \((X_n)_{n \geq 0}\) be the saturated central Markov chain associated with \(m\). Denote by \(\mathbb{S}\) its support and by $\mathbb{P}_x$ the distribution starting from $x \in \mathbb{S}$. Then, for all \(x \in \mathbb{S}\),
\begin{equation}\label{exit}
\lim_{n \to \infty} X_n = Z \quad \mathbb{P}_x\text{-a.s.}, \quad \text{with} \quad Z \sim \frac{\varphi_\zeta(x)}{\varphi(x)} m(d\zeta).
\end{equation}
\end{thm}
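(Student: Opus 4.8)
The plan is to reduce to the classical theory of central measures on a Bratteli diagram and then run a reverse‑martingale argument along the trajectory. By Definition \ref{defmerw} a saturated central measure $\mu$ is an honest central measure on its support SSBD $\mathbb{S}$, which is itself a Bratteli diagram satisfying Assumption \ref{Ass2}; since $\mathbb{S}$ is downward closed (Definition \ref{defbrattelisat}), one has $d(x,z)=0$ whenever $x\notin\mathbb{S}$ and $z\in\mathbb{S}$, so the boundary of $\mathbb{S}$ obtained from \eqref{metric} is exactly the closed subset $\{\zeta\in\partial\mathbb{X}:\operatorname{supp}\varphi_\zeta\subseteq\mathbb{S}\}$ of $\partial\mathbb{X}$. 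Thus it suffices to prove everything for $\mu$ with full support, working directly on $\mathbb{X}$. For the a.s.\ convergence of $(X_n)$ to the boundary, I would first compute, using centrality and the harmonic function $\varphi$ attached to $\mu$, that $\mathbb{P}_\varnothing(X_{n_x}=x\mid X_n=y)=d(\varnothing,x)\,d(x,y)/d(\varnothing,y)=d(\varnothing,x)K(x,y)$ for $n_x\le n$. Since conditioning on $X_n$ equals conditioning on $\mathcal{G}_n$ by the Markov property, $n\mapsto\mathbb{P}(X_{n_x}=x\mid\mathcal{G}_n)=d(\varnothing,x)K(x,X_n)$ is a bounded nonnegative reverse martingale for the decreasing filtration $(\mathcal{G}_n)$, hence converges $\mu$-a.s.\ and in $L^1$ by L\'evy's downward theorem. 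Therefore $K(x,X_n)$ converges a.s.\ for each of the countably many $x\in\mathbb{X}$; because the $\delta$-terms in \eqref{metric} are summable and $n_{X_n}=n\to\infty$ with the $X_n$ pairwise distinct, $(X_n)$ is $\rho$-Cauchy a.s.\ and converges to some $Z\in\partial\mathbb{X}$, so $\mu$-a.e.\ path is regular with $\varphi_Z(x)=\lim_n K(x,X_n)$ (an NNHF, as already established before the statement).

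The representation then comes essentially for free. The same computation gives $\mathbb{P}(C_s\mid\mathcal{G}_n)=w_s K(y,X_n)$ for every $s\in\{\varnothing\to y\}$, so letting $n\to\infty$ the regular conditional probability satisfies $\mathbb{P}(C_s\mid\mathcal{G}_\infty)/w_s=\mathbb{P}(C_t\mid\mathcal{G}_\infty)/w_t$ for $s,t\in\{\varnothing\to y\}$; hence $\mathbb{P}(\cdot\mid\mathcal{G}_\infty)$ is a.s.\ a saturated central measure whose NNHF is $\varphi_Z$, and by Proposition \ref{oneone} it equals $\mu_Z$. Integrating over $\omega$ yields \eqref{choquet} with $m$ the law of $Z$, and evaluating at a vertex yields the NNHF representation. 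For the exit law \eqref{exit} under $\mathbb{P}_x$, I would observe that on $\mathcal{G}_n$ the chain started at $x$ is absolutely continuous with respect to the one started at $\varnothing$, with density $K(x,X_n)/\varphi(x)$ (again a reverse martingale of constant mean $1$); passing to the limit gives $d\mathbb{P}_x/d\mathbb{P}_\varnothing=\varphi_Z(x)/\varphi(x)$ on $\mathcal{G}_\infty$, which applied to $\mathcal{G}_\infty$-measurable functions of $Z$ is precisely \eqref{exit}.

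Next, extremality. The affine bijection of Proposition \ref{oneone} identifies the saturated central measures with a compact convex metrizable set of NNHFs (contained in $\prod_{x\in\mathbb{X}}[0,1/d(\varnothing,x)]$ by \eqref{combipower}). The conditioning argument above shows that $\mu$ is ergodic iff $Z$ is $\mu$-a.s.\ constant iff $\mu=\mu_{\zeta_0}$ for a unique $\zeta_0\in\partial\mathbb{X}$ — uniqueness holding because $\zeta\mapsto\varphi_\zeta$ is injective on $\partial\mathbb{X}$, the $\delta$-terms in \eqref{metric} having been arranged so that $\rho$ restricted to $\partial\mathbb{X}$ only detects the kernels $K(x,\cdot)$ — and that $\mu$ is extreme iff ergodic: if $A\in\mathcal{G}_\infty$ with $0<\mu(A)<1$, then $\mu(\cdot\mid A)$ is again a saturated central measure (its conditional cylinder ratios are those of $\mu$ by the Markov property, and its support stays an SSBD) and $\mu$ is a proper convex combination of $\mu(\cdot\mid A)$ and $\mu(\cdot\mid A^c)$. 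This already gives that the extreme points are contained in $\{\mu_\zeta:\zeta\in\partial\mathbb{X}\}$, together with the Choquet decomposition \eqref{choquet}.

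I expect the main obstacle to be the reverse inclusion, namely that \emph{every} $\mu_\zeta$ is ergodic — equivalently that each $\varphi_\zeta$ is a minimal harmonic function, equivalently that the compactification built from \eqref{metric} has boundary equal to the minimal Martin boundary. This is not a soft point: it is the classical boundary identification of \cite{kerov} (in the spirit of \cite{sawyer}), resting on the fact that the harmonic measure of any NNHF is carried by the minimal boundary, together with the Doob-transform criterion ``$\zeta$ is minimal iff the $\varphi_\zeta$-transformed chain exits at $\zeta$ almost surely''. The only genuinely new point relative to \cite{kerov} is checking that the dominated-convergence steps there survive when the level sets $\mathbb{X}_n$ are countably infinite, which is exactly what Assumption \ref{Ass2} and the bound \eqref{majkernel} provide.
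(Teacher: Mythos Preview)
Your approach is essentially the paper's: compute $\mu(C_{sx}\mid\mathcal{G}_n)=w_{sx}\,K(x,X_n)$, apply the backward martingale convergence theorem to get a.s.\ regularity of paths and $\mu(\,\cdot\mid\mathcal{G}_\infty)=\mu_Z$, integrate to obtain the Choquet representation, and split along a nontrivial tail event to show that non-ergodic implies non-extremal (your Radon--Nikodym derivation of \eqref{exit} is a cosmetic variant of the paper's ``standard conditional computation''). Your worry about the reverse inclusion---that \emph{every} $\mu_\zeta$ is ergodic---is well placed: the paper's proof does not argue this point either and, like you, tacitly relies on the boundary identification of \cite{kerov}.
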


\begin{proof}
	Let $\mu$ be an arbitrary saturated central probability measure.

	For all \(k \geq 0\), \(s \in \mathcal{T}_{k-1}\), and $x \in \mathbb{X}_k$ such that \(sx \in \mathcal{T}_k\), it can be checked that for all \(t \in \mathcal{T}\) and \(n \geq k\),
	\begin{equation}
	\mu(C_{sx}|\mathcal{G}_n)(t) = \omega_{sx}\frac{d(x, t_n)}{d(\varnothing, t_n)}.
	\end{equation}
	
	Applying the backward martingale convergence theorem, we get for \(\mu\)-almost all \(t \in \mathcal{T}\),
	\begin{equation}
	\mu(C_{sx}|\mathcal{G}_\infty)(t) = \omega_{sx}\lim_{n \to \infty} \frac{d(x, t_n)}{d(\varnothing, t_n)}.
	\end{equation}

We obtain that \(\mu\)-almost all trajectories $t \in \mathcal{T}$ are regular, as defined in (\ref{harmext}). 

Thus, one can introduce a $\mathcal G_\infty$-measurable random variable \(Z : \mathcal{T} \rightarrow \partial \mathbb{X}\) corresponding to the limit point of each trajectory. Since for any $\zeta \in \partial \mathbb{X}$ one can write
\begin{equation}
\mu_\zeta(C_{sx}) = \omega_{sx} \varphi_\zeta(x) = \omega_{sx} \lim_{n \to \infty} \frac{d(x, t_n)}{d(\varnothing, t_n)},
\end{equation}
we deduce that 
\begin{equation}
\mu(C_{sx}|\mathcal{G}_\infty)(t) = \mu_{Z(t)}(C_{sx}) \quad \mu(dt)\text{-a.s.}.
\end{equation}

As a direct consequence, we obtain the representation (\ref{choquet}), in which \(m\) represents the distribution of the exit point \(Z\) in the Martin boundary when the corresponding saturated central Markov chain starts from the root. 

We also directly obtain (\ref{exit}) when \(x = \varnothing\), and the result for arbitrary \(x\) can be deduced simply by standard conditional computation.

If \(\mu\) is ergodic, we obtain that \(Z\) is constant \(\mu\)-almost surely, and thus there exists \(\zeta \in \partial \mathbb{X}\) such that \(\mu = \mu_\zeta\). Conversely, if \(\mu\) is not ergodic, there exists \(A \in \mathcal{G}_\infty\) such that \(\mu(A)\) and \(\mu(A^c)\) are both positive. Then, consider
	\begin{equation}
	\mu(\cdot | A) = \frac{1}{\mu(A)}\int_{A}\mu_{Z(t)}(\cdot)\mu(dt) \quad \text{and} \quad \mu(\cdot | A^c) = \frac{1}{\mu(A^c)}\int_{A^c}\mu_{Z(t)}(\cdot)\mu(dt).
	\end{equation}
	
These are two saturated central probability measures for which the corresponding exit points belong respectively to \(A\) and \(A^c\) with probability one. 

Therefore, these distributions are mutually singular, indicating that \(Z\) is not \(\mu\)-almost surely constant. This completes the proof.
\end{proof}

\bibliography{biblio}
\bibliographystyle{unsrt}

\end{document}